\titleformat{\chapter}[display]
  {\normalfont\LARGE\bfseries}
  {\titleline{}\vspace{5pt}\titleline{}\vspace{1pt}%
  \MakeUppercase{\chaptertitlename} \thechapter}
  {1pc}
  {\titleline{}\vspace{0.5pc}} 
\DeclarePairedDelimiter\abs{\lvert}{\rvert}
\renewcommand\section{\@startsection {section}{1}{\z@}%
                               {-3.5ex \@plus -1ex \@minus -.2ex}%
                               {2.3ex \@plus.2ex}%
                               {\normalfont\large\bfseries}}
\renewcommand\subsection{\@startsection{subsection}{2}{\z@}%
                                 {-3.25ex\@plus -1ex \@minus -.2ex}%
                                 {1.5ex \@plus .2ex}%
                                 {\normalfont\bfseries}}
\newtheorem{theorem}{Theorem}[section]
\newtheorem{prop}[theorem]{Proposition}
\newtheorem{corollary}[theorem]{Corollary}
\newtheorem{lemma}[theorem]{Lemma}
\newtheorem{remark}{Remark}[section]
\newtheorem{assumption}{Assumption}
\newtheorem{definition}[theorem]{Definition}
   \def\MR#1{}
\title{
\huge 
Regularity preservation in Kolmogorov equations for non-Lipschitz coefficients under Lyapunov conditions
}
\author{Martin Chak}
\date{\normalsize\today}
\begin{document}

\maketitle

\begin{abstract}
Given global Lipschitz continuity and differentiability of high enough order on the coefficients in It\^{o}'s equation, 
differentiability of associated semigroups, existence of twice differentiable solutions to Kolmogorov equations and weak convergence rates of numerical approximations are known. In this work and against the counterexamples of Hairer et al.~\cite{MR3305998}, the drift and diffusion coefficients having Lipschitz constants that are~$o(\log V)$ and~$o(\sqrt{\log V})$ respectively for a function~$V$ satisfying~$(\partial_t + L)V\leq CV$ 
is shown to be a generalizing condition in place of global Lipschitz continuity for the above.

\end{abstract}

\section{Introduction}

Let~$b:[0,\infty)\times\mathbb{R}^n\rightarrow\mathbb{R}^n$,~$\sigma:[0,\infty)\times\mathbb{R}^n\rightarrow\mathbb{R}^{n\times n}$ and let~$W_t$ be a standard Wiener process on~$\mathbb{R}^d$. Consider the stochastic differential equation (SDE) on $\mathbb{R}^n$ given by
\begin{equation}\label{sde0}
dX_t = b(t,X_t)dt + \sigma(t,X_t)dW_t.
\end{equation}
This paper concerns the case where the coefficients~$b$ and~$\sigma$ are not globally Lipschitz continuous in space, but are only locally Lipschitz. 
The main contributions in this work are the existence of twice differentiable-in-space solutions to the associated Kolmogorov equations \cite{MR1731794} 
and order one weak error estimates~\cite{MR1214374,MR1091544,https://doi.org/10.48550/arxiv.2112.15102} of suitable explicit numerical approximations to~\eqref{sde0}. 
These results are obtained by first proving moment bounds of derivatives of~$X_t$ with respect to initial condition. Subsequently, the estimates are used to validate an It\^{o}-Alekseev-Gr\"{o}bner formula~\cite{huddeiag} and differentiability of semigroups associated with~\eqref{sde0}, which are then used to prove the announced results. 
Similar moment bounds on the first and second derivative with respect to initial value in the non-globally Lipschitz case have recently appeared in \cite{hut1} under different assumptions. 
Related ideas for non-globally Lipschitz coefficients 
had appeared earlier in~\cite{https://doi.org/10.48550/arxiv.1309.5595,MR4079431,MR1305784} for obtaining local Lipschitz continuity in initial value, strong numerical convergence rates and strong \mbox{($p$-)completeness}. 

More specifically, in this paper we will show that the aforementioned results hold true under conditions where~$b,\sigma$ do not necessarily satisfy the globally monotonicity assumption~\cite[equation~(3)]{MR4079431}.
Our main assumptions are that higher derivatives of $b,\sigma$ are bounded by Lyapunov 
functions and loosely that~$b$ and~$\sigma$ admit local Lipschitz constants which are~$o(\log V)$ and~$o(\sqrt{\log V})$ respectively for a Lyapunov 
function $V$. 
The results are applicable to all of the example SDEs presented in~\cite[Section~4]{MR3766391}, with the exception of those in Section~4.7. In particular, for the first time, weak numerical convergence rates of order one are shown for 
these example SDEs. 
The convergence rates are obtained using the stopped increment-tamed Euler-Maruyama schemes of~\cite{MR3766391}. 

In contribution to regularity analysis of SDEs, the results provide new criteria for regularity of semigroups associated to solutions of~\eqref{sde0}. Previously, this regularity was known in cases of globally Lipschitz~\cite{MR1731794} or monotone coefficients~\cite{MR1840644}, or hypoellipticity~\cite[Proposition~4.18]{MR3305998}. On the other hand, Hairer et al.~\cite{MR3305998} presented remarkable counterexample SDEs, which do not have such regularity properties, even when the SDE has globally bounded smooth coefficients. 
More concretely, one counterexample is given by~\eqref{sde0} with
\begin{equation}\label{counter}
n=3,\quad b(t,x)=(\cos(x_3\cdot\exp(x_2^3)),0,0),\quad\sigma_{2,1} = \sqrt{2},\sigma_{i,j}=0\ \forall (i,j)\neq (2,1).
\end{equation}
Denoting~$X_t^x$ to be the unique (up to indistinguishability) solution to this SDE with~$X_0^x=x$, Theorem~3.1 in~\cite{MR3305998} asserts that there exists an infinitely differentiable and compactly supported~$\varphi:\mathbb{R}^3\rightarrow\mathbb{R}$ such that~$\mathbb{R}^3\ni x\mapsto\mathbb{E}[\varphi(X_t^x)]$ fails to even be locally H\"older continuous for any~$t>0$. The counterexamples stand in contrast to more classical results in the globally Lipschitz/monotone case as referenced above. Further counterexamples have also been recently established in~\cite{1531-3492_2021203}.

In the following Theorem~\ref{basicdiff}, 
we do not assume that the coefficients~$b$ and~$\sigma$ are globally bounded. 
Note however that, as announced, the coefficients are assumed to satisfy local Lipschitz bounds. 
Our basic result about semigroup differentiability can be summarized as in Theorem~\ref{basicdiff} below. 
\begin{theorem}\label{basicdiff}
Suppose there exists~$V:[0,T]\times\mathbb{R}^n\rightarrow \mathbb{R}$ twice continuously differentiable in space, continuously differentiable in time and constant~$C>0$ such that
\begin{equation}\label{basiclyacond}
\partial_tV(t,x) + \sum_{i=1}^n b_i(t,x)\partial_{x_i}V(t,x) + \frac{1}{2}\sum_{i,j = 1}^n (\sigma(t,x)\sigma(t,x)^{\top})_{ij}\partial_{x_i}\partial_{x_j} V(t,x) \leq CV(t,x)
\end{equation}
for all~$t\in[0,T]$,~$x\in\mathbb{R}^n$ and~$\lim_{\abs{x}\rightarrow\infty}V(t,x) = \infty$. Let~$f,c:[0,T]\times\mathbb{R}^n\rightarrow\mathbb{R}$,~$g:\mathbb{R}^n\rightarrow\mathbb{R}$ be measurable functions and~$p\in\mathbb{N}_0$. 
Suppose~$b(t,\cdot),\sigma(t,\cdot),f(t,\cdot),g,c(t,\cdot)\in C^p$. Moreover, suppose
\begin{itemize}
\item there exists measurable~$G:[0,T]\times\mathbb{R}^n\rightarrow\mathbb{R}$ such that~$G(t,\cdot):\mathbb{R}^d\rightarrow\mathbb{R}$ is continuous for any~$t$,~$G(t,\cdot) = o(\log V(t,\cdot))$ uniformly in~$t$ and such that
\begin{subequations}\label{basiclip}
\begin{align}
\abs{b(t,x) - b(t,y)} \leq (G(t,x) + G(t,y))\abs{x-y},\label{basiclip1}\\
\|\sigma(t,x) - \sigma(t,y)\|^2 \leq (G(t,x) + G(t,y))\abs{x-y}^2\label{basiclip2}
\end{align}
\end{subequations}
for all~$t\in[0,T]$,~$x\in\mathbb{R}^n$,
\item for every~$k>0$,~$h\in\{b,f,g,c\}$, there exists~$C'>0$ such that
\begin{equation}\label{basichda}
\abs*{\partial^{\alpha}h(t,\lambda x+(1-\lambda)y)} + \|\partial^{\beta}\sigma(t,\lambda x + (1-\lambda)y)\|^2  \leq C'(1+ V(t,x) + V(t,y))^{\frac{1}{k}}
\end{equation}
for all~$t\in[0,T]$,~$x,y\in\mathbb{R}^n$,~$\lambda\in[0,1]$ and multiindices~$\alpha,\beta$ with~$p_0\leq \abs{\alpha}\leq p$,~$2\leq \abs{\beta}\leq p$, where~$p_0 = 2$ if~$h = b$ and~$p_0 = 0$ otherwise.
\end{itemize}
For any~$s\in[0,T]$ and stopping time~$\tau\leq T-s$, the expectation of~$u(s,\tau,\cdot):\mathbb{R}^n\rightarrow\mathbb{R}$ given by
\begin{align}
&\mathbb{E}[u(s,\tau,x)]\nonumber\\
&\quad = \mathbb{E}\bigg[\int_0^{\tau} f(s+r,X_r^{s,x}) e^{-\int_0^r c(s+w,X_w^{s,x}) dw} dr + g(X_{\tau}^{s,x})e^{-\int_0^{\tau} c(s+w,X_w^{s,x})dw}\bigg],\label{basicu}
\end{align}
is continuously differentiable in~$x$ up to order~$p$, where for any~$s\in[0,T]$,~$x\in\mathbb{R}^n$,~$X_{\cdot}^{s,x}$ is the solution to~$X_t^{s,x} = x + \int_0^t b(s+r,X_v^{s,x}) dr + \int_0^t \sigma(s+r,X_r^{s,x})dW_r$ on~$t\in[0,T-s]$. Moreover, if~$p\geq 2$, then the function given by~$v(t,x) = \mathbb{E}[u(t,T-t,x)]$ is locally Lipschitz in~$t$ 
and satisfies
\begin{equation}\label{asfkeq0}
\partial_t v + a:D^2v + b\cdot \nabla v - cv + f = 0
\end{equation}
a.e. on $[0,T]\times\mathbb{R}^n$, where~$a=\frac{1}{2}\sigma\sigma^{\top}$,~$D^2$ denotes the Hessian matrix and~$a:D^2 = \textrm{Tr}(aD^2)$. If in addition~$b,\sigma$ are independent of~$t$ and~$f,c$ are jointly continuous in~$t,x$, then~$v$ is continuously differentiable in~$t$ and satisfies~\eqref{asfkeq0} on all of~$[0,T]\times\mathbb{R}^n$.
\end{theorem}


Theorem~\ref{basicdiff} follows as corollary to Theorems~\ref{fk2},~\ref{fk} and~\ref{classical}. 
For simplicity, the Lyapunov function~$V$ in Theorem~\ref{basicdiff} has been made independent of~$k$ appearing in~\eqref{basichda}. In the more detailed Theorems~\ref{fk2},~\ref{fk} and~\ref{classical}, this assumption is relaxed so that~$V$ may depend on~$k$. In particular, this allows us to easily determine that indeed we have a generalization of the globally Lipschitz case as in~\cite[Section~5.3]{MR1731794}: for example if~$n=1$ (higher dimensions following similarly), take~$V=V_k(x) = x^{2km}$ for some large enough~$m\in\mathbb{N}$. Since any globally Lipschitz~$b,\sigma$ are at most linearly growing, the Lyapunov property is readily verified. The rest of the conditions are then not stronger than those in~\cite{MR1731794}. A discussion of Theorem~\ref{basicdiff} with regard to the results of~\cite{MR3305998} is given in Section~\ref{introsub}. 

In addition, the case that~$b,\sigma$ are assumed to be time homogeneous with locally Lipschitz derivatives up to order~$p$ satisfying
\begin{align*}
\sum_i\abs{\partial_i b(x)} + \|\partial_i \sigma(x) \|^2 &\leq G(x),\\
\abs{\partial^{\alpha}h(x)} + \|\partial^{\alpha}\sigma (x)\|^2 &\leq C'(1+V(t,x))^{\frac{1}{k}}
\end{align*}
in place of~\eqref{basiclip1},~\eqref{basiclip2} and~\eqref{basichda} is considered in Section~\ref{altassumps}. In particular, Theorems~\ref{alters2},~\ref{alters3},~\ref{alters4} show that the conclusions of Theorem~\ref{basicdiff} hold 
under this setting. 
These results appear to be the only ones in the literature about twice differentiable-in-space solutions to Kolmogorov equations outside hypoelliptic, elliptic diffusion or globally Lipschitz/monotone settings.

In the same vein as the counterexamples for regularity preservation, the authors of~\cite{MR3305998} present a counterexample SDE where the Euler-Maruyama approximation suffers from the lack of polynomial convergence rates. 
Namely, Theorem~5.1 in~\cite{MR3305998} shows that there exists a globally bounded smooth pair~$b,\sigma$ such that~$\lim_{\delta\rightarrow0^+}\abs{\mathbb{E}[X_t] - \mathbb{E}[Y_t^{\delta}]}/\delta^{\alpha} = \infty$ for any~$\alpha>0$, where~$X_t$ denotes the solution to~\eqref{sde0} with~$X_0=0$ and~$Y_t^{\delta}$ denotes its Euler-Maruyama approximation with stepsize~$\delta$. 
The next result provides general conditions where numerically weak convergence rates of order~$1$ may be established outside the classical globally Lipschitz~\cite{MR1214374} and monotone~\cite{https://doi.org/10.48550/arxiv.2112.15102,MR4689832} cases. 

\begin{theorem}\label{wbth}
Let all of the assumptions in Theorem~\ref{basicdiff} hold with~$p\geq 3$. Suppose~$b,\sigma$ are independent of~$t$ and suppose~$V$ is of the form~$V(t,x) = e^{U(x)e^{-\rho t}}$ for~$U\in C^3(\mathbb{R}^n,[0,\infty))$,~$\rho\geq 0$, such that there exist~$c\geq 1$ satisfying
\begin{equation*}
\abs{x}^{\frac{1}{c}} + 
\abs{\partial^{\alpha}b(x)}^{\frac{1}{c}} + \|\partial^{\alpha}\sigma(x)\|^{\frac{1}{c}} + \abs{\partial^{\beta}U(x)} \leq c(1+U(x))^{1 - \frac{1}{c}},
\end{equation*}
for all~$x\in\mathbb{R}^n$, multiindices~$\alpha,\beta$ with~$0\leq \abs{\alpha}\leq 2$ and~$1\leq \abs{\beta}\leq 3$. If~$h\in C^3(\mathbb{R}^n,\mathbb{R})$ is such that
\begin{equation*}
\abs{\partial^{\alpha}h(x)} \leq c(1+\abs{x}^c)
\end{equation*}
for all~$x\in\mathbb{R}^n$ and multiindices~$\alpha$ with~$0\leq \abs{\alpha}\leq 3$, then there exists a constant~$C>0$ such that
\begin{equation}\label{weaklhs}
\abs{\mathbb{E}[h(X_T)] - \mathbb{E}[h(Y_T^{\delta})]}\leq C \delta,
\end{equation}
for all~$0<\delta<1$, where~$Y_{\cdot}^{\delta}:[0,T]\rightarrow\mathbb{R}^n$ is the approximation given by~$Y_0^{\delta} = X_0\in\mathbb{R}^n$ and
\begin{equation}\label{btapprox}
Y_t^{\delta} = Y_{k\delta}^{\delta} + \mathds{1}_{\{ \abs{Y_{k\delta}^{\delta}}\leq \exp(\abs{\log \delta}^{\frac{1}{2}}) \}} \bigg( \frac{b(Y_{k\delta}^{\delta})(t-k\delta) + \sigma(Y_{k\delta}^{\delta})(W_t-W_{k\delta})}{1+\abs{b(Y_{k\delta}^{\delta})(t-k\delta) + \sigma(Y_{k\delta}^{\delta})(W_t-W_{k\delta})}^3} \bigg)
\end{equation}
for all~$t\in[k\delta,(k+1)\delta]$,~$k\in\mathbb{N}_0\cap[0,\frac{T}{\delta})$.
\end{theorem}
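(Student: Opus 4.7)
The plan is to follow the Talay--Tubaro weak-error analysis routed through the Kolmogorov equation of Theorem~\ref{fk}, the It\^o--Alekseev--Gr\"obner formula of \cite{huddeiag}, and the stability analysis of stopped increment-tamed schemes from \cite{MR3766391}. Since the hypotheses of Theorem~\ref{basicdiff} hold with $p\geq 3$, applying Theorem~\ref{fk} with $c=f=0$ and $g=h$ first yields $v(t,x):=\mathbb E[h(X_{T-t}^{0,x})]$ that is $C^3$ in $x$, locally Lipschitz in $t$, and satisfies the backward Kolmogorov PDE $\partial_t v + Lv = 0$ almost everywhere on $(0,T)\times\mathbb R^n$ with terminal data $h$. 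The moment estimates on $\partial^\alpha X_\cdot^{s,x}$ underlying that result, together with the polynomial-in-$U$ control $\abs{\partial^\alpha h(x)}\leq c(1+\abs x^c)$ on the observable, deliver the polynomial-in-$V$ derivative bounds $\abs{\partial^\alpha v(t,x)}\leq C_q(1+V(t,x))^{1/q}$ for every $q\in\mathbb N$ and $\abs\alpha\leq 3$ needed in the weak expansion.

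Next I would establish uniform-in-$\delta$ moment bounds and tail controls for the approximation, namely
\begin{equation*}
\sup_{\delta\in(0,1)}\max_{k\delta\leq T}\mathbb E\big[(1+V(k\delta,Y_{k\delta}^\delta))^q\big] < \infty \quad \text{and}\quad \mathbb P\Big(\max_{k\delta\leq T}\abs{Y_{k\delta}^\delta} > e^{\abs{\log\delta}^{1/2}}\Big) = O(\delta^r)
\end{equation*}
for every $q\geq 1$ and $r>0$; such estimates follow the Lyapunov stability arguments in \cite{MR3766391}, where the taming denominator caps one-step increments at $O(1)$ and the stopping indicator freezes the chain outside the allowed region. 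Using these bounds together with the telescoping identity $\mathbb E[h(Y_T^\delta)]-\mathbb E[h(X_T)] = \sum_{k=0}^{\lfloor T/\delta\rfloor-1}\mathbb E[v((k{+}1)\delta,Y_{(k+1)\delta}^\delta)-v(k\delta,Y_{k\delta}^\delta)]$, I would Taylor-expand each summand to third order in $x$ and first order in $t$ around $(k\delta,Y_{k\delta}^\delta)$. Substituting the explicit increment $\Delta_k:=b(Y_{k\delta}^\delta)\delta + \sigma(Y_{k\delta}^\delta)(W_{(k+1)\delta}-W_{k\delta})$, exploiting $\partial_t v + Lv = 0$ to cancel the $O(\delta)$ expectations, and using $\mathbb E[\Delta_k\Delta_k^\top\mid\mathcal F_{k\delta}] = (\sigma\sigma^\top)(Y_{k\delta}^\delta)\delta$, the per-step remainder decomposes into (a)~the standard third-order Taylor residual, of size $O(\delta^2)$ in expectation after exploiting the martingale structure of $\Delta_k$; (b)~the taming discrepancy $\Delta_k - \Delta_k/(1+\abs{\Delta_k}^3) = \Delta_k\abs{\Delta_k}^3/(1+\abs{\Delta_k}^3)$, of size $\abs{\Delta_k}^4$ on $\{\abs{\Delta_k}\leq 1\}$ and bounded outside, giving $O(\delta^2)$ in expectation because $\mathbb E[\abs{\Delta_k}^4] = O(\delta^2)$ and the Gaussian tails of the Brownian increment kill the complement; and (c)~the stopping contribution from $\{\abs{Y_{k\delta}^\delta}>e^{\abs{\log\delta}^{1/2}}\}$, bounded by $C\delta^r$ for any $r>0$ after pairing the tail estimate with the polynomial growth of $v$. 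Summing $T/\delta$ local errors each $O(\delta^2)$ yields~\eqref{weaklhs}.

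The principal obstacle is item~(b): because the Lipschitz constants of $b,\sigma$ are only $o(\log V)$ and $o(\sqrt{\log V})$, the coefficients may grow faster than polynomially in $\abs x$, so controlling $\mathbb E[\abs{\Delta_k}^4]$ requires careful weighing against $V$-moments rather than $\abs{Y_{k\delta}^\delta}$-moments. The cubic exponent in the taming denominator and the $\exp(\abs{\log\delta}^{1/2})$ stopping threshold in \eqref{btapprox} are calibrated precisely so that Cauchy--Schwarz between the Gaussian tails of $W_{(k+1)\delta}-W_{k\delta}$ and the $L^q$ norms of $(1+V(k\delta,Y_{k\delta}^\delta))^{1/q}$ still leaves a spare factor of $\delta$; a quadratic denominator would only give $O(\sqrt\delta)$ convergence, while a larger stopping threshold would spoil the tail estimates of Step~2. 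Using the It\^o--Alekseev--Gr\"obner identity in \cite{huddeiag} reduces (a)--(c) to bounds on $\mathbb E[\abs{Y_{(k+1)\delta}^\delta - X_{(k+1)\delta}^{k\delta,Y_{k\delta}^\delta}}^j]$ for $j\leq 3$ weighted by derivatives of $v$, but the taming and stopping analyses still drive the calculation.
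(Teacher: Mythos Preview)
Your primary route---the Talay--Tubaro telescoping via the Kolmogorov PDE---has a genuine gap that the paper explicitly identifies and avoids. Theorem~\ref{fk} yields $v\in C^3$ in $x$ and \emph{locally Lipschitz} in $t$, with $\partial_t v + Lv = 0$ holding only \emph{almost everywhere}; it does not give $v\in C^{1,2}$. Your Taylor expansion ``to first order in $t$ around $(k\delta,Y_{k\delta}^\delta)$'' and the cancellation via $\partial_t v + Lv = 0$ implicitly require either $\partial_t v$ to exist at those points or an It\^o formula for $t\mapsto v(t,Y_t^\delta)$, neither of which is available from local Lipschitz continuity alone. The paper flags this at the start of Section~\ref{weakrates}: the classical approach needs continuous differentiability of~\eqref{actualetstw} in $t$, which has \emph{not} been shown, and precisely for this reason the It\^o--Alekseev--Gr\"obner formula is used instead.

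The paper's proof of Theorem~\ref{weakconvlya} (to which Theorem~\ref{wbth} is corollary) therefore bypasses $v$ entirely. It works with the stochastic flow $\bar{X}_{s,t}^x$ and its spatial derivatives: the IAG identity~\eqref{weakexp} writes the weak error directly as a time integral of the drift and diffusion discrepancies $b(Y_t^\theta)-(\text{scheme drift})$ and $\sigma\sigma^\top(Y_t^\theta)-(\text{scheme diffusion})$ contracted against $\partial^\alpha\bar{X}_{t,T}^{Y_t^\theta}$ and $\partial^\alpha f(\bar{X}_{t,T}^{Y_t^\theta})$, with no appearance of $\partial_t v$. Applying IAG requires a prerequisite your sketch omits: strong completeness of the flow and its derivatives, i.e.\ a version of $((s,t),x)\mapsto\partial^\alpha\bar{X}_{s,t}^x$ that is jointly continuous, established in Lemma~\ref{weakconvlya0} via H\"older-in-$(s,x)$ moment estimates (Lemma~\ref{firstlembel}) and Corollary~3.10 of~\cite{https://doi.org/10.48550/arxiv.1309.5595}. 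From~\eqref{weakexp} the per-step analysis then runs much as you outline in (a)--(c): each factor $b(Y_t^\theta)-b(Y_{t_k}^\theta)$ is It\^o-expanded as in~\eqref{dertria1}, the taming correction is bounded by~\eqref{dertria2}, the $O(\sqrt{\delta})$ stochastic-integral contribution is killed by conditioning on $\mathcal F_{t_k}$ (the Markov step~\eqref{cttk}), and exponential integrability from~\cite{MR3766391} controls the rest. Your final sentence treats IAG as a reformulation device that still produces strong-error terms $\mathbb E\abs{Y-X}^j$ weighted by derivatives of $v$; in the paper it is the primary representation and $v$ never enters.
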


Theorem~\ref{wbth} is corollary to Theorem~\ref{weakconvlya}, for which the full setting is given by Assumption~\ref{A4lya}, with comments in Remark~\ref{before51}. 
The numerical scheme~\eqref{btapprox} is the stopped increment-tamed Euler-Maruyama approximation from~\cite{MR3766391}. It has the key property of retaining exponential integrability properties of the continuous time SDE, which is used throughout the proof for Theorem~\ref{weakconvlya}. As is well documented~\cite{MR2795791}, the classical Euler-Maruyama scheme may diverge in both the strong and weak sense for superlinearly growing, non-globally Lipschitz coefficients without this property. The power~$3$ appearing in the denominator on the right-hand side of~\eqref{btapprox} is chosen purposefully: weak convergence rates of order one are only obtained for exponents larger than or equal to~$3$. 
The proof of Theorem~\ref{weakconvlya} uses the recently established It\^{o}-Alekseev-Gr\"{o}bner formula~\cite{huddeiag} in order to expand the left-hand side of~\eqref{weaklhs}, instead of the classical approach using~$C^{1,2}$ solutions to Kolmogorov equations as in~\cite{MR1214374}. 
Note that as a result, the requirement~$p\geq 3$ in Theorem~\ref{wbth} is slightly weaker than the typical fourth order continuous differentiability of~$b,\sigma$. 
In order to apply the formula, strong completeness of some derivative processes of~\eqref{sde0} is established first by using a result in~\cite{https://doi.org/10.48550/arxiv.1309.5595}. Some closely related properties for~\eqref{sde0} appeared recently in~\cite{hut1}, where the authors use a different approach and different assumptions. Although weak convergence without rates has been established by way of convergence in probability in~\cite[Corollary~3.7]{MR3766391} and~\cite[Corollary~3.19]{MR3364862}, weak rates of convergence 
(of order one) have thus far been an open problem for general non-globally monotone coefficients, see however for example~\cite{MR4177372,MR2177799} in this direction. 
On the other hand, strong convergence rates of order~$\frac{1}{2}$ have been established in even the non-globally monotone case~\cite{MR4079431}. The assumptions of Theorem~\ref{wbth} (and of the more detailed Theorem~\ref{weakconvlya}) do not include the globally Lipschitz setting as in~\cite[Theorem~14.5.1]{MR1214374}. However, some weakening of these assumptions that both includes the globally Lipschitz setting and is sufficient for the conclusions of Theorem~\ref{weakconvlya} to hold is discussed in Remark~\ref{remprof}. 

The proofs for the moment estimates underlying both Theorems~\ref{basicdiff} and~\ref{wbth} use directly the results of~\cite{MR4260476}, for which exponential integrability in continuous time as in~\cite{https://doi.org/10.48550/arxiv.1309.5595,MR4079431} is an important property that is accounted for in a crucial way by our local Lipschitz condition. 
The core argument for these estimates, which can be thought of as a combination of the approach in~\cite{MR1731794} with ideas of~\cite{https://doi.org/10.48550/arxiv.1309.5595,MR4079431}, is to consider for any $\kappa\in\mathbb{R}^n$ 
processes $X_{t(\kappa)}$ satisfying
\begin{equation*}
\sup_{t\in[0,T]}\abs*{\frac{X_t^{x+r\kappa}-X_t^x}{r} - X_{t(\kappa)}^x} \rightarrow 0
\end{equation*}
in probability as $r\rightarrow 0$, where $X_t^x$ denotes a solution to~\eqref{sde0} with $X_0^x = x$. Such processes exist \cite[Theorem 4.10]{MR1731794} for $b$, $\sigma$ continuously differentiable in space satisfying some local integrability assumption and $X_{t(\kappa)}^x$ satisfies the system resulting from a formal differentiation of~\eqref{sde0} (see~\eqref{firsdd}). If~$b$ and~$\sigma$ are independent of~$t$ and the derivatives of~$b$ and~$\sigma$ are locally Lipschitz, the processes $X_{t(\kappa)}^x$ are almost surely continuous derivatives in the classical sense as in \cite[Theorem~V.39]{MR2273672}. Higher derivatives exist for $b$ and $\sigma$ with higher orders of differentiability. The SDEs solved by the first order derivatives turn out to be just as considered for previous applications of the stochastic Gr\"{o}nwall inequality~\cite{MR4260476}, whereas those for higher order derivatives have only the term involving the derivative of the highest order on the right-hand side of the dynamics requiring serious control. For the latter, the stochastic Gr\"{o}nwall inequality together with 
our Assumption~\ref{A1lya} below and an induction argument are sufficient to control all of the terms. 
We use~$o(\log V)$ and~$o(\sqrt{\log V})$ Lipschitz constants in order to control the moments for large time~$T$, but the results follow for~$O(\log V)$ and~$O(\sqrt{\log V})$ Lipschitz constants if~$T$ is suitably small. 
In order to establish solutions to the Kolmogorov equation, a number of intermediary results following the strategy of~\cite{MR1731794} are given for the present case of local Lipschitz constants. 
In particular, it is shown by extending an argument from~\cite{MR4074703} that an Euler-type approximation converges to solutions of the SDE in probability and locally uniformly in initial time and space, that is, the SDE is regular~\cite[Definition~2.1]{MR1731794}.

The original motivation for this work is the Poisson equation for finding the asymptotic variance of ergodic averages associated to SDEs. In \cite{optfric}, a formula for the derivative of this variance with respect to a parameter in the dynamics is derived. In order to do so, the Poisson equation is interpreted as a PDE in the classical sense, which in turn made use of an appropriate solution to the Kolmogorov equation. 
In a setting where the coefficients are not globally Lipschitz, for example if the friction in the Langevin equation of \cite{optfric} is not restricted to be constant in space, the existence of such a solution to the backward Kolmogorov equation appears to be unavailable in the literature. The present work fills this gap. In addition, solutions to the Poisson equation furnishes central limit theorems for additive functionals themselves by way of~\cite{MR3069369}. The results here allow some arguments there to be established rigorously for hypoelliptic diffusions, more details are given in Section~\ref{langwithvarlya}.

\subsection{Loss of regularity}\label{introsub}
To conclude the introduction, let us discuss Theorem~\ref{basicdiff} in the context of~\cite[Theorems~3.1,~4.16, Proposition~4.18]{MR3305998}.

Theorem~4.16 in~\cite{MR3305998} asserts the existence of unique viscosity solutions to Kolmogorov equations 
given existence of an associated Lyapunov function~$V$, that is,~$V$ satisfying~\eqref{basiclyacond}. In that statement, it is assumed that~$c=f=0$ and that~$b,\sigma$ are time-homogeneous. 
Otherwise, their assumptions are strictly weaker than those in Theorems~\ref{basicdiff} and~\ref{wbth}. 
This viscosity solution has the representation~$(t,x)\mapsto\mathbb{E}[g(X_{T-t}^x)]$, where~$X_t^x$ denotes the solution to~\eqref{sde0} with~$X_0^x=x$, 
but it is in general not differentiable in contrast to in Theorem~\ref{basicdiff}. However, given enough regularity, it 
is an almost everywhere solution. 
In particular, this is the case if it belongs to the Sobolev space~$W_{\textrm{loc}}^{2,1,p}$ for some~$p>n+1$, see~\cite[Proposition~I.4, Remark~I.16]{MR709162}. 
Under the stronger assumptions here, our results on~\eqref{basicu} and its a.e. derivatives as implied by Theorem~\ref{fk2} verifies that this viscosity solution indeed belongs to~$W_{\textrm{loc}}^{2,1,p}$. 
These arguments form an alternate proof for the assertion about a.e. solutions to~\eqref{asfkeq0} in Theorem~\ref{basicdiff} in the case where~$f=c=0$ and~$b,\sigma$ are time-homogeneous.

In Proposition~4.18 in~\cite{MR3305998}, again in the setting where~$f=c=0$ and~$b,\sigma$ are time-homogeneous, the authors make use of Lemma~5.12 in~\cite{MR1731794} to obtain distributional solutions to the Kolmogorov equation~\eqref{asfkeq0} in the case of smooth coefficients. 
If in addition~$b,\sigma$ satisfy H\"{o}rmander's condition, their result implies for continuous bounded~$g$ that~$(t,x)\mapsto\mathbb{E}[g(X_{T-t}^x)]$ is a classical solution to the Kolmogorov equation.
In particular, there is a preservation, or even a gain, of regularity in the semigroup. H\"{o}rmander's condition appears to be neither strictly stronger nor strictly weaker than the main assumptions in the present work. For example in Section~\ref{stochdufflya}, we consider~\eqref{sde0} with~$n=2$,~$b(t,x)=(x_2,\alpha_1 x_1-\alpha_2 x_2 - \alpha_3 x_2x_1^2 - x_1^3)$ and~$(\sigma(t,x))_{1,1}=(\sigma(t,x))_{1,2}=0$,~$(\sigma(t,x))_{2,1} = \beta_1x_1$,~$(\sigma(t,x))_{2,2}=\beta_3$ for some constants~$\alpha_1,\alpha_2,\beta_1,\beta_3\in\mathbb{R}$,~$\alpha_3>0$. This SDE does not satisfy H\"{o}rmander's condition when~$\beta_3=0$ (which is the case studied in~\cite{MR1430980}). However, as demonstrated in Section~\ref{stochdufflya}, it does satisfy the main assumptions in the present work. On the other hand, for example in the case where~$\alpha_3=0$ and~$\beta_3\neq 0$, H\"{o}rmander's condition is satisfied, but it is not clear whether there exists a satisfactory Lyapunov function.

Lastly, Theorem~3.1 in~\cite{MR3305998} presents an instance of~\eqref{sde0} such that there exists smooth compactly supported~$\varphi$ satisfying that for any~$t\in(0,T]$, the function~$x\mapsto\mathbb{E}[\varphi(X_t^x)]$ is not locally H\"{o}lder continuous. In combination with Theorem~\ref{basicdiff}, this implies that, for~$b,\sigma$ as in~\cite[equation~(3.1)]{MR3305998}, it is impossible to find a Lyapunov function~$V$ such that~$b,\sigma$ have Lipschitz constants that are~$o(\log V)$ and~$o(\sqrt{\log V})$ respectively.

The paper is organised as follows. In Section~\ref{notandpremlya}, the setting, notation and various definitions 
are given. In Section~\ref{momentestimates}, moment estimates of the supremum over time on the derivative process and the difference processes in initial value are given. These results are used throughout for proving the other results in the paper. In Section~\ref{kollya}, results on the regularity of the semigroup associated to~\eqref{sde0} are presented, which are followed by results about twice differentiable-in-space solutions to the Kolmogorov equation. Section~\ref{weakrates} contains the results about weak convergence rates for the stopped increment-tamed Euler-Maruyama scheme on SDEs with non-globally monotone coefficients. In Section~\ref{lyaexamples}, new Lyapunov functions are given for the Langevin equation with variable friction and stochastic Duffing-van der Pol equation. In the case of the former, the associated Poisson equation is discussed.

\section{Notation and preliminaries}\label{notandpremlya}
Let $(\Omega,\mathcal{F},\mathbb{P})$ be a complete probability space,~$\mathcal{F}_t$, $t\in [0,\infty)$, be a filtration satisfying the usual conditions (see e.g.~\cite[p.3]{MR2273672}) and~$(W_t)_{t\geq 0}$ be a standard Wiener process on~$\mathbb{R}^n$ with respect to~$\mathcal{F}_t$,~$t\in[0,\infty)$. Unless otherwise stated, let~$T\in(0,\infty)$. Let~$\abs{v},\|M\|$ denote the Euclidean norm of a vector~$v$ and the Frobenius norm of a matrix~$M$ respectively. Let~$b:\Omega\times[0,\infty)\times\mathbb{R}^n\rightarrow\mathbb{R}^n$, $\sigma:\Omega\times[0,\infty)\times\mathbb{R}^n\rightarrow\mathbb{R}^{n\times n}$ be functions such 
that~$b(t,\cdot),\sigma(t,\cdot)$ are continuous for every\footnote{The requirement that the properties hold for every~$\omega\in\Omega$, which will appear throughout the paper, is consistent with the assumptions in~\cite{MR1731794}, so that we may reference results directly from~\cite{MR1731794}. As is common practice, we omit in the notation the dependence on~$\omega$ for functions of~$\omega$.}~$t$,~$\omega$,~$b(\cdot,x),\sigma(\cdot,x)$ are~$\mathcal{F}\otimes \mathcal{B}([0,\infty))$-measurable for every~$x$,~$b(t,x),\sigma(t,x)$ are~$\mathcal{F}_t$-measurable for every~$t,x$ and~$\int_0^T\sup_{\abs*{x}\leq R}(\abs*{b(t,x)} + \|\sigma(t,x)\|^2)dt < \infty$ for any~$R>0$,~$\omega\in\Omega$. Let~$O\subseteq\mathbb{R}^n$ be an open set and for any~$x\in O $,~$s\geq0$, let~$X_t^{s,x}$ be an~$\mathcal{F}_t$-adapted~$O$-valued process 
such that~$X_t^{s,x}$ is~$\mathbb{P}$-a.s. continuous satisfying 
for all~$t\in [0,T]$ that
\begin{equation}\label{sde}
X_t^{s,x} = x  +\int_0^t b(s+r,X_r^{s,x})dr + \int_0^t\sigma(s+r,X_r^{s,x})dW_r.
\end{equation}
Note for spatially locally Lipschitz~$b,\sigma$, the existence of a Lyapunov function (that is, a function satisfying~\eqref{basiclyacond}) that grows to infinity at infinity suffices for the existence of the processes~$X_t^{s,x}$. More precisely, for example for the SDEs in Section~\ref{lyaexamples}, Theorem~3.5 in~\cite{MR2894052} proves that the processes~$X_t^{s,x}$ exist. See also Theorem~1.2 in~\cite{MR1731794} for general conditions on~$b,\sigma$ for the existence of~$X_t^{s,x}$.
When the initial value~$x$ and time~$s$ are not important or are obvious from the context, simply $X_t$ and similarly~$X_t^x$ 
is written. 
For $f\in C^2( O)$ and for either~$b$,~$\sigma$ as above or~$(b_{\cdot}^x:\Omega\times[0,T]\rightarrow\mathbb{R}^n)_{x\in O}$,~$(\sigma_{\cdot}^x:\Omega\times[0,T] \rightarrow\mathbb{R}^{n\times n})_{x\in O}$ that are, for each~$x$,~$\mathcal{F}\otimes\mathcal{B}([0,t])$-measurable and~$\mathcal{F}_t$-adapted satisfying~$\mathbb{P}$-a.s. that~$\int_0^T(\abs{b_s^x} + \|\sigma_s^x\|^2)ds < \infty$, we denote
\begin{equation}\label{jen}
Lf = b\cdot \nabla f + a : D^2 f,
\end{equation}
where $a = \frac{1}{2}\sigma\sigma^\top$, $D^2$ denotes the Hessian and for matrices $M,N$, $M:N = \sum_{i,j}M_{ij}N_{ij}$. 
Throughout, $\hat{O}$ is used to denote the convex hull of $O$, 
$C_c^\infty((0,T)\times\mathbb{R}^n)$ denotes the set of compactly supported infinitely differentiable functions on~$(0,T)\times\mathbb{R}^n$,~$C_b(\mathbb{R}^n)$ denotes the set of bounded continuous function on~$\mathbb{R}^n$,~$C^{1,2}([0,T]\times\mathbb{R}^n)$ denotes the set of continuous functions of the form $[0,T]\times\mathbb{R}^n\ni (t,x) \mapsto f(t,x)$ that are once continuously differentiable in $t$ and twice so in $x$, $B_R(x)$ denotes the closed ball of radius $R>0$ around $x\in\mathbb{R}^n$, $B_R = B_R(0)$,~$e_i$ denotes the~$i^{\textrm{th}}$ Euclidean basis vector in~$\mathbb{R}^n$, 
and~$C>0$ denotes a generic constant that may change from line to line. 
The expression~$\mathds{1}_A$ denotes the indicator function on the set~$A$. We denote~$\Delta_T = \{(s,t):0\leq s\leq t\leq T\}$. 
The notation~$\partial_i Z_{t,T}^z = \partial_{z_i}Z_{t,T}^{\cdot}\vert_z$ 
is used and similarly for the higher order derivatives~$\partial^{\alpha} Z_{t,T}^z$ 
for multiindices~$\alpha$. Moreover, for a multiindex~$\alpha$, 
we denote~$\abs{\alpha}=\sum_i\alpha_i$ and
\begin{equation*}
\kappa_{\alpha} = (\overbrace{e_1,\dots,e_1}^{\alpha_1\textrm{ times}},e_2,\dots).
\end{equation*}
\begin{definition}\label{rlyadef}
A positive random function $V:\Omega\times [0,T]\times O\rightarrow (0,\infty)$ is referred to as a $(\tilde{b}_{\cdot}^{\cdot},\tilde{\sigma}_{\cdot}^{\cdot},\alpha_{\cdot},\beta_{\cdot},p^*, V_0)$-Lyapunov function 
if~$(\tilde{b}_{\cdot}^y:\Omega\times[0,T] \rightarrow\mathbb{R}^n)_{y\in O}$,~$(\tilde{\sigma}_{\cdot}^y:\Omega\times[0,T] \rightarrow\mathbb{R}^{n\times n})_{y\in O}$,~$\alpha_{\cdot},\beta_{\cdot}:\Omega\times [0,T]\rightarrow [0,\infty]$,
~$p^*\in [1,\infty)$ and~$V_0\in C^{1,2}([0,T]\times O)$ are~$\mathcal{F}\otimes\mathcal{B}([0,T])$-measurable and~$\mathcal{F}_t$-adapted processes where applicable and satisfy for all~$y\in O$ that there exist a~$\mathcal{F}\otimes\mathcal{B}([0,T])$-measurable,~$\mathcal{F}_t$-adapted process~$Y_{\cdot}^y:\Omega\times[0,T]\rightarrow O$ such that it is~$\mathbb{P}$-a.s. continuous, it holds~$\mathbb{P}$-a.s. that~$V(t,y) = V_0(t,Y_t^y)$ for all~$t\in[0,T]$ and for any stopping time~$\tau\leq T$, it holds~$\mathbb{P}$-a.s. that
\begin{align}
&\int_0^T(\abs{\tilde{b}_r^y} + \|\tilde{\sigma}_r^y\|^2 + \abs{\alpha_r})dr < \infty,\label{dummypre}\\
&Y_s^y = y + \int_0^s \tilde{b}_r^y dr + \int_0^s \tilde{\sigma}_r^y dW_r,\label{dummysde}\\
&(\partial_t + L)V_0(s,Y_s^y) + \frac{p^*-1}{2}\frac{\abs{(\tilde{\sigma}_s^y)^{\top} \nabla V_0(s,Y_s^y)}^2}{V_0(s,Y_s^y)} \leq \alpha_s V_0(s,Y_t^y) + \beta_s\label{lyapr2}
\end{align}
for 
all $s\in[0,T]$, 
where~$L$ is given by~\eqref{jen} with~$\tilde{b},\tilde{\sigma}$ replacing~$b,\sigma$.
\end{definition}
\begin{definition}\label{lyafunctions}
For~$\bar{T}\in(0,\infty)$,~$\tilde{n}\in\mathbb{N}$ and open~$\tilde{O}\subseteq\mathbb{R}^{\tilde{n}}$, a function $V:\Omega\times[0,\bar{T}]\times\tilde{O}\rightarrow(0,\infty)$ is referred to as a Lyapunov function if there exist a filtration and Wiener process as above, $p^*\in[1,\infty)$,~$\tilde{b}_{\cdot}^{\cdot}:\Omega\times[0,\bar{T}]\times \tilde{O}\rightarrow\mathbb{R}^{\tilde{n}}$, $\tilde{\sigma}_{\cdot}^{\cdot}:\Omega\times[0,\bar{T}]\times \tilde{O}\rightarrow\mathbb{R}^{\tilde{n}\times \tilde{n}}$,~$V_0\in C^{1,2}([0,\bar{T}]\times \tilde{O})$, 
along with some~$\alpha_{\cdot}$ and~$\beta_{\cdot}$ such that~$V$ is a~$(\tilde{b}_{\cdot}^{\cdot},\tilde{\sigma}_{\cdot}^{\cdot},\alpha_{\cdot}, \beta_{\cdot},p^*, V_0)$-Lyapunov function and 
\begin{equation}\label{lyap3}
\left\| e^{\int_0^{\bar{T}} \abs{\alpha_u} du} \right\|_{L^{\frac{p^*}{p^*-1}}(\mathbb{P})} dt + \int_0^{\bar{T}} \left\|\frac{\beta_v}{e^{\int_0^v \alpha_u du }}\right\|_{L^{p^*}(\mathbb{P})} dv dt < \infty.
\end{equation}
\end{definition}
\begin{remark}
\begin{enumerate}[label=(\roman*)]
\item Smooth functions~$V_0$ satisfying~$(\partial_t + L)V_0\leq CV_0$ for some constant~$C$ as in \cite[Theorem~3.5]{MR2894052} are Lyapunov functions with~$p^*=1$,~$\alpha_t = C$ and~$\beta_t=0$. 
In this case, note that~\eqref{dummysde} holds~$\mathbb{P}$-a.s. with~$Y_{\cdot}^{\cdot},\tilde{b}_{\cdot}^{\cdot},\tilde{\sigma}_{\cdot}^{\cdot}$ replaced for example by~$X_{\cdot}^{0,\cdot},b(\cdot,X_{\cdot}^{0,\cdot}),\sigma(\cdot,X_{\cdot}^{0,\cdot})$ respectively, which follows by Lemma~4.51 in~\cite{MR2378491}, our assumptions on~$\sigma$ and the fact that~$(\omega,t)\mapsto(\omega,t,X_t^{0,x}(\omega))$ is~$\mathcal{F}\otimes\mathcal{B}([0,T])$-measurable and~$\mathcal{F}_t$-adapted (with~$(\mathcal{F}\otimes\mathcal{B}([0,T]))\otimes \mathcal{B}(O)$,~$\mathcal{F}_t\otimes\mathcal{B}(O)$ as~$\sigma$-algebras in the respective ranges). 
\item To summarize loosely, Lyapunov functions as defined above satisfy firstly the main condition~(15) in~\cite{MR4260476} for the stochastic Gr\"{o}nwall inequality and secondly finiteness conditions on the associated processes. These are properties that will be used many times throughout the paper in the form of Proposition~\ref{huddethm} and its corollaries below. 
\end{enumerate}
\end{remark}
The following property allows control across families of Lyapunov functions.
\begin{definition}\label{locdef}
Let~$\bar{T}\in(0,\infty)$,~$\tilde{n}\in\mathbb{N}$,~$(\tilde{n}_s)_{s\in[0,T]}$,~$\tilde{O}$,~$(\tilde{O}_s)_{s\in[0,T]}$,~$V_0\in C^{1,2}([0,\infty)\times\tilde{O})$ be such that~$\tilde{O}\subseteq\mathbb{R}^{\tilde{n}}$ and~$\tilde{O}_s\subseteq\mathbb{R}^{\tilde{n}_s}$ are all open. A family of functions~$(\hat{W}_s:\Omega\times[0,\bar{T}]\times\tilde{O}_s\rightarrow(0,\infty))_{s\in[0,T]}$ is~$(\tilde{n},\tilde{O},V_0)$-local in~$s$ if~$\tilde{O}=\tilde{O}_s$ and there exists a constant~$C>0$ satisfying that for any~$s\in[0,T]$, there exist~$\tilde{b}^{s,T}$,~$\tilde{\sigma}^{s,T},\alpha_{\cdot}^{s,T}$,~$\beta_{\cdot}^{s,T}$,~$p^{s,T}$ such that~$\hat{W}_s$ is a~$(\tilde{b}^{s,T},\tilde{\sigma}^{s,T},\alpha_{\cdot}^{s,T},\beta_{\cdot}^{s,T}, p^{s,T}, V_0(s+\cdot,\cdot)\vert_{[0,\bar{T}]\times\tilde{O}})$-Lyapunov function 
and the corresponding bound~\eqref{lyap3} holds uniformly with bound~$C$, that is,
\begin{equation}\label{localseq}
\Big\| e^{\int_0^{\bar{T}}\abs*{\alpha_u^{s,T}} du} \Big\|_{L^{\frac{p^{s,T}}{p^{s,T}-1}}(\mathbb{P})} + \int_0^{\bar{T}} \bigg\|\frac{\beta_v^{s,T}}{e^{\int_0^v \alpha_u^{s,T} du}}\bigg\|_{L^{p^{s,T}}(\mathbb{P})} dv < C.
\end{equation}
We say that~$(W_s)_{s\in[0,T]}$ is local in~$s$ if there exist~$\tilde{n},\tilde{O},V_0$ such that~$(W_s)_{s\in[0,T]}$ is~$(\tilde{n},\tilde{O},V_0)$-local in~$s$.
\end{definition}
A family of Lyapunov functions being local in~$s$ allows terms of the form~$\mathbb{E}[W_s(t,X_t^{s,x})]$ to be bounded uniformly in~$s$ after applying the stochastic Gr\"{o}nwall inequality (stated as Proposition~\ref{huddethm} below).
This is an important property for twice differentiable solutions to Kolmogorov equations, since such solutions and many lemmatic terms depend on a time variable via the starting times~$s$. On the other hand, such a property is in all of the examples mentioned here easily satisfied.

In the rest of the section, some results from~\cite{MR4260476,huddeiag} 
are recalled for the convenience of the reader. With the exception of Corollary~\ref{huddecor0}, we refer to the corresponding statements in~\cite{MR4260476,huddeiag} for their proofs. 
The next Proposition~\ref{huddethm} is a special case of Theorem~2.4 in~\cite{MR4260476}.

\begin{prop}\label{huddethm}
Let~$\tau\leq T$ be a stopping time,~$p^*\in[1,\infty)$ and~$V_0\in C^{1,2}([0,T]\times O,[0,\infty))$. 
Moreover, let~$\hat{X}:\Omega\times[0,T]\rightarrow O$,~$\hat{b}:\Omega\times[0,T]\rightarrow \mathbb{R}^{n}$,~$\hat{\sigma}:\Omega\times[0,T]\rightarrow \mathbb{R}^{n\times n}$,~$\hat{\alpha}:\Omega\times[0,T]\rightarrow\mathbb{R}\cup\{-\infty,\infty\}$,~$\hat{\beta}:\Omega\times[0,T]\rightarrow\mathbb{R}\cup\{-\infty,\infty\}$ be~$\mathcal{F}\otimes\mathcal{B}([0,T])$-measurable and~$\mathcal{F}_t$-adapted processes such that~$\hat{X}$ has continuous sample paths and it holds~$\mathbb{P}$-a.s. that~$\int_0^{\tau}(\abs{\hat{b}_s}+\|\hat{\sigma}_s\|^2 + \abs{\hat{\alpha}_s})ds<\infty$,~$\hat{X}_{t\wedge\tau}=\hat{X}_0+\int_0^t\mathds{1}_{[0,\tau)}(s)\hat{b}_sds + \int_0^t\mathds{1}_{[0,\tau)}(s)\hat{\sigma}_sdW_s$ for all~$t\in[0,T]$ and it holds~$\mathbb{P}$-a.s. that for a.a.~$s\in[0,\tau)$,~\eqref{lyapr2} holds with~$LV_0= \hat{b}_{\cdot}\cdot\nabla V_0 + \frac{1}{2}(\hat{\sigma}_{\cdot}\hat{\sigma}_{\cdot}^{\top}):D^2V_0$ and~$Y_s^y,\tilde{\sigma}_s^y,\alpha_s,\beta_s$ replaced by~$\hat{X}_s,\hat{\sigma}_s,\hat{\alpha}_s,\hat{\beta}_s$ respectively. The following statements hold.
\begin{enumerate}[label=(\roman*)]
\item For~$q_1,q_2\in(0,\infty]$ satisfying~$\frac{1}{q_1} = \frac{1}{q_2} + \frac{1}{p^*}$, it holds that
\begin{align}
(\mathbb{E}[(V_0(\tau,\hat{X}_{\tau}))^{q_1}])^{\frac{1}{q_1}} &\leq \bigg(\mathbb{E}\bigg[\exp\bigg(q_2\int_0^{\tau}\hat{\alpha}_sds\bigg)\bigg]\bigg)^{\frac{1}{q_2}}\bigg((\mathbb{E}[(V_0(0,\hat{X}_0))^{p^*}])^{\frac{1}{p^*}}\nonumber\\
&\quad+ \int_0^T\bigg(\mathbb{E}\bigg[\bigg(\frac{\mathds{1}_{[0,\tau)}(s)\hat{\beta}_s}{\exp(\int_0^s\hat{\alpha}_rdr)}\bigg)^{p^*}\bigg]\bigg)^{\frac{1}{p^*}}ds\bigg).\label{hueq1}
\end{align}
\item\label{hp2} For~$q_1,q_2,q_3\in(0,\infty]$ satisfying~$q_3<p^*$ and~$\frac{1}{q_1}=\frac{1}{q_2} + \frac{1}{q_3}$, there exists a constant~$C>0$ depending only on~$q_3,p^*$ such that
\begin{align*}
&\bigg(\mathbb{E}\bigg[\bigg(\sup_{s\in[0,\tau]}V_0(s,\hat{X}_s)\bigg)^{q_1}\bigg]\bigg)^{\frac{1}{q_1}} \\
&\quad\leq 
C\bigg(\mathbb{E}\bigg[\exp\bigg(\sup_{t\in[0,\tau]}q_2\int_0^t\hat{\alpha}_sds\bigg)\bigg]\bigg)^{\frac{1}{q_2}}\cdot\bigg(\mathbb{E}\bigg[\bigg(V_0(0,\hat{X}_0)\\
&\qquad+\int_0^{\tau}\frac{\hat{\beta}_s}{\exp(\int_0^s\hat{\alpha}_rdr)}ds\bigg)^{q_3}\bigg]\bigg)^{\frac{1}{q_3}}.
\end{align*}
\end{enumerate}
\end{prop}
An application of Proposition~\ref{huddethm} on Lyapunov functions as defined above is given by the next Corollary~\ref{huddecor0}.
\begin{corollary}\label{huddecor0}
Let~$\tilde{n}\in\mathbb{N}$,~$\tilde{O}\in\mathbb{R}^{\tilde{n}}$,~$V:\Omega\times[0,T]\times\tilde{O}\rightarrow (0,\infty)$ be a~$(\tilde{b}_{\cdot}^{\cdot},\tilde{\sigma}_{\cdot}^{\cdot},\alpha_{\cdot},\beta_{\cdot},p^*,V_0)$-Lyapunov function for some~$\tilde{b}_{\cdot}^{\cdot},\tilde{\sigma}_{\cdot}^{\cdot},\alpha_{\cdot},\beta_{\cdot},p^*,V_0$. For any~$q_1,q_2\in(0,\infty]$ with~$\frac{1}{q_1} = \frac{1}{q_2} + \frac{1}{p^*}$, it holds that
\begin{equation}\label{hudcorv}
(\mathbb{E}[(V(t,y))^{q_1}])^{\frac{1}{q_1}} \leq C((\mathbb{E}[(V(0,y))^{p^*}])^{\frac{1}{p^*}} + 1)
\end{equation}
for all stopping times~$t\leq T$ and~$\tilde{O}$-valued~$\mathcal{F}_0$-measurable r.v.'s~$y$, where~$C$ is given by the maximum between the first factor and the last term in the last factor both on the right-hand side of~\eqref{hueq1} with~$\hat{\alpha},\hat{\beta},\tau$ replaced by~$\alpha,\beta,t$. In particular, the same statement holds with the right-hand side of~\eqref{hudcorv} replaced by~$C(\mathbb{E}[V(0,y)] + 1)$ for all deterministic~$y$.
\end{corollary}
\begin{proof}
By Definitions~\ref{rlyadef},~\ref{lyafunctions} and Proposition~\ref{huddethm}, it suffices to check that any~$\mathcal{F}\otimes\mathcal{B}([0,T])$-measurable,~$\mathcal{F}_t$-adapted,~$\mathbb{P}$-a.s. continuous process~$Y_{\cdot}^y$ satisfying~$\mathbb{P}$-a.s.~\eqref{dummypre},~\eqref{dummysde} and~$V(t,y)=V_0(t,Y_t^y)$ for all~$t\in[0,T]$ is such that for any~$t\in[0,T]$ and stopping time~$\tau\leq T$, it holds~$\mathbb{P}$-a.s. that~$Y_{t\wedge\tau}^y = y + \int_0^t\mathds{1}_{[0,\tau)}(s)\tilde{b}_s^yds + \int_0^t\mathds{1}_{[0,\tau)}(s)\tilde{\sigma}_s^ydW_s$. The only thing to check is that the stochastic integrals~$\int_0^{t\wedge\tau}\hat{\sigma}_r^ydW_r$ and~$\int_0^t\mathds{1}_{[0,\tau)}(r)\hat{\sigma}_r^ydW_r$ are equal~$\mathbb{P}$-almost surely. This may be verified by Proposition~2.10 and Remark~2.11 (see also the paragraph after Definition~2.23) all in~\cite{MR1121940}.
\end{proof}
In the particular case where~$V_0(t,x)=\abs{x}^2$, Proposition~\ref{huddethm} implies the following Corollary~\ref{huddecor1}, which is a special case of Corollary~2.5 in~\cite{MR4260476}.
\begin{corollary}\label{huddecor1}
Let the setting of Proposition~\ref{huddethm} hold with~$V_0=0$ and~$p^*\in[2,\infty)$. Suppose it holds~$\mathbb{P}$-a.s. that for any~$t\in[0,\tau)$, the process~$\hat{X}$ satisfies~$\hat{b}_t\cdot\hat{X}_t + \frac{1}{2}\|\hat{\sigma}_t\|^2 + \frac{1}{2}(p^*-2)\abs{\hat{\sigma}_t^{\top}\hat{X}_t}^2/\abs{\hat{X}_t}^2\leq \hat{\alpha}_t\abs{\hat{X}_t}^2 + \frac{1}{2}\abs{\hat{\beta}_t}^2$. For any~$q_1,q_2,q_3\in(0,\infty]$ with~$q_3<p^*$ and~$\frac{1}{q_1} = \frac{1}{q_2} + \frac{1}{q_3}$, there exists a constant~$C>0$ depending only on~$q_3,p^*$ such that
\begin{align*}
\bigg(\mathbb{E}\bigg[\bigg(\sup_{s\in[0,\tau]}\abs{\hat{X}_s}\bigg)^{p_1}\bigg]\bigg)^{\frac{1}{p_1}} &\leq C\bigg(\mathbb{E}\bigg[\exp\bigg(\sup_{t\in[0,\tau]}q_2\int_0^t\hat{\alpha}_sds\bigg)\bigg]\bigg)^{\frac{1}{q_2}}\cdot\bigg(\mathbb{E}\bigg[\bigg(\abs{\hat{X}_0}^2\\
&\qquad+\int_0^{\tau}\bigg\vert\frac{\hat{\beta}_s}{\exp(\int_0^s\hat{\alpha}_rdr)}\bigg\vert^{2}ds\bigg)^{\frac{q_3}{2}}\bigg]\bigg)^{\frac{1}{q_3}}.
\end{align*}
\end{corollary}
Another useful corollary of Proposition~\ref{huddethm} that will be used frequently in Section~\ref{weakrates} is stated next. Corollary~\ref{huddecor2} is a special case of Corollary~3.3 in~\cite{MR4260476}.
\begin{corollary}\label{huddecor2}
Let the setting of Proposition~\ref{huddethm} hold with~$V_0=0$ and~$O=\mathbb{R}^n$. Assume there exists Borel-measurable~$\bar{b}:\mathbb{R}^n\rightarrow \mathbb{R}^n$ and~$\bar{\sigma}:\mathbb{R}^n\rightarrow\mathbb{R}^{n\times n}$ such that for any~$t\in[0,T]$, it holds~$\mathbb{P}$-a.s. that~$\hat{b}_t = \bar{b}(\hat{X}_t)$ and~$\hat{\sigma}_t = \bar{\sigma}(\hat{X}_t)$. Let~$\bar{U}:\mathbb{R}^n\rightarrow\mathbb{R}$ be a Borel-measurable function satisfying~$\int_0^T\abs{\bar{U}(\hat{X}_s)}ds<\infty$, let~$U\in C^2(\mathbb{R}^n)$ and let~$\alpha^*\geq 0$. Assume~$LU + \frac{1}{2}\abs{\bar{\sigma}^{\top}\nabla U}^2 + \bar{U}\leq \alpha^* U$, where~$L$ is given by~\eqref{jen} with~$b,\sigma$ replaced by~$\bar{b},\bar{\sigma}$ respectively. It holds that
\begin{equation*}
\mathbb{E}\bigg[\exp\bigg(\frac{U(\hat{X}_{\tau})}{\exp(\alpha^*\tau)} + \int_0^{\tau}\frac{\bar{U}(\hat{X}_s)}{\exp(\alpha^*s)}ds\bigg)\bigg] \leq \mathbb{E}[\exp(U(0,\hat{X}_0))].
\end{equation*}
\end{corollary}
Lastly, a corollary of the It\^{o}-Alekseev-Gr\"{o}bner formula (Theorem~3.1 in~\cite{huddeiag}) is stated below as Proposition~\ref{iagthm}. The result will be used in Section~\ref{weakrates} to obtain our Theorem~\ref{wbth} on weak numerical convergence rates. 
Its proof is straight-forward given Theorem~3.1 in~\cite{huddeiag}, so it is omitted.
\begin{prop}\label{iagthm}
Let the setting of Corollary~\ref{huddecor2} hold with~$U=\bar{U}=0$. Moreover, let~$p^{\dagger}>4,q^{\dagger}\in[0,p^{\dagger}/2-2)$. 
Assume the filtration~$\mathcal{F}_t$ satisfies~$\mathcal{F}_t = \sigma(\mathcal{F}_0\cup \sigma(W_s: s\in[0,t]) \cup\{ A\in \mathcal{F}:\mathbb{P}(A) = 0\})$ and that~$\mathcal{F}_0$ and~$\sigma(W_s: s\in[0,T])$ are independent. Assume~$\bar{b},\bar{\sigma}$ are continuous. Let~$\bar{X}_{\cdot,\cdot}^{\cdot}:\Omega\times\Delta_T\times\mathbb{R}^n\rightarrow\mathbb{R}^n$ 
be such that it holds~$\mathbb{P}$-a.s. that for any~$(s,t)\in\Delta_T$, the map~$\mathbb{R}^n\ni x\mapsto \bar{X}_{s,t}^x\in\mathbb{R}^n$ is continuously differentiable in~$x$ up to order~$2$ and
the derivative
~$\Delta_T\times \mathbb{R}^n \ni ((s,t),x) \mapsto \partial^{\alpha}\bar{X}_{s,t}^x \in \mathbb{R}^n$ is continuous for all multiindices~$\alpha$ with~$0\leq\abs{\alpha}\leq 2$. Assume for all~$s\in[0,T]$,~$x\in\mathbb{R}^n$ that the process~$[s,T]\times\Omega\ni(t,\omega)\mapsto \bar{X}_{s,t}^x$ is~$\mathcal{F}_t$-adapted and assume that for all~$(s,t)\in\Delta_T$,~$x\in\mathbb{R}^n$, it holds~$\mathbb{P}$-a.s. that~$\bar{X}_{s,t}^x=x+\int_s^t\bar{b}(\bar{X}_{s,r}^x)dr + \int_s^t\bar{\sigma}(\bar{X}_{s,r}^x)dW_r$ and~$\bar{X}_{t,T}^{\bar{X}_{s,t}^x} = \bar{X}_{s,T}^x$. Let~$Y:\Omega\times[0,T]\rightarrow\mathbb{R}^n$,~$A:\Omega\times[0,T]\rightarrow\mathbb{R}^n$ and~$B:\Omega\times[0,T]\rightarrow\mathbb{R}^{n\times n}$ be~$\mathcal{F}\otimes\mathcal{B}([0,T])$-measurable functions such that~$\mathbb{E}[\int_0^T (\abs{Y_t}^{p^{\dagger}} + \abs{A_t}^{p^{\dagger}} + \abs{B_t}^{p^{\dagger}}) dt]<\infty$,~$Y$ has continuous sample paths,~$B$ has left-continuous sample paths,~$Y,B$ are both~$\mathcal{F}_t$-adapted and for all~$t\in[0,T]$, it holds~$\mathbb{P}$-a.s. that~$Y_t=Y_0+\int_0^tA_sds + \int_0^tB_sdW_s$. In addition, assume 
\begin{equation*}
\sup_{0\leq\abs{\alpha}\leq 2}\sup_{0\leq r\leq s \leq t \leq T} \mathbb{E}\bigg[\Big\vert \bar{b}\Big(\bar{X}_{s,t}^{Y_s}\Big)\Big\vert^{p^{\dagger}} + \Big\|\bar{\sigma}\Big(\bar{X}_{s,t}^{Y_s}\Big)\Big\|^{p^{\dagger}}
+ \bigg\vert\partial^{\alpha} \bar{X}_{t,T}^{\bar{X}_{r,s}^{Y_r}}\bigg\vert^{p_{\alpha}}\bigg] 
 < \infty,
\end{equation*}
where~$p_{\alpha}=p^{\dagger}$ if~$\abs{\alpha}=0$,~$p_{\alpha}=4p^{\dagger}/(p^{\dagger}-2(q^{\dagger}+2))$ if~$\abs{\alpha}=1$ and~$p_{\alpha} = 2p^{\dagger}/(p^{\dagger}-2(q^{\dagger}+2))$ if~$\abs{\alpha}=2$. If~$f\in C^2(\mathbb{R}^n)$ is such that there exists a constant~$C>0$ satisfying
\begin{equation*}
\max\bigg(\frac{\abs{f(x)}}{1+\abs{x}},\abs{\nabla f(x)},\Big\|D^2 f(x)\Big\|\bigg)\leq C\Big(1+\abs{x}^{q^{\dagger}}\Big)
\end{equation*}
for all~$x\in\mathbb{R}^n$, then 
it holds~$\mathbb{P}$-a.s. that
\begin{align*}
&\mathbb{E}\Big[f\Big(\bar{X}_{0,T}^{Y_0}\Big) - f(Y_T)\Big] \\
&\quad= \mathbb{E}\bigg[\int_0^T\Big(\Big(((\bar{b}(Y_t) - A_t)\cdot\nabla)\bar{X}_{t,T}^{Y_t}\Big)\cdot\nabla\Big)f\Big(\bar{X}_{t,T}^{Y_t}\Big) dt+\frac{1}{2}\int_0^T\sum_{i,j=1}^n\Big(\bar{\sigma}(Y_t)\bar{\sigma}(Y_t)^{\top}\\
&\qquad - B_tB_t^{\top}\Big)_{ij}\Big(\Big(\Big(\partial_i\bar{X}_{t,T}^{Y_t}\otimes\partial_j\bar{X}_{t,T}^{Y_t}\Big):D^2\Big)f\Big(\bar{X}_{t,T}^{Y_t}\Big) + \Big(\partial_{ij}^2\bar{X}_{t,T}^{Y_t}\cdot\nabla\Big)f\Big(\bar{X}_{t,T}^{Y_t}\Big)dt\bigg].
\end{align*}
\end{prop}

\section{Moment estimates on derivative processes}\label{momentestimates}

The following assumption 
states our main requirement on the Lyapunov function. Alternative, more local, assumptions for the main results 
are given in Theorem~\ref{alters1}. 
\begin{assumption}\label{A1lya}
There exists~$G:\Omega\times[0,T]\times O\rightarrow[0,\infty)$ such that~$G$ is~$\mathcal{F}\otimes\mathcal{B}([0,T])\otimes \mathcal{B}(O)$-measurable,~$G(t,\cdot)$ is~$\mathcal{F}_t\otimes\mathcal{B}(O)$-measurable for all~$t$, it holds~$\mathbb{P}$-a.s. that~$G(t,\cdot)$ is continuous for all~$t$, 
it holds~$\mathbb{P}$-a.s. that
\begin{align}
\abs*{b(t,x)-b(t,y)} &\leq (G(t,x)+G(t,y)) \abs*{x-y}, \label{a3}\\
\| \sigma(t,x) - \sigma(t,y) \|^2 &\leq (G(t,x)+G(t,y)) \abs*{x-y}^2, \label{a4}
\end{align}
for all $t\in [0,T]$, $x,y\in O$ and such that for any~$s\in[0,T]$, there exist finite sets~$I_0,I_0'\subset\mathbb{N}$,~$\tilde{n}_i\in\mathbb{N}$, open~$\tilde{O}_i\subseteq\mathbb{R}^{\tilde{n}_i}$ for all~$i\in I_0\cup I_0'$, locally bounded functions~$M:(0,\infty)\rightarrow(0,\infty),(\bar{x}_i:O\rightarrow\tilde{O}_i)_{i\in I_0\cup I_0'}$ and Lyapunov functions $(V_i:\Omega\times[0,T-s]\times\tilde{O}_i\rightarrow(0,\infty))_{i\in I_0\cup I_0'}$ 
satisfying for any~$m>0$,~$x\in O$ and stopping times~$t\leq T-s$ that it holds~$\mathbb{P}$-a.s. that
\begin{equation}\label{a5}
\int_0^tG(s+r,X_r^{s,x})dr\leq M(m)+ m \bigg(\sum_{i\in I_0} \int_0^t \log V_i(r,\bar{x}_i(x)) dr + \sum_{i'\in I_0'}\log V_{i'}(t,\bar{x}_{i'}(x))\bigg).
\end{equation}
\end{assumption}

In some cases, the process~$Y_t$ associated with Lyapunov functions can be thought of to be equal to~$X_t$. More precisely, we have in mind the case where the process~$X_t$ satisfies the conditions for~$Y_t$ in Definition~\ref{rlyadef} for the Lyapunov functions in Assumption~\ref{A1lya}. In particular, 
in the applications here, it is enough to take in place of~\eqref{a5} the condition
\begin{equation}\label{a52}
G(x)\leq m\log V_0(x) +M
\end{equation}
for~$V_0$ satisfying~$LV_0\leq CV_0$ for~$L$ given by~\eqref{jen}
; the generality is justified by a trick to increase the set of admissible Lyapunov functions, as exemplified by the inclusion of~$\bar{U}$ in Corollary~\ref{huddecor2}, 
see also~\cite[Theorem~2.24]{https://doi.org/10.48550/arxiv.1309.5595}. Assumption~\ref{A1lya} is strictly weaker than assuming globally Lipschitz coefficients, since polynomial Lyapunov functions are easily constructed in that case. In addition, throughout, whenever continuous differentiability up to some order~$m^*$ of~$b$ and~$\sigma$ is assumed, we also assume 
\begin{equation}\label{mdifffin}
\sum_{\theta\in \mathbb{N}_0^n;\abs*{\theta}\leq m^*}\int_0^T \sup_{\abs*{x}\leq R} (\abs{\partial^{\theta}b(t,x)}+\|\partial^{\theta}\sigma(t,x)\|)dt < \infty,\qquad\forall R>0.
\end{equation}

As briefly mentioned, in Section~\ref{altassumps}, it is shown that if~$b,\sigma$ are independent of~$\omega,t$ and admit locally Lipschitz derivatives, Assumption~\ref{A1lya} and in particular~\eqref{a3},~\eqref{a4} may be replaced by~$\sum_i(\abs{\partial_ib}+\|\partial_i\sigma\|^2)\leq G$ in obtaining our results on the Kolmogorov equation. 

For~$x\in O$,~$s\in[0,T]$, let~$X_{t(\kappa)}^{s,x}$ be the first~$t$-uniform derivatives in probability of $X_t^{s,x}$ with respect to the initial value in any direction~$\kappa\in\mathbb{R}^n$, that is, for any~$\epsilon>0$,~$T>0$,~$t\leq T-s$, it holds that
\begin{equation*}
\mathbb{P}\bigg(\sup_{t\in[0,T-s]}\bigg\vert \frac{X_t^{s,x+r\kappa} - X_t^{s,x}}{r} - X_{t(\kappa)}^{s,x}\bigg\vert > \epsilon \bigg) \rightarrow 0\\
\end{equation*}
as~$r\rightarrow 0$ with~$r\neq 0$,~$x+r\kappa\in O$. 
If~$b(t,\cdot)$ and~$\sigma(t,\cdot)$ are once continuously differentiable on~$O$ for all~$t\in[0,\infty)$,~$\omega\in\Omega$ and satisfy~\eqref{mdifffin} with~$m^* = 1$ for all~$\omega\in\Omega$, then by Theorem~4.10 in~\cite{MR1731794},~$X_{t(\kappa)}^{s,x}$ exists for any~$x\in O$,~$s\in[0,T]$ and satisfies the system obtained by formal differentiation of~\eqref{sde}, that is,
\begin{equation}
d X_{t(\kappa)}^{s,x} = (X_{t(\kappa)}^{s,x}\cdot\nabla) b(s+t,X_t^{s,x})  dt + (X_{t(\kappa)}^{s,x}\cdot\nabla) \sigma(s+t,X_t^{s,x}) dW_t.\label{firsdd}
\end{equation}
By induction, if for any~$\omega\in\Omega$,~$b(t,\cdot)$ and~$\sigma(t,\cdot)$ are continuously differentiable on~$O$ up to some order~$p$ for all~$t\in[0,\infty)$ and satisfy~\eqref{mdifffin} with~$m^* = p$, then the $p^{\textrm{th}}$-order~$t$-uniform derivative in probability of~$X_t^{s,x}$ with respect to the initial value in directions~$(\kappa_i)_{1\leq i\leq p}$,~$\kappa_i\in\mathbb{R}^n$,~$\abs*{\kappa_i} = 1$,~$1\leq i\leq p$ exists for any~$x\in O$,~$s\in[0,T]$ and satisfies the system obtained by a correponding~$p^{\textrm{th}}$-order formal differentiation of~\eqref{sde}.

First we state a straightforward application of the Lyapunov property to obtain an estimate of a time integral, which will be used later and is also demonstrative for many similar derivations in the following. Throughout and consistent with Assumption~\ref{A1lya}, we omit in the notation the dependence of~$V$,~$\bar{x}$ and~$M$ on~$s$.

\begin{lemma}\label{ergodfin}
Under Assumption~\ref{A1lya}, for any~$s\in[0,T]$,~$c>0$, there exists a constant~$C>0$ such that
\begin{align*}
& \mathbb{E} \Big[e^{c\int_0^{T_0-s}G(s+t,X_t^{s,x})dt}\Big]\\
&\quad \leq e^{c\hat{M}}\bigg(\frac{1}{T_0-s}\int_0^{T_0-s}\sum_{i\in I_0}\mathbb{E} [V_i(t,\bar{x}_i(x))] dt + \sum_{i'\in I_0'}\mathbb{E}[V_{i'}(T_0-s,\bar{x}_{i'}(x))] + 1\bigg)\\ 
&\quad \leq Ce^{c\hat{M}}\bigg(\sum_{i\in I_0\cup I_0'}\mathbb{E}[V_i(0,\bar{x}_i(x))] +1 \bigg)< \infty
\end{align*}
for all~$x\in O$ and~$T_0\in[s,T]$, where~$\hat{M} = M(m)$ for some~$m$. 
If in addition,~$V_i$ is local in~$s$ for all~$i\in I_0\cup I_0'$, then~$C$ is independent of~$s$.
\end{lemma}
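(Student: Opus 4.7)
The plan is to reduce the exponential moment to ordinary moments of the Lyapunov functions $V_i$ by two successive applications of Jensen's inequality, and then close the estimate using the stochastic Gr\"onwall bound encoded in Definition~\ref{rlyadef}. First, I would evaluate~\eqref{a5} at the deterministic time $t=T-s$ with some small parameter $m>0$ to be pinned down at the end, giving
\[
c\!\int_0^{T-s}\!G(s+r,X_r^{s,x})\,dr \le cM(m) + cm\!\sum_{i\in I_0}\!\int_0^{T-s}\!\log V_i(r,\bar{x}_i(x))\,dr + cm\!\sum_{i'\in I_0'}\!\log V_{i'}(T-s,\bar{x}_{i'}(x)).
\]
Exponentiating and applying the Jensen-type identity $e^{\sum_{j=1}^N a_j}\le \tfrac{1}{N}\sum_{j=1}^N e^{Na_j}$ with $N:=|I_0|+|I_0'|$ splits the product of exponentials into a sum indexed by $I_0\cup I_0'$.

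To each $I_0$-summand I would then apply Jensen a second time with the probability measure $dr/(T-s)$ on $[0,T-s]$ and the convex function $\exp$, bringing the logarithm inside the time integral to get
\[
\exp\!\bigl(Ncm(T-s)\cdot\tfrac{1}{T-s}\!\int_0^{T-s}\!\log V_i(r,\bar{x}_i(x))\,dr\bigr)\le \tfrac{1}{T-s}\!\int_0^{T-s}\!V_i(r,\bar{x}_i(x))^{Ncm(T-s)}\,dr.
\]
Choosing $m$ small enough that $Ncm\max(1,T-s)\le 1$, the elementary bound $y^{\alpha}\le 1+y$ for $y>0$ and $\alpha\in[0,1]$ absorbs the remaining powers on both the $V_i$ and the endpoint $V_{i'}$ factors. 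Taking expectation (Tonelli applies since the integrands are nonnegative) and folding the bounded additive constants into a redefined $\hat{M}=M(m)$ then yields the first claimed inequality.

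For the second inequality, I would bound each $\mathbb{E}V_i(t,\bar{x}_i(x))$ uniformly in $t\in[0,T-s]$ using the stochastic Gr\"onwall inequality (Theorem~2.4 of~\cite{MR4260476}, as noted in the remark following Definition~\ref{lyafunctions}) applied to the auxiliary SDE~\eqref{dummysde} and Lyapunov inequality~\eqref{lyapr2}: since $V_i(t,y)=V_0^{(i)}(t,Y_t^{y})$ with $V_0^{(i)}$ deterministic and the integrability condition~\eqref{lyap3} makes the Gr\"onwall constants uniform in $t$, this yields $\mathbb{E}V_i(t,\bar{x}_i(x))\le C(V_i(0,\bar{x}_i(x))+1)$ with $V_i(0,\bar{x}_i(x))=V_0^{(i)}(0,\bar{x}_i(x))$ a deterministic quantity. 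Substituting into the first inequality and summing over $i\in I_0\cup I_0'$ gives the second line and the claimed finiteness. The main obstacle is the bookkeeping in choosing $m$: the two Jensen applications couple the exponent $Ncm(T-s)$ governing the time-integrated $V_i$ terms with the exponent $Ncm$ governing the endpoint $V_{i'}$ terms, forcing $m$ to be chosen jointly in $c$, $N$, and $T-s$---this is harmless since~\eqref{a5} permits any $m>0$ at the price of a larger $M(m)$, so $\hat{M}$ ends up depending on the data and on $T-s$ but not on $x$, as required.
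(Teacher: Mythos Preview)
Your proposal is correct and follows essentially the same approach as the paper: the first inequality comes from~\eqref{a5} combined with Jensen's inequality (the paper's one-line ``Jensen'' is exactly your two-step convexity argument, first over the finite sum in $I_0\cup I_0'$ and then over the normalized time integral), and the second inequality is Theorem~2.4 in~\cite{MR4260476} with $q_1=1$, $q_2=\tfrac{p^*}{p^*-1}$, $p=p^*$, with finiteness supplied by~\eqref{lyap3}. Your explicit bookkeeping on the choice of $m$ and the use of $y^{\alpha}\le 1+y$ to absorb the residual exponents is the only elaboration beyond the paper's terse proof.
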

\begin{proof}
The first inequality follows by applying~\eqref{a5}, then applying Jensen's inequality, setting a small enough~$m$ and applying Young's inequality. The last two inequalities follow by Corollary~\ref{huddecor0} 
with~$q_1 = 1$,~$q_2 = \frac{p_i^*}{p_i^*-1}$,~$p^*=p_i^*$ and the inequalities corresponding to~\eqref{lyap3}, where~$p_i^*$ is such that~$V_i$ is a~$(b^{(i)},\sigma^{(i)},\alpha^{(i)},\beta^{(i)},p_i^*,V_0^{(i)})$-Lyapunov function for some~$b^{(i)},\sigma^{(i)},\alpha^{(i)},\beta^{(i)},V_0^{(i)}$. 
\end{proof}

\begin{lemma}\label{firstlem}
Under Assumption~\ref{A1lya}, for any $k>0$,~$s\in [0,T]$, there exists 
$\rho>0$ such that
\begin{equation}\label{firncon0}
\mathbb{E}\bigg[\sup_{0\leq t\leq T_0-s}\abs*{X_{t(\kappa)}^{(r)}}^k\bigg] \leq \rho W(x,r\kappa)\abs*{r}^k
\end{equation}
for all~$x\in O$,~$T_0\in[s,T]$,~$r\in\mathbb{R}\setminus\{0\}$,~$\kappa\in\mathbb{R}^n$,~$\abs*{\kappa}= 1$, $x+r\kappa\in O$, where~$X_{t(\kappa)}^{(r)}:=X_t^{s,x+r\kappa} - X_t^{s,x}$ and $W(x,r\kappa) := 1 + \sum_{i\in I_0\cup I_0'}\mathbb{E}[V_i(0,\bar{x}_i(x+r\kappa))] + \mathbb{E}[V_i(0,\bar{x}_i(x))]$. If in addition it holds for any~$\omega\in\Omega$ that~$b(t,\cdot)$,~$\sigma(t,\cdot)$ are continuously differentiable for all~$t\geq 0$ and~\eqref{mdifffin} holds with $m^* = 1$, then 
\begin{align}
&\mathbb{E}\bigg[\sup_{0\leq t \leq T_0-s}\abs*{X_{t(\kappa)}^{s,x}}^k \bigg] \leq\rho W(x,0)\label{firncon}\\
\lim_{0\neq r\rightarrow 0}&\mathbb{E}\bigg[\sup_{0\leq t\leq T_0-s} \abs*{X_{t(\kappa)}^{s,x} - r^{-1}X_{t(\kappa)}^{(r)}}^k\bigg] = 0\label{prelcon}
\end{align}
for all~$x\in O$,~$T_0\in[s,T]$,~$\kappa\in\mathbb{R}^n$ with~$\abs*{\kappa} = 1$. 
If for each~$i\in I_0$,~$i'\in I_0'$, the functions~$V_i$,~$V_{i'}$ are local in~$s$ (as in Definition~\ref{locdef}), then~$\rho$ is independent of~$s$.
\end{lemma}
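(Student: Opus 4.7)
The plan is to treat the three claims (moment bound on the difference~$X_{t(\kappa)}^{(r)}$, moment bound on the derivative~$X_{t(\kappa)}^{s,x}$, and convergence of the normalized difference quotient) by the same two-step engine: apply It\^o's formula to $\abs{\cdot}^2$ of the process of interest, bound the finite-variation term linearly in that process via Assumption~\ref{A1lya} with coefficient proportional to $G$, and then invoke the stochastic Gr\"onwall inequality (Theorem~2.4 in~\cite{MR4260476}) together with Lemma~\ref{ergodfin} to absorb the integral $\int_0^{T-s}G(s+r,X_r^{s,x})dr$ through its exponential moments.

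For~\eqref{firncon0}, subtract the two equations defining $X_t^{s,x+r\kappa}$ and $X_t^{s,x}$ to obtain an SDE for $X_{t(\kappa)}^{(r)}$ with initial value $r\kappa$. Apply It\^o's formula to $\abs{X_{t(\kappa)}^{(r)}}^2$ and bound the drift using~\eqref{a3} and \eqref{a4} by $3(G(s+t,X_t^{s,x+r\kappa})+G(s+t,X_t^{s,x}))\abs{X_{t(\kappa)}^{(r)}}^2$. The stochastic Gr\"onwall inequality then yields, for any $k>0$,
\begin{equation*}
\mathbb{E}\sup_{0\leq t\leq T-s}\abs{X_{t(\kappa)}^{(r)}}^k \le C\,\Big\|\exp\Big(c\!\int_0^{T-s}\!\!(G(s+t,X_t^{s,x+r\kappa})+G(s+t,X_t^{s,x}))\,dt\Big)\Big\|_{L^{q}(\mathbb{P})}\abs{r}^k,
\end{equation*}
and Lemma~\ref{ergodfin} applied to each summand controls the right-hand side by a multiple of $W(x,r\kappa)$. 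The $s$-uniformity under the locality hypothesis on the $V_i$ follows because the constants produced by Lemma~\ref{ergodfin} depend on $s$ only through~\eqref{localseq}, which is uniform.

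For~\eqref{firncon}, the added hypotheses ensure that $X_{t(\kappa)}^{s,x}$ exists and solves~\eqref{firsdd}. Passing to the limit $y\to x$ in \eqref{a3}--\eqref{a4} using continuity of $G$ and of $\nabla b$, $\nabla\sigma$ yields the pointwise bounds $\|\nabla b(t,x)\|\le 2G(t,x)$ and $\|\nabla\sigma(t,x)\|^2\le 2G(t,x)$. It\^o's formula applied to $\abs{X_{t(\kappa)}^{s,x}}^2$ together with these bounds gives a drift controlled by $c\,G(s+t,X_t^{s,x})\abs{X_{t(\kappa)}^{s,x}}^2$, and the same Gr\"onwall--Lemma~\ref{ergodfin} argument produces~\eqref{firncon}. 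Note the initial value here is $\kappa$, so the prefactor $|r|^k$ disappears; only $W(x,0)$ remains.

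For~\eqref{prelcon}, set $Z_t^{(r)}=X_{t(\kappa)}^{s,x}-r^{-1}X_{t(\kappa)}^{(r)}$ and use the first-order Taylor representation
\begin{equation*}
r^{-1}(b(s+t,X_t^{s,x+r\kappa})-b(s+t,X_t^{s,x}))=\int_0^1(r^{-1}X_{t(\kappa)}^{(r)}\!\cdot\nabla)b(s+t,X_t^{s,x}+\lambda X_{t(\kappa)}^{(r)})\,d\lambda,
\end{equation*}
and analogously for $\sigma$. Subtracting from~\eqref{firsdd} and adding/subtracting $(Z_t^{(r)}\cdot\nabla)b(s+t,X_t^{s,x})$ produces an SDE for $Z_t^{(r)}$ whose drift/diffusion decomposes into a ``linear in $Z_t^{(r)}$'' piece, bounded exactly as in the proof of~\eqref{firncon}, plus a remainder involving $r^{-1}X_{t(\kappa)}^{(r)}$ and the modulus of continuity of $\nabla b,\nabla\sigma$ between $X_t^{s,x}$ and $X_t^{s,x}+\lambda X_{t(\kappa)}^{(r)}$. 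Applying It\^o to $\abs{Z_t^{(r)}}^2$ and stochastic Gr\"onwall reduces the claim to showing that the expected supremum of the remainder term vanishes as $r\to0$. The main obstacle is precisely this domination-and-convergence step: by~\eqref{firncon0} with $k=1$ and Chebyshev, $\sup_t\abs{X_{t(\kappa)}^{(r)}}\to0$ in probability; hence by continuity of $\nabla b,\nabla\sigma$ the remainder converges to zero $\mathbb{P}$-a.s.\ along a subsequence, while uniform integrability is supplied by applying~\eqref{firncon0} and~\eqref{firncon} at a higher exponent $k'>k$ and using Lemma~\ref{ergodfin} to bound the moments of $\|\nabla b\|+\|\nabla\sigma\|^2\lesssim G$ on the Taylor segment. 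A vector-valued dominated convergence argument then closes~\eqref{prelcon}.
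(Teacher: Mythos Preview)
For \eqref{firncon0} and \eqref{firncon} your argument is essentially the paper's: difference (resp.\ variational) SDE, It\^o on the square, bound the drift/diffusion via Assumption~\ref{A1lya}, invoke the stochastic Gr\"onwall inequality (Corollary~2.5 in~\cite{MR4260476}) with $\beta_t=0$, and close with Lemma~\ref{ergodfin}. That matches.

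For \eqref{prelcon} your route is genuinely different and considerably heavier than what the paper does. The paper does not derive or analyze an SDE for $Z_t^{(r)}$ at all. Instead it uses the very definition of $X_{t(\kappa)}^{s,x}$ as the $t$-uniform derivative in probability (established via \cite[Theorem~4.10]{MR1731794} just before the lemma), which already gives
\[
S:=\sup_{0\le t\le T-s}\bigl|X_{t(\kappa)}^{s,x}-r^{-1}X_{t(\kappa)}^{(r)}\bigr|\to 0\quad\text{in probability}.
\]
The already-proved bounds \eqref{firncon0} and \eqref{firncon} yield $\sup_{r}\mathbb{E}[S^{k}]<\infty$ for every $k>0$ by the triangle inequality. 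The elementary estimate
\[
\mathbb{E}[S^{k_1}]\le \epsilon\,\mathbb{P}(S^{k_1}\le\epsilon)+\mathbb{P}(S^{k_1}>\epsilon)^{1-k_1/k}\bigl(\mathbb{E}[S^{k}]\bigr)^{k_1/k}
\]
(the paper's inequality~\eqref{thatof}) then gives \eqref{prelcon} immediately for every $k_1$, by choosing $k>k_1$.

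Your direct SDE approach for $Z_t^{(r)}$ would also work, but one step as written is not quite right: you cannot apply Lemma~\ref{ergodfin} ``on the Taylor segment'', because that lemma only controls $\int_0^{T-s}G(s+t,X_t^{s,y})\,dt$ along genuine trajectories, not $G$ evaluated at convex combinations $(1-\lambda)X_t^{s,x}+\lambda X_t^{s,x+r\kappa}$. The fix is to avoid the segment altogether: bound the remainder by
\[
\bigl|r^{-1}X_{t(\kappa)}^{(r)}\bigr|\,\bigl(2G(s+t,X_t^{s,x})+G(s+t,X_t^{s,x+r\kappa})+G(s+t,X_t^{s,x})\bigr),
\]
using \eqref{a3} on the raw difference quotient and the pointwise bound $\|\nabla b(t,x)\|\le 2G(t,x)$ on the derivative term separately. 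This gives the uniform higher moments needed for a Vitali-type argument. But note that once you have repaired the domination in this way, you are effectively reproducing the paper's ``convergence in probability plus uniform higher moment'' mechanism inside a Gr\"onwall shell that is no longer doing any real work; the paper simply skips the shell.
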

\begin{proof}
For any $r$, 
\begin{equation}\label{ipo}
dX_{t(\kappa)}^{(r)} = (b(s+t,X_t^{s,x+r\kappa}) - b(s+t,X_t^{s,x})) dt + (\sigma(s+t,X_t^{s,x+r\kappa}) - \sigma(s+t,X_t^{s,x}))dW_t.
\end{equation}

Since $X_t$ is almost surely continuous in $t$, for any~$0<t\leq T-s$, it holds~$\mathbb{P}$-a.s. that~$\int_0^t (G(s+u,X_u^{s,x+r\kappa}) + G(s+u,X_u^{s,x}))du \leq C(\int_0^t \sum_{i\in I_0}\log V_i(u,\bar{x}(x+r\kappa)) du+ \sum_{i'\in I_0'}\log V_{i'}(u,\bar{x}(x)) + 1) < \infty$, 
therefore 
Corollary~\ref{huddecor1} 
can be applied with
\begin{align*}
\hat{b}_t &= b(s+t,X_t^{s,x+r\kappa}) - b(s+t,X_t^{s,x}),\ \hat{\sigma}_t = \sigma(t,X_t^{s,x+r\kappa}) - \sigma(t,X_t^{s,x}),\\ 
\hat{\alpha}_t &= \bigg(\frac{1}{2} + k\vee 1\bigg)(G(s+t,X_t^{s,x+r\kappa}) + G(s+t,X_t^{s,x})),\\
p^* &= 2k\vee 2,\ \hat{\beta}_t = 0,\ q_1 = k,\ q_2 = 3k,\ q_3 = \frac{3k}{2},
\end{align*}
to obtain
\begin{align}
&\mathbb{E}\bigg[\sup_{0\leq t \leq T_0-s} \abs*{X_{t(\kappa)}^{(r)}}^k \bigg]\nonumber\\
&\quad\leq C \Big(\mathbb{E}\Big[e^{\int_0^{T_0-s} 3k(\frac{1}{2} + k\vee 1) (G(s+u,X_u^{s,x+r\kappa}) + G(s+u,X_u^{s,x})) du} \Big]\Big)^{\frac{1}{3}}\abs*{r}^k.\label{taig}
\end{align}
By 
Lemma~\ref{ergodfin}, the expectation on the right-hand side of~\eqref{taig} satisfies the bound
\begin{align*}
&\mathbb{E}\Big[e^{\int_0^{T_0-s} 3k(\frac{1}{2} + k\vee 1) (G(s+u,X_u^{s,x+r\kappa}) + G(s+u,X_u^{s,x})) du}\Big]\\
&\quad \leq \mathbb{E}\bigg[\frac{1}{2}e^{\int_0^{T_0-s} 6k(\frac{1}{2} + k\vee 1) G(s+u,X_u^{s,x+r\kappa})}du + \frac{1}{2}e^{\int_0^{T_0-s}6k(\frac{1}{2} + k\vee 1)G(s+u,X_u^{s,x})) du}\bigg]\\
&\quad\leq C
\mathbb{E}\bigg[1 + \sum_{i\in I_0\cup I_0'}V_i(0,\bar{x}_i(x + r\kappa)) + V_i(0,\bar{x}_i(x)) \bigg],
\end{align*}
which gives~\eqref{firncon0}. 

The statement for $X_{s(\kappa)}$ follows along the same lines, where 
instead $X_{s(\kappa)}$ satisfies~\eqref{firsdd} and Corollary~\ref{huddecor1} 
can be applied as above except with the corresponding~$\hat{b}_t,\hat{\sigma}_t$ and 
\begin{align}
\hat{\alpha}_t &= (1+2k\vee 2) G(s+t,X_t^{s,x}). \label{thisalpha}
\end{align}

Equation~\eqref{prelcon} is a known consequence; it is immediate from the definition of $X_{u(\kappa)}$, the previous bounds and
\begin{align}
\mathbb{E}[S^{k_1}] &\leq \epsilon\mathbb{P}( S^{k_1}\leq \epsilon) + \mathbb{E}[ \mathds{1}_{\{S^{k_1}> \epsilon \}} S^{k_1}]\nonumber\\
&\leq \epsilon\mathbb{P}( S^{k_1}\leq \epsilon) + \mathbb{E}[ \mathds{1}_{\{S^{k_1}> \epsilon \}}]\mathbb{E}[ S^k]^{\frac{k_1}{k}}\label{thatof}
\end{align}
with $S = \sup_{0\leq u \leq T_0-s}\abs{X_{u(\kappa)}^{s,x} - r^{-1}X_{u(\kappa)}^{(r)}}$.
The final assertion follows by noting that the constants~$C$ above are independent of~$s$ in case of local in~$s$ Lyapunov functions.
\end{proof}

The following Assumption~\ref{A2lya} states our requirements on the higher derivatives of~$b$ and~$\sigma$ for results on the higher derivatives of solutions to~\eqref{sde}.

\begin{assumption}\label{A2lya}
There exist $p\in\mathbb{N}_0$ such that $b(t,\cdot)\vert_{\hat{O}},\sigma(t,\cdot)\vert_{\hat{O}}\in C^p$ for all~$t\geq 0$,~$\omega\in\Omega$ and inequality~\eqref{mdifffin} holds with~$m^*=p$ for all~$\omega\in\Omega$. Moreover, for all~$s\in[0,T]$ and~$k\geq 2$, there exist~$\hat{n}_k\in \mathbb{N}$, open~$\hat{O}_k\subset \mathbb{R}^{\hat{n}_k}$, a mapping~$\hat{x}_k: O\rightarrow \hat{O}_k$, a constant (in particular in~$s$)~$M'>0$ and Lyapunov function~$\hat{V}_k^{s,T}:\Omega\times [0,T-s]\times \hat{O}_k\rightarrow (0,\infty)$ 
satisfying for any~$x,x'\in O$ and multiindices~$\alpha$ with~$2\leq \abs{\alpha}\leq p$ that it holds~$\mathbb{P}$-a.s. that
\begin{align}
&\abs{\partial^{\alpha}b(s+t,\lambda X_t^{s,x}+(1-\lambda)X_t^{s,x'})} + \|\partial^{\alpha}\sigma(s+t,\lambda X_t^{s,x} + (1-\lambda)X_t^{s,x'})\|^2\nonumber\\
&\qquad \leq M'(1+ \hat{V}_k^{s,T}(t,\hat{x}_k(x)) + \hat{V}_k^{s,T}(t,\hat{x}_k(x')))^{\frac{1}{k}}\label{intermid}
\end{align}
for all~$t\in[0,T-s]$,~$\lambda\in [0,1]$.
\end{assumption}

Similar to Assumption~\ref{A1lya}, in Section~\ref{altassumps}, it is shown that if~$b,\sigma$ are independent of~$\omega,t$ and admit locally Lipschitz derivatives, then Assumption~\ref{A2lya} and in particular~\eqref{intermid} may be replaced by~$\abs{\partial^{\alpha}b(X_t^{s,x})}+\|\partial^{\alpha}\sigma(X_t^{s,x})\|^2\leq M'(1+\hat{V}^{s,T}(t,\hat{x}_k(x)))^{1/k}$.

In the following, for~$\kappa = (\kappa_i)_{1\leq i\leq l}$,~$\kappa_i\in\mathbb{R}^n$, the~$l^{\textrm{th}}$ order~$t$-uniform derivatives in probability of a process $Z_t^x$ with respect to initial condition $x$ in the directions~$\kappa_1,\dots,\kappa_l\in\mathbb{R}^n$ is denoted by~$\partial^{(\kappa)}Z_t^x$.
\begin{theorem}\label{seclem}
Under Assumptions~\ref{A1lya} and~\ref{A2lya}, 
for any~$s\in[0,T]$, constants~$1\leq l\leq p-1$,~$k_1>0$, 
there exist~$i^*\in\mathbb{N}$,~$\nu \geq\frac{k_1}{2}$,~$\{l_i\}_{i\in\{1,\dots,i^*\}}\subset (0,\infty)$ and a finite order polynomial~$q_0$, the degree of which is independent of
~$s,V_i,\hat{V}_k^{s,T}$, 
such that
\begin{align}
&\mathbb{E}\bigg[\sup_{0\leq t\leq T_0-s}\abs*{\partial^{(\kappa)}X_t^{s,x+r\kappa_{l+1}}-\partial^{(\kappa)}X_t^{s,x}}^{k_1}\bigg] \leq (T_0-s)^{\nu} q(x,x+r\kappa_{l+1}) \abs*{r}^{k_1}\label{kappar1}\\
&\mathbb{E}\bigg[\sup_{0\leq t\leq T_0-s}\abs*{\partial^{(\bar{\kappa})}X_t^{s,x}}^{k_1}\bigg] \leq (T_0-s)^{\nu}  q(x,x)\label{kappar2}\\
\lim_{r\rightarrow 0}&\mathbb{E}\bigg[\sup_{0\leq t\leq T_0-s}\abs*{\partial^{(\bar{\kappa})}X_t^{s,x} - r^{-1}(\partial^{(\kappa)}X_t^{s,x+r\kappa_{l+1}} - \partial^{(\kappa)}X_t^{s,x})}^{k_1}\bigg] = 0 \label{kappar3}
\end{align}
for all initial condition $x\in O$,~$T_0\in[s,T]$, $r\in\mathbb{R}\setminus\{0\}$,
~$\kappa_i\in\mathbb{R}^n$,~$\abs*{\kappa_i} =1$, $1\leq i\leq l+1$, $x+r\kappa_{l+1}\in O$, where~$\kappa = (\kappa_i)_{1\leq i\leq l}$,~$\bar{\kappa} = (\kappa_i)_{1\leq i\leq  l+1}$ and $q: O\times O\rightarrow\mathbb{R}$ is given by 
\begin{align}
q(y,y') &= \mathbb{E}[q_0((V_i(0,\bar{x}_i(y)))_{i\in I_0\cup I_0'},(\hat{V}_{l_i}^{s,T}(0,\hat{x}_{l_i}(y)))_{i\in\{1,\dots,i^*\}},\nonumber \\
&\qquad (\hat{V}_{l_i}^{s,T}(0,\hat{x}_{l_i}(y')))_{i\in\{1,\dots,i^*\}})].\label{qpoly}
\end{align}
If~$V_i$ and~$\hat{V}_k^{s,T}$ are local in~$s$ (as in Definition~\ref{locdef}) for every~$i,k$, then the form of the polynomial~$q_0$ is independent of~$s$.
\end{theorem}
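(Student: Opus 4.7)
The plan is to induct on $l$ from $1$ to $p-1$, with the base case $l=1$ argued in parallel to the proof of Lemma~\ref{firstlem}, applied to the system~\eqref{firsdd} for the first-order derivative process rather than to~\eqref{sde}. For general $l$, iterated formal differentiation of~\eqref{sde} shows that $\partial^{(\kappa)}X_t^{s,x}$ satisfies an SDE whose drift and diffusion split, via Fa\`a di Bruno, into a principal part $(\partial^{(\kappa)}X_t^{s,x}\cdot\nabla)b(s+t,X_t^{s,x})$ and $(\partial^{(\kappa)}X_t^{s,x}\cdot\nabla)\sigma(s+t,X_t^{s,x})$ that is linear in the highest-order derivative, plus a Fa\`a di Bruno forcing composed of $\partial^{\alpha}b(s+t,X_t^{s,x})$ and $\partial^{\alpha}\sigma(s+t,X_t^{s,x})$ with $2\leq\abs{\alpha}\leq l$ multiplied by products of strictly lower-order derivative processes. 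The inductive hypothesis provides arbitrary-moment control on these lower-order factors, Assumption~\ref{A2lya} bounds the derivative-of-coefficient factors by $(1+\hat V_k^{s,T})^{1/k}$ for any $k$ of my choosing, and Assumption~\ref{A1lya} bounds $\nabla b$ and $\nabla\sigma$ pointwise by $G(s+t,\cdot)$, which is $o(\log V)$.

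To prove~\eqref{kappar1}, I subtract the SDEs for $\partial^{(\kappa)}X_t^{s,x+r\kappa_{l+1}}$ and $\partial^{(\kappa)}X_t^{s,x}$ and, in each coefficient difference, perform a telescoping split. In the principal part this isolates a term $(\partial^{(\kappa)}X_t^{s,x+r\kappa_{l+1}}-\partial^{(\kappa)}X_t^{s,x})\cdot\nabla b(s+t,X_t^{s,x+r\kappa_{l+1}})$, and analogously for $\sigma$, which will play the role of the $\alpha_t$-multiplied term in Corollary~2.5 of~\cite{MR4260476}, and a remainder of the form $(\partial^{(\kappa)}X_t^{s,x})\cdot(\nabla b(s+t,X_t^{s,x+r\kappa_{l+1}})-\nabla b(s+t,X_t^{s,x}))$ which by the mean value theorem factors into a second derivative of $b$ along the segment between the two trajectories (controlled by~\eqref{intermid}) and $\abs{X_t^{s,x+r\kappa_{l+1}}-X_t^{s,x}}$ (which is linear in $\abs{r}$ in any moment by Lemma~\ref{firstlem}). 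Each Fa\`a di Bruno forcing difference is handled by the same telescoping, in each case producing a finite-difference factor that is either $\abs{X_t^{s,x+r\kappa_{l+1}}-X_t^{s,x}}$ or $\abs{\partial^{(\kappa')}X_t^{s,x+r\kappa_{l+1}}-\partial^{(\kappa')}X_t^{s,x}}$ for some $\kappa'$ of length strictly less than $l$, and hence linear in $\abs{r}$ in moments by the inductive hypothesis.

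The decomposition is then in the form required by Corollary~2.5 of~\cite{MR4260476} with $\alpha_t$ a constant multiple of $G(s+t,X_t^{s,x})+G(s+t,X_t^{s,x+r\kappa_{l+1}})$ and $\beta_t$ the aggregate forcing. A H\"older split of the resulting expectation decouples an exponential-of-$G$ moment, bounded by Lemma~\ref{ergodfin}, from a product of moments of $\hat V_{l_i}^{s,T}$ and of lower-order derivative processes, bounded respectively via Theorem~2.4 of~\cite{MR4260476} applied to each Lyapunov function (using~\eqref{lyapr2} and~\eqref{lyap3}) and via the inductive hypothesis together with Lemma~\ref{firstlem}. Choosing each $l_i$ large enough that the power $1/l_i$ leaves adequate room in H\"older for the remaining factors yields~\eqref{kappar1} with a polynomial $q_0$ of degree determined only by $l$ and $k_1$, exactly as claimed in~\eqref{qpoly}.

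The bound~\eqref{kappar2} is obtained by running the same argument directly on the SDE for $\partial^{(\bar\kappa)}X_t^{s,x}$: its principal part involves $\partial^{(\bar\kappa)}X_t^{s,x}$ itself and its Fa\`a di Bruno forcing consists of terms involving only derivatives of order at most $l$, already controlled by the inductive hypothesis and Lemma~\ref{firstlem}, so the stochastic Gr\"onwall step closes the bound without producing any $\abs{r}^{k_1}$ factor. Equation~\eqref{kappar3} then follows from~\eqref{kappar1} by the standard probabilistic argument~\eqref{thatof} from Lemma~\ref{firstlem}, once one identifies $r^{-1}(\partial^{(\kappa)}X_t^{s,x+r\kappa_{l+1}}-\partial^{(\kappa)}X_t^{s,x})$ as converging $t$-uniformly in probability to $\partial^{(\bar\kappa)}X_t^{s,x}$ by the definition of the higher-order derivative in probability. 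The locality-in-$s$ conclusion is a bookkeeping observation: every constant produced along the way originates in Lemma~\ref{ergodfin}, Lemma~\ref{firstlem}, or Corollary~2.5 of~\cite{MR4260476}, each of which is uniform in $s$ under the locality hypothesis. The hard part will be tracking via Fa\`a di Bruno that the polynomial $q_0$ depends only on $l$ and $k_1$ in its form, together with choosing the exponents $l_i$ consistently across all terms produced by the expansion so that the resulting H\"older exponents close simultaneously.
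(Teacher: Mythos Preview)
Your proposal is correct and follows essentially the same approach as the paper: strong induction on $l$, Fa\`a di Bruno expansion of the derivative SDE, a telescoping split isolating the principal linear term for use as $\alpha_t$ in Corollary~2.5 of~\cite{MR4260476}, Lemma~\ref{ergodfin} for the exponential-of-$G$ factor, Assumption~\ref{A2lya} plus the inductive hypothesis for the forcing, and~\eqref{thatof} for~\eqref{kappar3}. The only cosmetic difference is that the paper evaluates the principal $\nabla b$, $\nabla\sigma$ at $X_t^{s,x}$ rather than $X_t^{s,x+r\kappa_{l+1}}$, so its $\alpha_t$ involves $G$ at a single point; either choice works.
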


\begin{proof}
Fix~$k_1>0
$,~$s\in[0,T]$, let $J$ be the set of strictly increasing functions from $\mathbb{N}$ to itself and $D^{(\kappa)}b(s+t,X_t^{s,x})$ denote the formal derivative of $b(s+t,X_t^{s,x})$ with respect to $x$ in the directions indicated by $\kappa$. In particular,
\begin{align*}
D^{(\kappa)}b(s+t,X_t^{s,x}) &= \Big(\partial^{(\kappa)}X_t^x\cdot\nabla\Big) b(s+t,X_t^{s,x})\\
&\quad + 
q_{b,X_t^{s,x}}\bigg(\bigg(\prod_{1\leq i\leq l'}\partial^{(\kappa_{j(i)})}\bigg)X_t^{s,x} ,1\leq l'\leq l-1, j\in J\bigg),
\end{align*}
where the last term denotes a $\mathbb{R}^n$-valued polynomial taking arguments as indicated, for which exactly $l$ of the operators $\partial^{(\kappa_i)}$ appear in each term and coefficients are spatial derivatives between orders $2$ and $l$ of elements of $b$ evaluated at $(s+t,X_t^{s,x})$. 
In the same way, let~$D^{(\kappa)}\sigma(s+t,X_t^{s,x})$ denote the formal derivative of~$\sigma(s+t,X_t^{s,x})$ with respect to~$x$ in the directions indicated by~$\kappa$, again with the form above but with~$b$ replaced by~$\sigma$ everywhere. 
Denoting $x' = x+r\kappa_{l+1}$, the difference processes of the derivatives satisfy
\begin{align*}
d(\partial^{(\kappa)}X_t^{s,x'} - \partial^{(\kappa)} X_t^{s,x}) &= (D^{(\kappa)}b(s+t,X_t^{s,x'}) - D^{(\kappa)}b(s+t,X_t^{s,x}) )dt\\
&\quad + (D^{(\kappa)}\sigma(s+t,X_t^{s,x'}) - D^{(\kappa)}\sigma(s+t,X_t^{s,x}) )dW_t
\end{align*}
on~$t\in[0,T-s]$ for all~$x,x'\in O$,~$r\in\mathbb{R}\setminus\{ 0\}$,~$\kappa_i\in\mathbb{R}^n$,~$\abs*{\kappa_i} = 1$,~$1\leq i\leq l+1$.

We proceed by strong induction in $l$ for~\eqref{kappar1}. A base case has been established in Lemma~\ref{firstlem}. By the fundamental theorem of calculus on derivatives of~$b$ and~$\sigma$, inequalities~\eqref{intermid},~\eqref{a3} and~\eqref{a4}, 
it holds~$\mathbb{P}$-a.s. that
\begin{align*}
&\abs*{D^{(\kappa)}b(s+t,X_t^{s,x'}) - D^{(\kappa)}b(s+t,X_t^{s,x}) }\\
&\quad\leq \sum_i \abs*{(\partial^{(\kappa)} X_t^{s,x'} - \partial^{(\kappa)} X_t^{s,x})_i}  \abs*{\partial_i b(s+t,X_t^{s,x})} + H(t,X_t^{s,x},X_t^{s,x'})\hat{q}_t\\
&\quad\leq 2\abs*{\partial^{(\kappa)} X_t^{s,x'} - \partial^{(\kappa)} X_t^{s,x}} \ G(s+t,X_t^{s,x}) + H(t,X_t^{s,x},X_t^{s,x'})\hat{q}_t,\\
&\|D^{(\kappa)}\sigma(s+t,X_t^{s,x'}) - D^{(\kappa)}\sigma(s+t,X_t^{s,x}) \|^2 \\
&\quad\leq 2\sum_i \abs*{(\partial^{(\kappa)} X_t^{s,x'} - \partial^{(\kappa)} X_t^{s,x})_i}^2\|\partial_i \sigma(s+t,X_t^{s,x})\|^2 + (H(t,X_t^{s,x},X_t^{s,x'})\hat{q}_t)^2\\
&\quad\leq 4\abs*{\partial^{(\kappa)} X_t^{s,x'} - \partial^{(\kappa)} X_t^{s,x}}^2 G(s+t,X_t^{s,x}) + (H(t,X_t^{s,x},X_t^{s,x'})\hat{q}_t)^2,
\end{align*}
on~$t\in[0,T-s]$, where 
\begin{align}
H(t,X_t^{s,x},X_t^{s,x'}) &= M'\Big(1+ \hat{V}_{4k_1\vee 4}^{s,T}(t,\hat{x}_{4k_1\vee 4}(x)) + \hat{V}_{4k_1\vee 4}^{s,T}(t,\hat{x}_{4k_1\vee 4}(x'))\Big)^{\frac{1}{4k_1\vee 4}}\label{hxx}
\end{align}
and $\hat{q}_t$ denotes a polynomial with constant coefficients taking arguments from the set $S = S_1\cup S_2$, 
\begin{align*}
S_1&= \bigg\{\bigg\vert \bigg(\prod_{1\leq i \leq l'} \partial^{(\kappa_{j(i)})}\bigg) X_t\bigg\vert : 1\leq l' \leq l,j\in J, X_t\in\{X_t^{s,x'},X_t^{s,x}\}\bigg\} \\
S_2&= \bigg\{\bigg\vert\bigg(\prod_{1\leq i\leq l'}\partial^{(\kappa_{j(i)})}\bigg) (X_t^{s,x'}- X_t^{s,x})\bigg\vert:1\leq l'\leq l-1,j\in J\bigg\}\\
&\qquad \cup \{ \lvert X_t^{s,x'} - X_t^{s,x}\rvert \}, 
\end{align*}
for which exactly $l$ of the operators $\partial^{(\kappa_i)}$ appear in each term of $\hat{q}_s$ and a factor from $S_2$ appears exactly once in each term. 
Note for $p\geq 2$ and by Lemma~\ref{ergodfin}, it holds 
$\mathbb{P}$-a.s. that
\begin{align*}
&\int_0^{T-s}\log V_i(t,\bar{x}_i(x)) dt + \log V_{i'} (t,\bar{x}_{i'}(x)) \\
&\quad< \int_0^{T-s} V_i(t,\bar{x}_i(x)) dt + V_{i'}(t,\bar{x}_{i'}(x)) < \infty
\end{align*}
on~$t\in[0,T-s]$ for all~$i\in I_0$ and $i'\in I_0'$. 
Corollary~\ref{huddecor1} can then be applied with
\begin{align}
\hat{b}_t &= D^{(\kappa)}b(s+t,X_t^{s,x'}) - D^{(\kappa)}b(s+t,X_t^{s,x}),\nonumber\\
\hat{\sigma}_t &= D^{(\kappa)}\sigma(s+t,X_t^{s,x'}) - D^{(\kappa)}\sigma(s+t,X_t^{s,x}),\nonumber\\
\hat{\alpha}_t &= 
4(2k_1\vee2) G(s+t,X_t^{s,x}) + \frac{1}{2}>0,\label{thisalpha2}\\
\hat{\beta}_t &= \sqrt{4k_1\vee 4}H(t,X_t^{s,x},X_t^{s,x'})\hat{q}_t,\nonumber\\
p^* &= 4k_1 \vee 4,\ q_1 = k_1,\ q_2 = \bigg(\frac{1}{k_1} - \frac{1}{2k_1\vee 2}\bigg)^{-1},\ q_3 = 2k_1\vee 2\nonumber
\end{align}
to obtain
\begin{equation*}
\mathbb{E}\bigg[\sup_{0\leq t\leq T_0-s} \abs*{\partial^{(\kappa)}X_t^{s,x'} - \partial^{(\kappa)}X_t^{s,x}}^{k_1}\bigg]\leq CA_{T_0-s}^{(1)}A_{T_0-s}^{(2)},
\end{equation*}
where, using the notation~$q_2,q_3$ above,
\begin{align*}
A_{T_0-s}^{(1)} &:= \bigg(\mathbb{E}\bigg[\exp\bigg(q_2\int_0^{T_0-s}\bigg(4(1+2k_1\vee 2)G(s+u,X_u^{s,x}) + \frac{1}{2}\bigg)du\bigg)\bigg) \bigg]\bigg)^{\frac{k_1}{q_2}} \\
A_{T_0-s}^{(2)} &:= \bigg(\mathbb{E}\bigg[\bigg( \int_0^{T_0-s} 2(1+2k_1\vee 4) (H(u,X_u^{s,x},X_u^{s,x'}) \hat{q}_u)^2du\bigg)^{\frac{q_3}{2}}\bigg]\bigg)^{\frac{k_1}{q_3}}.
\end{align*}
Setting
\begin{equation*}
m = \frac{1}{8q_2(1+2k_1\vee2)((T_0-s)\vee 1)(\abs{I_0}+\abs{I_0'})}
,
\end{equation*}
with the effect that~$M(m)$ is bounded in~$T_0\in[s,T]$, 
and using Assumption~\ref{A1lya} as well as (the arguments in the proof of) Lemma~\ref{ergodfin}, the first expectation has the bound
\begin{align*}
A_{T_0-s}^{(1)} &\leq \bigg(\mathbb{E}\bigg[\frac{C}{T_0-s}\int_0^{T_0-s} \exp\bigg( 8q_2(1+2k_1\vee2)m \bigg((T_0-s)\sum_{i'\in I_0'}\log V_{i'}(u,\bar{x}_{i'}(x))\\
&\quad + \sum_{i\in I_0} \log V_i(T_0-s,\bar{x}_i(x)) \bigg)\bigg) du\bigg]\bigg)^{\frac{k_1}{q_2}} \\
&\leq \bigg( \mathbb{E}\bigg[ \frac{C}{T_0-s}\int_0^{T_0-s}  \bigg( 1+\!\!\sum_{i'\in I_0'}\!V_{i'}(u,\bar{x}_{i'}(x)) +\!\! \sum_{i\in I_0} V_i(T_0-s,\bar{x}_i(x)) \bigg)du \bigg]\bigg)^{\frac{k_1}{q_2}}\\
&\leq C\bigg(1+\sum_{i'\in I_0'}\mathbb{E}[V_{i'}(0,\bar{x}_{i'}(x))] + \sum_{i\in I_0} \mathbb{E}[V_i(0,\bar{x}_i(x))] \bigg)^{\frac{k_1}{q_2}},
\end{align*}
where~$C$ is, here and in the rest of the proof, bounded as a function of~$T_0\in[s,T]$ and also of~$s$ if~$V_i$ is local in~$s$ for all~$i\in I_0\cup I_0'$. 
On the other hand, by the inductive argument and the form of~$H$,~$\hat{q}_s$ and~$q_3$, it holds that
\begin{align*}
A_{T_0-s}^{(2)} &\leq C\bigg(\mathbb{E} \bigg[\bigg(\int_0^{T_0-s} H(u,X_u^{s,x},X_u^{s,x'})^2du\bigg)^{k_1\vee 1}\sup_{0\leq u\leq T_0-s}\hat{q}_u^{2k_1\vee 2}\bigg]\bigg)^{\frac{k_1}{2k_1\vee2}}\\
&\leq C\bigg( \mathbb{E}\bigg[\bigg(\int_0^{T_0-s} H(u,X_u^{s,x},X_u^{s,x'})^2du\bigg)^{2k_1\vee 2}\bigg] \bigg)^{\frac{k_1}{4k_1\vee 4}} \\
&\quad \cdot \bigg(\mathbb{E}\bigg[\sup_{0\leq u\leq T_0-s}\hat{q}_u^{4k_1\vee 4}\bigg]\bigg)^{\frac{k_1}{4k_1\vee 4}}\\
&\leq C\bigg((T_0-s)^{(2k_1\vee 2)-1}\int_0^{T_0-s}\mathbb{E}[1+\hat{V}_{4k_1\vee 4}^{s,T}(u,\hat{x}_{4k_1\vee 4}(x))\\
&\qquad + \hat{V}_{4k_1\vee 4}^{s,T}(u,\hat{x}_{4k_1\vee 4}(x')) ]du \bigg)^{\frac{k_1}{4k_1\vee 4}} 
 \tilde{q}(x,x')\abs*{r}^{k_1},
\end{align*}
where
\begin{align*}
\tilde{q}(x,x') &= \mathbb{E}[\tilde{q}_0((V_i(0,\bar{x}_i(x)))_{i\in I_0\cup I_0'},(\hat{V}_{l_i}^{s,T}(0,\hat{x}_{l_i}(x)))_{i\in\{1,\dots,\hat{i}^*\}},\\
&\qquad (\hat{V}_{l_i}^{s,T}(0,\hat{x}_{l_i}(x')))_{i\in\{1,\dots,\hat{i}^*\}})]
\end{align*}
for some $\hat{i}^*\in\mathbb{N}$, $\{l_i\}_{i\in\{1,\dots,\hat{i}*\}}\subset (0,\infty)$ and finite order polynomial~$\tilde{q}_0$ taking arguments as indicated. 
Therefore, by Corollary~\ref{huddecor0} 
with~$q_1 = 1$, it holds that
\begin{align*}
A_{T_0-s}^{(2)} &\leq C \Big( (T_0-s)^{(2k_1\vee 2)} (1 + \mathbb{E}[\hat{V}_{4k_1\vee 4}^{s,T}(0,\hat{x}_{4k_1 \vee 4}(x))]\\
&\quad + \mathbb{E}[\hat{V}_{4k_1\vee 4}^{s,T}(0,\hat{x}_{4k_1 \vee 4}(x'))]) \Big)^{\frac{k_1}{4k_1\vee 4}} 
\tilde{q}(x,x')\abs*{r}^{k_1},
\end{align*}
which concludes the proof for~\eqref{kappar1}. 
Inequality~\eqref{kappar2} follows along the same lines, therefore the argument is not repeated. Equation~\eqref{kappar3} holds by~\eqref{thatof} with
\begin{equation*}
S = \sup_{0\leq u \leq t} \abs*{\partial^{(\bar{\kappa})}X_u^{s,x} - r^{-1}(\partial^{(\kappa)}X_u^{s,x'} - \partial^{(\kappa)}X_u^{s,x})}. \tag*{\textrm{\qedhere}}
\end{equation*}
\end{proof}

\begin{remark}\label{replacingrem}
A way to prove weaker versions of Lemma~\ref{firstlem} and Theorem~\ref{seclem} is instead of using the stochastic Gr\"{o}nwall inequality, that is, 
Proposition~\ref{huddethm}, 
to use Lemma~4.2 in~\cite{MR1731794} and Theorem~3.5 in \cite{MR2894052}. For this, one works directly with the SDEs governing~$\abs{\partial^{(\kappa)}X_t}^{k_1}$ in the proof and inequality~\eqref{lyapr2} is to be replaced by~$(\partial_t + L)V_0 \leq C V_0$. 
\end{remark}

\section{Kolmogorov equations}\label{kollya}

Throughout this section, we assume that~$b$ and~$\sigma$ are nonrandom functions. In Section~\ref{semdiff}, the moment estimates from Section~\ref{momentestimates} are used to derive~$p^\textrm{th}$ differentiability of a Feynman-Kac semigroup~\eqref{basicu}. 
The functions~$f,c,g$ appearing in~\eqref{basicu} are only required to be bounded by Lyapunov functions. Although the results and many details in the proofs are new, the approach is from~\cite{MR1731794}, in which~$f,c,g$ and their derivatives are only required to be polynomially bounded. This regularity is then used to show that the semigroup solves the Kolmogorov equation in the almost everywhere sense in Section~\ref{tdkol}. 

\subsection{Semigroup differentiability}\label{semdiff}


A condition that will be imposed on the derivatives of~$f,c,g$ is first stated. 
\begin{definition}\label{lyadiff}
For~$p\in \mathbb{N}$,~$k>1$,~$h:\Omega\times[0,T]\times\hat{O}\rightarrow \mathbb{R}$ satisfying~$\mathbb{P}$-a.s. that~$h(t,\cdot)\in C^p(\hat{O})$ for all~$t\in[0,T]$, we say that~$h$ has~$(p,k)$-Lyapunov derivatives if there exist~$(V^{s,T})_{s\in[0,T]}$ local in~$s$ (as in Definition~\ref{locdef}), locally bounded\footnote{The domain of~$\tilde{x}$ is~$O$, but its codomain is unspecified, except that it is equal to the domain of~$V^{s,T}$.}~$\tilde{x}$ and constant~$N>0$ such that for any~$s\in[0,T]$ and multiindices~$\alpha$ with~$0\leq \abs{\alpha}\leq p$, it holds~$\mathbb{P}$-a.s. that
\begin{equation}\label{gbylya}
\abs{\partial^{\alpha}h(s+t,\lambda X_t^{s,x} + (1-\lambda)X_t^{s,x'})} \leq N(1+V^{s,T}(t,\tilde{x}(x) ) + V^{s,T}(t,\tilde{x}(x')))^{\frac{1}{k}}
\end{equation}
for all stopping times~$t\leq T-s$,~$x,x'\in O$ and~$\lambda\in[0,1]$.
\end{definition}
Similar to Assumptions~\ref{A1lya},~\ref{A2lya}, an alternative sense of Lyapunov derivatives is given in Section~\ref{altassumps}, where~\eqref{gbylya} is replaced by~$\abs{\partial^{\alpha}h(s+t,X_t^{s,x})}\leq N(1+V^{s,T}(t,\tilde{x}(x)))^{1/k}$. This will be used in Section~\ref{altassumps} along with alternative assumptions to Assumptions~\ref{A1lya},~\ref{A2lya} in order to obtain results on Kolmogorov equations similar to the ones obtained in the present section.
\begin{assumption}\label{Afcg}
The functions~$f:[0,T]\times\mathbb{R}^n\rightarrow\mathbb{R},c:[0,T]\times\mathbb{R}^n\rightarrow(0,\infty),g:\mathbb{R}^n\rightarrow\mathbb{R}$ satisfy that 
\begin{enumerate}
\item for all~$R>0$,~$h\in\{f,c,g\}$, there exists~$C_R>0$ such that~$\abs{h(t,x)-h(t,x')}\leq C_R\abs{x-x'}$ for all~$t\in[0,T]$,~$x,x'\in B_R$, 
\item for~$h\in\{f,g\}$, if~$(p,k)$ is such that~$h$ has~$(p,k)$-Lyapunov derivatives, then there exist Lyapunov functions~$V^{s,T}$ and mapping~$\tilde{x}$ satisfying the condition in Definition~\ref{lyadiff} such that there exists~$C>0$ such that for any~$s\in[0,T]$, it holds~$\mathbb{P}$-a.s. that
\begin{equation}\label{maiscu}
V^{s+\tau,T}(0,\tilde{x}(X_{\tau}^{s,x}))^{\frac{1}{k}}\leq C (1+V^{s,T}(\tau,\tilde{x}(x)))
\end{equation}
for all~$x\in\mathbb{R}^n$ and stopping times~$\tau\leq T-s$.
\end{enumerate}
\end{assumption}
In addition, the following assumption will be made for our assertions about the Kolmogorov equation.
\begin{assumption}\label{A3lya}
The functions~$b,\sigma$ are nonrandom and~$O=\mathbb{R}^n$. For each $R\geq 0$, there exists a Borel, locally integrable $K_{\cdot}(R): [0,\infty)\rightarrow [0,\infty)$ such that
\begin{equation*}
2\langle x-y,b(t,x)-b(t,y)\rangle + \| \sigma(t,x) - \sigma(t,y) \|^2 \leq K_t(R) \abs*{x-y}^2\\
\end{equation*}
for all~$t\geq 0$,~$x,y\in B_R$. 
For any $s \geq 0$, $T>0$, $x\in \mathbb{R}^n$, there exists a $\mathbb{P}$-a.s. continuous $\mathbb{R}^n$-valued unique up-to-indistinguishability solution~$X_t^{s,x}$ to~\eqref{sde} 
on $[0,T]$. Moreover, for any~$T>0$, 
there exist~$\tilde{n}\in\mathbb{N}$, open~$\tilde{O}\subseteq \mathbb{R}^{\tilde{n}}$,
~$V_0\in C^{1,2}([0,\infty)\times\tilde{O})$, $\tilde{x}:\mathbb{R}^n\rightarrow\tilde{O}$, $\hat{G}:[0,\infty)\times \mathbb{R}^n\rightarrow\mathbb{R}$, constant~$C\geq 0$ 
and~$k\geq 1$
such that
 
\begin{enumerate}[label=(\roman*)]
\item there exists a family of Lyapunov functions~$(V^{s,T}:\Omega\times[0,T]\times\tilde{O}\rightarrow(0,\infty))_{s\in[0,T]}$ that is~$(\tilde{n},\tilde{O},V_0)$-local in~$s$ (as in Definition~\ref{locdef}),
\item for any $s\geq 0$, it holds~$\mathbb{P}$-a.s. that~\eqref{maiscu} holds 
for all~$x\in \mathbb{R}^n$ and stopping times~$\tau\leq T$,
\item \label{Ginf} for any $s\geq 0$, it holds that~$\lim_{\abs*{x}\rightarrow \infty}\inf_{t\in[0,T]}\hat{G}(t,x) = \infty$ and $\mathbb{P}$-a.s. that
\begin{equation*}
\hat{G}(s+t,X_t^{s,x}) \leq V^{s,T}(t,\tilde{x}(x))
\end{equation*}
for all $t\in[0,T]$, $x\in \mathbb{R}^n$.
\end{enumerate}
\end{assumption}

\begin{remark}\label{lyarem}
The parts about~$V^{s,T}$ in Assumptions~\ref{A3lya},~\ref{Afcg} are satisfied by the Lyapunov functions considered for example in~\cite[Corollary~2.4]{https://doi.org/10.48550/arxiv.1309.5595}. More specifically, taking~$\alpha$ and the functions~$U$,~$\bar{U}$ from there, for~$\tilde{n} = n+1$, one may take~$V_0 = V_0(t,(x,y)) = e^{U(x)e^{-\alpha t} + y}$ and~$\tilde{x}=\tilde{x}(x)= (x,0)\in\mathbb{R}^{n+1}$, then
\begin{align*}
\tilde{b}(t,(x,y)) &= (b(t,x),\bar{U}(t,x)), \quad \tilde{\sigma}(t,(x,y)) = \begin{pmatrix}\sigma(t,x) &0\\ 0 &0\end{pmatrix},\\
\hat{G}(t,x) &= e^{U(x)e^{-\alpha t}}
\end{align*}
for~$t\geq 0$,~$x\in\mathbb{R}^n$,~$y\in\mathbb{R}$ and the latter statements of Assumption~\ref{A3lya},~\ref{Afcg} are satisfied given the conditions on~$U$ and~$\bar{U}$ if~$\lim_{\abs*{x}\rightarrow\infty} U(x) = \infty$ and~$\bar{U}\geq 0$ for some~$C\in\mathbb{R}$ everywhere.
\end{remark}

\begin{theorem}\label{fk2}
Let~$b,\sigma$ be nonrandom, let Assumptions~\ref{A1lya},~\ref{A2lya} hold and let~$f:\Omega\times[0,T]\times\mathbb{R}^n\rightarrow\mathbb{R}$,~$c:\Omega\times[0,T]\times\mathbb{R}^n\rightarrow [0,\infty)$,~$g:\Omega\times\mathbb{R}^n\rightarrow\mathbb{R}$ be such that~$f(\cdot,x),c(\cdot,x)$ are~$\mathcal{F}\otimes\mathcal{B}([0,T])$-measurable functions for every~$x\in \hat{O}$, satisfying for any~$\omega\in\Omega$ that~$\int_0^T \sup_{x\in B_R\cap \hat{O}}(\abs{c(t,x)} + \abs{f(t,x)})dt <\infty$ for every~$R>0$ and~$f(t,\cdot)\vert_{\hat{O}},c(t,\cdot)\vert_{\hat{O}},g\vert_{\hat{O}}\in C^p(\hat{O})$ for all~$t\in[0,T]$. 
Assume there exists~$k_2>1$ such that~$f$ and~$g$ have~$(p,k_2)$-Lyapunov derivatives. There exists~$K>1$ such that if for any~$1<k'<K$,
~$c$ has~$(p,k')$-Lyapunov derivatives, 
then the following statements hold.
\begin{enumerate}[label=(\roman*)]
\item\label{tstwi} For~$u$ given by
\begin{align}
u(s,t,x) &= \int_0^t f(s+r,X_r^{s,x}) e^{-\int_0^r c(s+w,X_w^{s,x}) dw} dr \nonumber \\
&\qquad+ g(X_t^{s,x})e^{-\int_0^t c(s+w,X_w^{s,x})dw}\label{etstw}
\end{align}
defined for $(s,x)\in [0,T]\times O$ and stopping times~$t\leq T-s$, the expectation~$\mathbb{E}[u(s,t,x)]$ is continuously differentiable in~$x$ up to order~$p$.
\item\label{tstwii} For every multiindex~$\beta$ with~$0\leq \abs{\beta}\leq p$, there exists a finite order polynomial~$q^*$, the degree of which is independent of all of the Lyapunov functions in Assumptions~\ref{A1lya},~\ref{A2lya} and of the Lyapunov derivatives, 
such that for~$(s,x)\in[0, T]\times O$ and all stopping times~$t\leq T-s$, it holds that
\begin{align}
\abs{\partial^{\beta}_x \mathbb{E}[u(s,t,x)] }&\leq \mathbb{E}[q^*((V_i(0,\bar{x}_i(x)))_{i\in I_0\cup I_0'},V^{s,T}(0,\tilde{x}(x)), \nonumber\\
&\qquad(\hat{V}_{l_i}^{s,T}(0,\hat{x}_{l_i}(x)))_{i\in I^*} )],\label{tstweq}
\end{align}
where~$I^*\subset\mathbb{N}$ is finite,~$l_i >0$ and~$\tilde{x}$,
~$V^{s,T}$
associated to the~$(\abs{\beta},k_2)$-Lyapunov derivatives of~$f,g$ and the~$(\abs{\beta},k')$-Lyapunov derivatives of~$c$ are representative across any and all of~$\{f,c,g\}$ 
and~$k'\in K_0\subset (1,K)$ for some finite~$K_0$. 
\item\label{tstwiii} 
Let Assumptions~\ref{A3lya} hold. 
Suppose~$f,c,g$ are nonrandom and that they satisfy Assumption~\ref{Afcg}. 
Suppose the families of Lyapunov functions in Assumptions~\ref{A1lya},~\ref{A2lya},~\ref{Afcg}  and for the Lyapunov derivatives of~$c$ 
are local in~$s$ (as in Definition~\ref{locdef}). For any multiindex~$\alpha$ with~$0\leq \abs{\alpha}\leq p$, the function~$\Delta_T\times \mathbb{R}^n\ni((s,t),x)\mapsto\abs{\partial^{\alpha}_x \mathbb{E}[u(s,t,x)]}$ is locally bounded 
and if~$p\geq 2$, then for any~$R>0$, there exists a constant~$N>0$ such that
\begin{equation}\label{cop1}
\abs{\mathbb{E}[u(s',T-s',x)] - \mathbb{E}[u(s,T-s,x)}] \leq N\abs{s'-s}
\end{equation}
for all~$s,s'\in(0,T)$ and~$x\in B_R$.
\end{enumerate}
\end{theorem}
We prove first a lemma that will used in the proof of Theorem~\ref{fk2}. Throughout the proofs of Theorem~\ref{fk2} and of Lemma~\ref{lemtstw} and consistent with the statement of the results, we omit in the notation any dependence of~$V^{s,T}$,~$\tilde{x}$ and~$k_2$ on~$k$ and~$h$. 
\begin{lemma}\label{lemtstw}
Let the assumptions of Theorem~\ref{fk2} hold. Suppose there exists~$\hat{k}_2>1$ such that~$c$ has~$(p,\hat{k}_2)$-Lyapunov derivatives. 
For any~$h\in\{f,c,g\}$,~$k_3 >0$,~$s\in[0,T]$,~$x\in O$,~$\kappa\in\mathbb{R}^n$,~$\lambda'\in[0,1]$, multiindex~$\alpha$ and stopping time~$t\leq T-s$, such that~$k_3 < k_2$ if~$h\in\{f,g\}$ and~$k_3 < \hat{k}_2$ if~$h = c$ as well as~$\abs*{\kappa} = 1$,~$0\leq \abs{\alpha}\leq p$, it holds that
\begin{align}
&\mathbb{E}\bigg[\int_0^t\abs*{\partial^{\alpha} h(s+r,\lambda' X_r^{s,x'} + (1-\lambda')X_r^{s,x}) - \partial^{\alpha} h(s+r,X_r^{s,x})}^{k_3} dr \bigg]\rightarrow 0\label{ftstw}\\
&\mathbb{E}\bigg[\bigg\vert\int_0^1 \partial^{\alpha} h(s+t,\lambda X_t^{s,x'} + (1-\lambda)X_t^{s,x}) d\lambda - \partial^{\alpha} h(s+t, X_t^{s,x}) \bigg\vert^{k_3}\bigg] \rightarrow 0\nonumber\\
&\mathbb{E}\bigg[\int_0^t\abs*{\int_0^1 \partial^{\alpha} h(s+r,\lambda X_r^{s,x'} + (1-\lambda)X_r^{s,x}) d\lambda - \partial^{\alpha} h(s+r, X_r^{s,x}) }^{k_3} dr\bigg] \rightarrow 0\nonumber
\end{align}
as~$x'\rightarrow x$, where the derivatives~$\partial^{\alpha}$ are in the spatial argument and~$g(t,\cdot) = g$.
\end{lemma}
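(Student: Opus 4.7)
The plan is to establish all three convergences as consequences of the Vitali convergence theorem, exploiting continuity of the flow $x \mapsto X_\cdot^{s,x}$ in probability for pointwise convergence and the Lyapunov derivative hypotheses for uniform integrability in $L^{k_4}$ for some $k_4 > k_3$.

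First I would prove pointwise convergence: applying Lemma~\ref{firstlem} along $r = \abs{x'-x} \to 0$ with $\kappa = (x'-x)/\abs{x'-x}$ gives $\sup_{v \in [0,T-s]} \abs{X_v^{s,x'} - X_v^{s,x}} \to 0$ in $L^k(\mathbb{P})$ for every $k>0$, and hence in probability. Together with continuity of $\partial^\alpha h(s+v,\cdot)$ this yields convergence to zero in probability of each of the three integrands, uniformly in $v \in [0,T-s]$ and $\lambda, \lambda' \in [0,1]$. Next I would secure uniform integrability by fixing $k_4 \in (k_3, k_*)$, where $k_* = k_2$ if $h \in \{f,g\}$ and $k_* \in (k_3, K)$ is chosen so that $c$ has $(p,k_*)$-Lyapunov derivatives if $h = c$. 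Inequality~\eqref{gbylya} combined with Jensen's inequality (using $k_4/k_* < 1$) yields
\begin{equation*}
\abs*{\partial^\alpha h(s+v, \lambda' X_v^{s,x'} + (1-\lambda') X_v^{s,x})}^{k_4} \leq C\bigl(1 + V^{s,T}(v,\tilde{x}(x))^{k_4/k_*} + V^{s,T}(v,\tilde{x}(x'))^{k_4/k_*}\bigr)
\end{equation*}
with $C$ independent of $\lambda'$ and $x'$. Taking expectations and applying Theorem~2.4 of~\cite{MR4260476} to the Lyapunov function $V^{s,T}$ (whose parameters satisfy \eqref{lyap3}) bounds $\sup_{v \in [0,T-s]}\mathbb{E} V^{s,T}(v,\tilde{x}(x'))$ by a constant multiple of $1 + V^{s,T}(0,\tilde{x}(x'))$; since $V^{s,T}(0,\tilde{x}(\cdot))$ is the continuous deterministic function $\tilde V_0(0, \tilde{x}(\cdot))$ from Definition~\ref{rlyadef} and $\tilde{x}$ is locally bounded, this is uniformly bounded for $x'$ in a small neighbourhood of $x$.

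Finally I would combine the two ingredients by Vitali's theorem on $(\Omega \times [0,T-s], \mathbb{P} \otimes dv)$ for the first and third statements and on $(\Omega, \mathbb{P})$ for the second. For the second and third statements I would first apply Jensen's inequality to bring the $k_3$-th power inside the $\int_0^1(\cdot)d\lambda$ and then Fubini to exchange the $\lambda$-integral with the expectation, reducing everything to a $\lambda$-uniform version of the first statement. The main technical obstacle is ensuring that the uniform integrability constant does not blow up as $x' \to x$, which is precisely what the locality in $s$ of $V^{s,T}$ and the continuity of its initial datum $\tilde V_0$, combined with continuity of $\tilde{x}$, guarantee; everything else is a routine application of Vitali and Jensen.
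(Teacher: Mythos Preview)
Your proof is correct and follows essentially the same approach as the paper. The paper obtains convergence in probability of $X_\cdot^{s,x'}-X_\cdot^{s,x}$ via Theorem~1.7 of~\cite{MR1731794} rather than Lemma~\ref{firstlem}, and in place of invoking Vitali explicitly it uses the inequality~\eqref{thatof} (convergence in probability plus a higher moment bound) to pass from convergence in probability to $L^{k_3}$ convergence, then dominated convergence in $v$ and $\lambda$ to conclude; these are equivalent mechanisms. One small slip: you write ``continuity of $\tilde{x}$'' at the end, but $\tilde{x}$ is only assumed locally bounded in Definition~\ref{lyadiff}; this is what you correctly used earlier and is all that is needed for the uniform bound.
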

\begin{proof}
For any~$\epsilon>0$,~$s\in[0,T]$ and stopping time~$t\leq T-s$, note that
\begin{equation*}
\mathbb{P}\bigg(\sup_{0\leq u\leq T-s}\abs{X_u^{s,x'} - X_u^{s,x}}\leq\epsilon\bigg)\leq \mathbb{P}(\abs{X_t^{s,x'} - X_t^{s,x}}\leq\epsilon),
\end{equation*}
so that for any~$\lambda\in[0,1]$, by Theorem~1.7 in~\cite{MR1731794}, it holds that~$\lambda X_t^{s,x'} + (1-\lambda)X_t^{s,x} - X_t^{s,x} = \lambda (X_t^{s,x'} - X_t^{s,x}) \rightarrow 0$ in probability as~$x'\rightarrow x$ (sequentially). Therefore for any multiindex~$\alpha$,~$\hat{J} := \partial^{\alpha} h(s+t,\lambda X_t^{s,x'} + (1-\lambda)X_t^{s,x}) - \partial^{\alpha} h(s+t,X_t^{s,x})\rightarrow 0$ in probability by Theorem~20.5 in~\cite{MR2893652}. Moreover, 
by~\eqref{gbylya} and Corollary~\ref{huddecor0}, 
it holds that
\begin{equation*}
\mathbb{E}[\vert\partial^{\alpha} h(s+t,\lambda X_t^{s,x'} + (1-\lambda)X_t^{s,x})\vert^{k_3}]\! \leq C\mathbb{E}[1+V^{s,T}(0,\tilde{x}(x)) + V^{s,T}(0,\tilde{x}(x'))],
\end{equation*}
so that alongside~\eqref{thatof} with~$k_1 = k_3$,~$k = k_2$ or~$k = \hat{k}_2$ and~$S = \abs{\hat{J}}$, one obtains~$\mathbb{E}[\abs{\hat{J}}^{k_3}] \rightarrow 0$ 
as~$x'\rightarrow x$. 
Since~$C$ here is independent of~$t$ and~$\lambda$, Jensen's inequality, Fubini's and dominated convergence theorem concludes the proof.
\end{proof}

\begin{proof}[Proof of Theorem~\ref{fk2}]
For~$x\in O$,~$s\in[0,T]$, stopping time~$t\leq T-s$,
~$\kappa\in\mathbb{R}^n$,~$r\in\mathbb{R}\setminus\{0\}$,~$\abs*{\kappa}=1$, let~$x':=x+r\kappa\in O$ and for~$h\in\{f,c,g\}$, denote
\begin{align*}
h_t' &:= \int_0^1 \nabla h(s+t,\lambda X_t^{s,x'} + (1-\lambda) X_t^{s,x}) d\lambda,\\
\tilde{h}(t,x) &:= h(s+t,X_t^{s,x}),\\ 
c_t^{\dagger} &:=\int_0^1e^{-\lambda\int_0^t c(s+u,X_u^{s,x'})du - (1-\lambda)\int_0^t c(s+u,X_u^{s,x})du}d\lambda,\\
\hat{c}(t,x)&:=e^{-\int_0^t c(s+u,X_u^{s,x}) du}, 
\end{align*}
where~$\nabla$ denotes the gradient in the spatial argument,~$g(s+t,\cdot) = g$ and the same for its derivatives. For~\ref{tstwi}, we show 
directional differentiability. Let~$h\in\{f,g\}$; it holds that
\begin{align}
&\bigg\vert\frac{\mathbb{E}[\tilde{h}(t,x')\hat{c}(t,x')] - \mathbb{E}[\tilde{h}(t,x)\hat{c}(t,x)]}{r} - \mathbb{E}\bigg[\nabla h(s+t,X_t^{s,x}) \cdot X_{t(\kappa)}^{s,x} \hat{c}(t,x)\nonumber\\ 
&\quad - \tilde{h}(t,x)\hat{c}(t,x)\int_0^t\nabla c(s+u,X_u^{s,x})\cdot X_{u(\kappa)}^{s,x}du \bigg] \bigg\vert\nonumber\\
&\qquad\leq \abs*{\frac{\mathbb{E}[\tilde{h}(t,x')\hat{c}(t,x')] - \mathbb{E}[\tilde{h}(t,x)\hat{c}(t,x')]}{r} - \mathbb{E}[h_t' \cdot X_{t(\kappa)}^{s,x} \hat{c}(t,x')]}\nonumber\\
&\qquad \quad + \bigg\vert\frac{\mathbb{E}[\tilde{h}(t,x)\hat{c}(t,x')] - \mathbb{E}[\tilde{h}(t,x)\hat{c}(t,x)]}{r} + \mathbb{E}\bigg[\tilde{h}(t,x) c_t^{\dagger} r^{-1}\bigg(\int_0^t (c(s+u,X_u^{s,x'})\nonumber\\
&\qquad \quad - c(s+u,X_u^{s,x}))du \bigg) \bigg]\bigg\vert + \Big\vert\mathbb{E}[h_t' \cdot X_{t(\kappa)}^{s,x} \hat{c}(t,x)]\nonumber\\
&\qquad \quad - \mathbb{E}[\nabla h(s+t,X_t^{s,x}) \cdot X_{t(\kappa)}^{s,x} \hat{c}(t,x)] \Big\vert\nonumber\\
&\qquad\quad + \bigg\vert\mathbb{E}\bigg[\tilde{h}(t,x)c_t^{\dagger}r^{-1}\bigg(\int_0^t (c(s+u,X_u^{s,x'}) - c(s+u,X_u^{s,x}))du \bigg)\bigg]\nonumber\\
&\qquad\quad - \mathbb{E}\bigg[\tilde{h}(t,x) \hat{c}(t,x) \int_0^t\nabla c(s+u,X_u^{s,x}) \cdot X_{u(\kappa)}^{s,x} du \bigg] \bigg\vert.\label{redun2}
\end{align}
The first three terms on the right-hand side of~\eqref{redun2} converge to~$0$ as~$r\rightarrow 0$ by the fundamental theorem of calculus,~\eqref{gbylya}, Lemma~\ref{firstlem} and Lemma~\ref{lemtstw}. 
For the last term, H\"{o}lder's inequality yields
\begin{align}
&\bigg\vert\mathbb{E}\bigg[\tilde{h}(t,x)c_t^{\dagger}r^{-1}\bigg(\int_0^t (c(s+u,X_u^{s,x'}) - c(s+u,X_u^{s,x}))du \bigg)\bigg]\nonumber\\
&\quad - \mathbb{E}\bigg[\tilde{h}(t,x) \hat{c}(t,x) \int_0^t\nabla c(s+u,X_u^{s,x}) \cdot X_{u(\kappa)}^{s,x} du \bigg] \bigg\vert\nonumber\\
&\qquad\leq \left\|\tilde{h}(t,x)\right\|_{L^{k_2}(\mathbb{P})} \bigg\| c_t^{\dagger} \bigg(\int_0^t \bigg(\frac{c(s+u,X_u^{s,x'}) - c(s+u,X_u^{s,x})}{r} \nonumber\\
&\qquad\quad - \nabla c(s+u,X_u^{s,x}) \cdot X_{u(\kappa)}^{s,x}\bigg) du \bigg)\nonumber\\
&\qquad\quad + (c_t^{\dagger} - \hat{c}(t,x))\int_0^t \nabla c(s+u,X_u^{s,x}) \cdot X_{u(\kappa)}^{s,x}du \bigg\|_{L^{k_2'}(\mathbb{P})} ,\label{refac}
\end{align}
where~$\frac{1}{k_2} + \frac{1}{k_2'} = 1$. By~\eqref{gbylya} and Corollary~\ref{huddecor0}, 
we have~$\mathbb{E}\vert\tilde{h}(t,x)\vert^{k_2} \leq C(1+V^{s,T}(0,\tilde{x}(x)) + V^{s,T}(0,\tilde{x}(x')))$. Moreover, H\"{o}lder's inequality yields
\begin{align}
&\mathbb{E}\bigg[\abs*{(c_t^{\dagger} - \hat{c}(t,x)) \int_0^t\nabla c(s+u,X_u^{s,x})\cdot X_{u(\kappa)}^{s,x} du }^{k_2'}\bigg] \nonumber\\
&\quad\leq \bigg(\mathbb{E}\bigg[\abs*{c_t^{\dagger}-\hat{c}(t,x)}^{2k_2'}\bigg]\bigg)^{\frac{1}{2}} \bigg(\mathbb{E}\bigg[\bigg\vert\int_0^t \nabla c(s+u,X_u^{s,x})\cdot X_{u(\kappa)}^{s,x} du\bigg\vert^{2k_2'}\bigg]\bigg)^{\frac{1}{2}}.\label{etstwfm}
\end{align}
For the first factor on the right-hand side, note that by~\eqref{ftstw} in Lemma~\ref{lemtstw}, we have~$\int_0^t c(s+u, X_u^{s,x'}) du \rightarrow \int_0^t c(s+u,X_u^{s,x}) du$ in probability as~$r\rightarrow 0$, so that
\begin{equation*}
\hat{S}_t := e^{-\lambda\int_0^t (c(s+u, X_u^{s,x'})-(1-\lambda)\int_0^t c(s+u,X_u^{s,x})) du} - e^{-\int_0^t  c(s+u,X_u^{s,x}) du} \rightarrow 0
\end{equation*}
in probability by the continuous mapping theorem and~$\mathbb{E}[\abs{c_t^{\dagger} - \hat{c}(t,x)}^{2k_2'}] \leq \int_0^1 \mathbb{E} [\abs{ \hat{S}_t }^{2k_2'}] d\lambda \rightarrow 0$ as~$r\rightarrow 0$ by~\eqref{thatof} with~$k_1 = 2k_2'$,~$k>2k_2'$ and~$S = \hat{S}_t$. By setting~$K>2k_2'$, the second factor on the right-hand side of~\eqref{etstwfm} is clearly bounded independently of~$r$ (and of~$t$) by H\"{o}lder's inequality, our assumption on the derivatives of~$c$ and Lemma~\ref{firstlem}.\\
For the remaining term in the second factor on the right-hand side of~\eqref{refac}, the triangle inequality on~$L^{k_2'}(\mathbb{P})$ yields
\begin{align}
&\left\|c_t^{\dagger}\int_0^t (r^{-1}(c(s+u,X_u^{s,x'}) - c(s+u,X_u^{s,x})) - \nabla c(s+u,X_u^{s,x}) \cdot X_{u(\kappa)}^{s,x})du \right\|_{L^{k_2'}(\mathbb{P})}\nonumber\\
&\quad\leq \left\| \int_0^t \bigg( c_u' \cdot r^{-1}(X_u^{s,x'} - X_u^{s,x}) - \nabla c(s+u,X_u^{s,x}) \cdot X_{u(\kappa)}^{s,x} \bigg) du \right\|_{L^{k_2'}(\mathbb{P})}\nonumber\\
&\quad\leq \left\| \int_0^t c_u' \cdot (r^{-1}(X_u^{s,x'} - X_u^{s,x}) - X_{u(\kappa)}^{s,x}) du \right\|_{L^{k_2'}(\mathbb{P})}\nonumber\\
&\qquad + \left\|\int_0^t (c_u' - \nabla c(s+u,X_u^{s,x})) \cdot X_{u(\kappa)}^{s,x} du \right\|_{L^{k_2'}(\mathbb{P})}.\label{tfowuh}
\end{align}
For the first term of the right-hand side of~\eqref{tfowuh}, by Jensen's inequality, 
Corollary~\ref{huddecor0}, 
setting~$K>2k_2'$ and our assumption about the derivatives of~$c$, we have
\begin{align}
&\mathbb{E}\bigg[\bigg\vert\int_0^t c_u'\cdot\bigg( \frac{X_u^{s,x'} - X_u^{s,x}}{r} - X_{u(\kappa)}^{s,x}\bigg)du \bigg\vert^{k_2'}\bigg] \nonumber\\
&\quad\leq T^{k_2'-1}\mathbb{E}\bigg[\int_0^{T-s}\bigg\vert c_u'\cdot\bigg( \frac{X_u^{s,x'} - X_u^{s,x}}{r} - X_{u(\kappa)}^{s,x}\bigg)\bigg\vert^{k_2'}du\bigg]\nonumber\\
&\quad\leq T^{k_2'-1} \bigg(\mathbb{E}\bigg[\int_0^{T-s}\!\abs*{c_u'}^{2k_2'} du\bigg]\bigg)^{\!\frac{1}{2}} \!\bigg(\mathbb{E}\bigg[\int_0^{T-s} \bigg\vert\frac{X_u^{s,x+r\kappa}-X_u^{s,x}}{r} - X_{u(\kappa)}^{s,x}\bigg\vert^{2k_2'}\! du\bigg]\bigg)^{\frac{1}{2}}\nonumber\\
&\quad\leq C\big(1 + V^{s,T}(0,\tilde{x}(x')) + V^{s,T}(0,\tilde{x}(x))\big)^{\frac{1}{2}} \nonumber\\
&\qquad \cdot \bigg(\mathbb{E}\bigg[\sup_{0\leq u\leq T-s}\bigg\vert\frac{X_u^{s,x+r\kappa}-X_u^{s,x}}{r} - X_{u(\kappa)}^{s,x}\bigg\vert^{2k_2'}\bigg]\bigg)^{\frac{1}{2}}\label{etstw2}
\end{align}
for~$C$ independent of~$t$, which converges to $0$ as $r\rightarrow 0$ by Lemma~\ref{firstlem}. For the second term on the right-hand side of~\eqref{tfowuh}, it holds that
\begin{align}
&\mathbb{E}\bigg[\bigg\vert\int_0^t (c_u' - \nabla c(s+u,X_u^{s,x})) \cdot X_{u(\kappa)}^{s,x}du\bigg\vert^{k_2'}\bigg]\nonumber\\
&\quad \leq C\bigg(\int_0^{T-s}\mathbb{E}[\abs*{c_u' - \nabla c(s+u,X_u^{s,x})}^{2k_2'}]du \bigg)^{\frac{1}{2}}\bigg(\mathbb{E}\bigg[\int_0^{T-s}\abs{X_{u(\kappa)}^{s,x}}^{2k_2'}du \bigg]\bigg)^{\frac{1}{2}}.\label{fluster}
\end{align}
The last factor in the right-hand side of~\eqref{fluster} is uniformly bounded in~$r$ by Lemma~\ref{firstlem} and the first factor converges to~$0$ as~$r\rightarrow 0$ by Lemma~\ref{lemtstw}.

Putting together the above in~\eqref{redun2} gives that $\mathbb{E}[g(X_t^{s,x})e^{\int_0^t c(s+u,X_u^{s,x})du}]$ is directionally differentiable in $x$. For the other term in the expectation of~\eqref{etstw}, it suffices to check that after integrating the inequality~\eqref{redun2} in~$t$ from~$0$ to~$T-s$, the same convergences hold as~$r\rightarrow 0$. This is true for the first three term on the right-hand side of~\eqref{redun2} by the same reasoning as before. It is true for the right-hand side of~\eqref{refac} by dominated (in~$t$) convergence, since the right-hand sides of~\eqref{etstwfm},~\eqref{etstw2} and~\eqref{fluster} are uniformly bounded in~$t\in[0,T-s]$ and~$r\in[0,\epsilon]$ for some~$\epsilon>0$. By induction and largely the same arguments as above, higher order directional derivatives in~$x$ of~$\mathbb{E}[\tilde{h}(t,x)\hat{c}(t,x)]$ exist and they are sums of expressions of the form
\begin{align}
&\mathbb{E}\bigg[\partial^{\beta_1}h(s+t,X_t^{s,x}) \hat{c}(t,x)\bigg(\prod_{\beta_2\in\hat{I}_2}(\partial^{(\beta_2)}X_t^{s,x})_{j_{\beta_2}}\bigg) \label{fform}\\
&\quad\cdot \prod_{\beta_3\in\hat{I}_3} \int_0^t\partial^{\beta_3} c(s+u,X_u^{s,x}) \prod_{\beta_4\in\hat{I}_{\beta_3}}(\partial^{(\beta_4)}X_u^{s,x})_{j_{\beta_4}} du\bigg],\nonumber
\end{align}
where~$h\in\{f,g\}$,~$\beta_1$ is a multiindex with~$0\leq \abs{\beta_1}\leq p$,~$\hat{I}_2,\hat{I}_3,\hat{I}_{\beta_3}$ are some finite sets of multiindices each with absolute value less than or equal to~$p$ and~$j_{\beta_2},j_{\beta_4}\in\{1,\dots,n\}$. 

For differentiability of the expectation of~\eqref{etstw} in~$x$, 
note that Theorem~1.2 in~\cite{MR1731794} may be applied on~\eqref{firsdd} due to (by Assumption~\ref{A1lya} and the same for~$\sigma$)
\begin{align*}
&\int_0^{T-s}\abs{\partial_i b(s+r,X_t^{s,x})}dr \\
&\quad\leq C\bigg(1+\sum_{i\in I_0}\int_0^{T-s}\log V_i(r,\bar{x}_i(x))dr + \sum_{i'\in I_0'} \log V_{i'}(T-s,\bar{x}_{i'}(x)) \bigg)\\
&\quad\leq C\bigg(1+\sum_{i\in I_0}\int_0^{T-s}V_i(r,\bar{x}_i(x)) dr + \sum_{i'\in I_0'} V_{i'}(T-s,\bar{x}_{i'}(x))\bigg)
\end{align*}
and Lemma~\ref{ergodfin}, so that the derivatives in probability~$X_{t(\kappa)}$ are unique solutions to~\eqref{firsdd} for the initial condition~$\kappa$. Therefore the first directional derivatives from the left-hand side of~\eqref{redun2} indeed form a linear map. 
The same arguments apply for expressions of the form~\eqref{fform} that are directionally differentiable, where additionally Assumption~\ref{A2lya}, Lemma~\ref{firstlem} and Theorem~\ref{seclem} are to be used to control~$K_t(1)$ from Theorem~1.2 in~\cite{MR1731794}. Next, we show continuity in~$x$ of expressions of the form~\eqref{fform} (for multiindices with absolute values bounded by~$p$). 
Note first that~$\mathbb{P}(\sup_{0\leq u\leq T-s}\abs{\partial^{\beta}X_u^{s,x'} - \partial^{\beta}X_u^{s,x}}\leq\epsilon)\leq \mathbb{P}(\abs{\partial^{\beta}X_t^{s,x'} - \partial^{\beta}X_t^{s,x}}\leq\epsilon)$, therefore~$\partial^{\beta}X_t^{s,x}$ is continuous in probability w.r.t. to~$x$ by Theorem~4.10 in~\cite{MR1731794}. Consequently the product w.r.t.~$\beta_2$ in~\eqref{fform} and~$\partial^{\beta_1}h(s+t,X_t^{s,x})$ are sequentially continuous in probability by Theorem~20.5 in~\cite{MR2893652}. Lemma~\ref{lemtstw} and continuous mapping theorem yield that~$\hat{c}(t,x)$ is continuous in probability w.r.t.~$x$. For the remaining factors in~\eqref{fform}, for~$1<k<K$, we have
\begin{align*}
&\int_0^t\bigg\vert\partial^{\beta_3} c(s+u,X_u^{s,x'})\prod_{\beta_4\in\hat{I}_{\beta_3}}(\partial^{(\beta_4)}X_u^{s,x'})_{j_{\beta_4}}\\
&\quad - \partial^{\beta_3} c(s+u,X_u^{s,x})\prod_{\beta_4\in\hat{I}_{\beta_3}}(\partial^{(\beta_4)}X_u^{s,x})_{j_{\beta_4}}\bigg\vert du\\
&\quad \leq\int_0^{T-s} \bigg\vert\partial^{\beta_3} (c(s+u,X_u^{s,x'}) - c(s+u,X_u^{s,x})) \prod_{\beta_4\in\hat{I}_{\beta_3}}(\partial^{(\beta_4)}X_u^{s,x'})_{j_{\beta_4}}\bigg\vert du\\ 
&\qquad + \int_0^{T-s} \bigg\vert\partial^{\beta_3} c(s+u,X_u^{s,x})\prod_{\beta_4\in\hat{I}_{\beta_3}}(\partial^{(\beta_4)}(X_u^{s,x'} - X_u^{s,x}))_{j_{\beta_4}}\bigg\vert du\\
&\quad\leq \int_0^{T-s}\bigg\vert\partial^{\beta_3} (c(s+u,X_u^{s,x'}) - c(s+u,X_u^{s,x}))\bigg\vert du\!\! \prod_{\beta_4\in\hat{I}_{\beta_3}}\!\!\sup_{0\leq u\leq T-s} \abs*{\partial^{(\beta_4)}X_u^{s,x'} }\\
&\qquad + \int_0^{T-s}\abs{\partial^{\beta_3}c(s+u,X_u^{s,x})}du \prod_{\beta_4\in\hat{I}_{\beta_3}}\sup_{0\leq u\leq T-s} \abs*{\partial^{(\beta_4)}X_u^{s,x'}-\partial^{(\beta_4)}X_u^{s,x} }
\end{align*}
By H\"older's inequality, Lemma~\ref{firstlem} and Theorem~\ref{seclem}, the first term on the right-hand side converges to zero in mean, therefore to zero in probability, as~$x'\rightarrow x$. By Theorem~4.10 in~\cite{MR1731794} (and the continuous mapping theorem), the second term on the right-hand side also converges to zero in probability. Therefore the left-hand side converges to zero in probability. By the continuous mapping theorem, the term inside the square bracket in~\eqref{fform} is sequentially continuous in probability. Consequently, by~\eqref{thatof} with~$k_1 = 1$,~$k = \frac{1+k_2}{2}$,~$S = 
\abs{J(x')-J(x)}$, where~$J(x)$ is equal to the term inside the square brackets in~\eqref{fform}, together with H\"older's inequality, inequality~\eqref{gbylya}, our assumption on the derivatives of~$c$ with a large enough~$K$, 
Corollary~\ref{huddecor0}, 
Lemma~\ref{firstlem} and Theorem~\ref{seclem}, expectations of the form~\eqref{fform} are continuous functions w.r.t.~$x$ and so are their integrals in~$t$ by dominated convergence, which concludes the proof for~\ref{tstwi}.

Using the same arguments and denoting the expression~\eqref{fform} by~$\hat{u}$, it holds that 
\begin{align*}
\hat{u} &\leq C(1+V^{s,T}(0,\tilde{x}(x)))^{\frac{1}{k_2}}\bigg(\mathbb{E}\bigg[\sup_{0\leq u\leq T-s}\abs{\partial^{(\beta_2)}X_u^{s,x}}^{2k_2'}\bigg]\bigg)^{\frac{1}{2k_2'}}\\
&\quad\cdot  \prod_{(\beta_3,\beta_4)\in\hat{I}}(1+V^{s,T}(0,\tilde{x}(x)))^{\frac{1}{c_{\beta_3}}} \bigg(\mathbb{E}\bigg[\sup_{0\leq u\leq T-s}\abs{\partial^{(\beta_4)}X_u^{s,x}}^{c_{\beta_4}}\bigg]\bigg)^{\frac{1}{c_{\beta_4}}}
\end{align*}
for some~$c_{\beta_3},c_{\beta_4}>0$,~$\beta_3,\beta_4\in\hat{I}$ and in particular for some constant~$C$ independent of~$t$. The proof for~\ref{tstwii} then concludes by Theorem~\ref{seclem}.

Assertion~\ref{tstwiii} then follows by Theorem~3.5(iii) in~\cite{MR1731794}, Lemma~\ref{tstwalem} and by noting that~$C$ above is independent of~$s$ given the Lyapunov functions are 
local in~$s$.
\end{proof}

\subsection{Twice spatially differentiable solutions}\label{tdkol}

In this section, we prove that the expectation of~\eqref{etstw} with~$t=T-s$ solves a Kolmogorov equation by the approach in~\cite{MR1731794}. The main ingredient beside differentiability of the associated semigroups, given in Theorem~\ref{fk2}, is that the SDE can be approximated in probability by an Euler-type approximation locally uniformly in initial time and space, which is given in Lemma~\ref{eulercan}. Throughout this section, we assume~$O = \mathbb{R}^n$ and as before that~$b,\sigma$ are nonrandom (this is enforced by Assumption~\ref{A3lya}).


\begin{lemma}\label{eulercan}
Let Assumption~\ref{A3lya} hold. 
For~$I = \{t_k\}_{k\in\mathbb{N}_0}\subset [0,\infty)$ with~$t_0 = 0$,~$t_{k+1}\geq t_k$,~$k\in\mathbb{N}$,~$t_k\rightarrow \infty$ as~$k\rightarrow \infty$,~$\sup_{k\geq 0} t_{k+1} - t_k <\infty$, $s\in[0,\infty)$, $x\in\mathbb{R}^n$, let~$X_t^{s,x}(I)$ denote the Euler approximation given by~$X_0^{s,x}(I) = x$ and
\begin{equation}\label{euap}
X_t^{s,x}(I) = X_{t_k}^{s,x}(I) + \int_{t_k}^t b(s+r,X_{t_k}^{s,x}(I)) dr + \int_{t_k}^t\sigma(s+r,X_{t_k}^{s,x}(I))dW_r, 
\end{equation}
on $t\in [t_k,t_{k+1}]$, $k\in\mathbb{N}$. For any $R',T' \geq 0$, $\epsilon>0$, it holds that
\begin{equation*}
\sup_{s\in[0,T']}\sup_{\abs*{x}\leq R'} \mathbb{P}\bigg[\sup_{t\in [0,T']} \abs*{X_t^{s,x} - X_t^{s,x}(I)}\geq \epsilon \bigg]\rightarrow 0
\end{equation*}
as $\sup_{k\geq 0} t_{k+1} - t_k \rightarrow 0$.
\end{lemma}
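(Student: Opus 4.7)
I would localize both processes via stopping times at the exit of a large ball, use the Lyapunov hypothesis to control the SDE's exit time uniformly in $(s,x)$, and bootstrap the confinement of the Euler scheme in a slightly larger ball directly from the pathwise comparison itself, avoiding any separate estimate on its growth.

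Fix $\epsilon,\eta>0$ and set $\tau_R^{s,x}=\inf\{t:\abs{X_t^{s,x}}\geq R\}$. The pathwise bound $\hat{G}(s+t,X_t^{s,x})\leq V^{s,T'}(t,\tilde{x}(x))$ and the divergence of $g(R):=\inf_{\abs{y}\geq R,\,u\in[0,2T']}\hat{G}(u,y)$ as $R\to\infty$ (property~\ref{Ginf} of Assumption~\ref{A3lya}) give
\begin{equation*}
\mathbb{P}(\tau_R^{s,x}\leq T')\leq g(R)^{-1}\,\mathbb{E}\sup_{t\leq T'}V^{s,T'}(t,\tilde{x}(x)).
\end{equation*}
Theorem~2.4 of~\cite{MR4260476} combined with the locality in~$s$ bounds the right-hand side by $g(R)^{-1}C(1+V_0(s,\tilde{x}(x)))$ with $C$ independent of $s$, uniformly small over $(s,x)\in[0,T']\times B_{R'}$ for $R$ large. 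Fix such $R$ with the bound below $\eta/2$, set $R_1=R+\epsilon$, $\sigma_{R_1}^{s,x}(I)=\inf\{t:\abs{X_t^{s,x}(I)}\geq R_1\}$ and $\tau^{s,x}=\tau_{R_1}^{s,x}\wedge\sigma_{R_1}^{s,x}(I)$.

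On the event $\{\tau_R^{s,x}>T'\}\cap\{\sup_{t\leq T'\wedge\tau^{s,x}}\abs{X_t^{s,x}-X_t^{s,x}(I)}<\epsilon\}$, $\abs{X_t^{s,x}(I)}\leq \abs{X_t^{s,x}}+\epsilon<R_1$ up to $T'\wedge\tau^{s,x}$, so $\sigma_{R_1}^{s,x}(I)>T'$ and hence $\sup_{t\leq T'}\abs{X_t^{s,x}-X_t^{s,x}(I)}<\epsilon$. A union bound and Markov reduce the proof to showing
\begin{equation*}
\sup_{s\in[0,T'],\,\abs{x}\leq R'}\mathbb{E}\sup_{t\leq T'\wedge\tau^{s,x}}\abs{X_t^{s,x}-X_t^{s,x}(I)}^2\to 0\quad\textrm{as}\quad\delta:=\sup_k(t_{k+1}-t_k)\to 0.
\end{equation*}
I would apply It\^{o}'s formula to $\abs{X_t-X_t(I)}^2$ on $[0,T'\wedge\tau^{s,x}]$, split $b(s+r,X_r)-b(s+\lfloor r\rfloor_I,X_{\lfloor r\rfloor_I}(I))$ (and analogously for $\sigma$) into a local-monotonicity part $b(s+r,X_r)-b(s+r,X_r(I))$, controlled by the weight $K_{s+r}(R_1)$ from Assumption~\ref{A3lya}, plus an Euler-freezing error $b(s+r,X_r(I))-b(s+\lfloor r\rfloor_I,X_{\lfloor r\rfloor_I}(I))$. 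Stopped at $\tau^{s,x}$, the one-step increment $\abs{X_r(I)-X_{\lfloor r\rfloor_I}(I)}$ is controlled in $L^2$ via the integrable envelopes $\sup_{\abs{y}\leq R_1}(\abs{b(s+\cdot,y)}+\norm{\sigma(s+\cdot,y)}^2)$ from the preliminaries; combined with continuity of $b(r,\cdot),\sigma(r,\cdot)$ on $B_{R_1}$ and dominated convergence, this yields $L^2$-smallness of the freezing error. A stochastic Gr\"onwall step against the locally integrable $K_\cdot(R_1)$ then closes the estimate.

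\textbf{Main obstacle.} The delicate point is controlling the Euler discretization error uniformly in $(s,x)$ without any $t$-continuity of $b,\sigma$; this relies on the integrable envelopes from the preliminaries, which dominate the one-step increments and the dominated-convergence pass to the limit once we are stopped inside $B_{R_1}$. Uniformity in $s$ ultimately comes from the locality in~$s$ of the Lyapunov family $V^{s,T'}$ in Assumption~\ref{A3lya}, and uniformity in $x\in B_{R'}$ from continuity of $V_0,\tilde{x}$ and local integrability of $K_\cdot(R_1)$.
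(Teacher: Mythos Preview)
Your overall strategy is sound and leads to a correct proof, but it differs from the paper's in one structural respect. The paper does not stop the Euler scheme at all: instead it introduces smooth cutoff functions $\varphi_k$ and globally truncated coefficients $b^{(k)}=b\varphi_k$, $\sigma^{(k)}=\sigma\varphi_k$, so that the truncated SDE and its Euler approximation satisfy a global monotonicity and linear-growth condition. It then invokes Corollary~5.4 in~\cite{MR1731794} as a black box for the uniform-in-$(s,x)$ convergence of the truncated Euler scheme, and decomposes $\mathbb{P}(\sup|X-X(I)|>\epsilon)\leq \mathbb{P}(\sup|Y^{k}-Y^{k}(I)|>\epsilon)+\mathbb{P}(\tau_{k-1}\leq T')$ following~\cite{MR4074703}. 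The exit-time term is controlled exactly as you do, via Assumption~\ref{A3lya}\ref{Ginf} and Theorem~2.4 in~\cite{MR4260476}. Your route instead bootstraps the Euler confinement from the pathwise comparison and re-derives the localized Gr\"onwall estimate by hand, which is more self-contained but essentially reproves the content of Krylov's corollary inside $B_{R_1}$.

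Two small points in your sketch are worth tightening. First, the Euler scheme~\eqref{euap} freezes only the spatial argument, so the freezing error is $b(s+r,X_r(I))-b(s+r,X_{\lfloor r\rfloor_I}(I))$ with the \emph{same} time argument; no $t$-continuity of $b,\sigma$ is involved, and your ``main obstacle'' paragraph overstates this concern. Second, the preliminaries give only $\int_0^T\sup_{|y|\leq R_1}\abs{b(t,y)}\,dt<\infty$, not the square, so the dominated-convergence pass for the drift freezing error should be done on the \emph{time-integral} (using $2\langle e_r,\Delta b\rangle\leq \sup_t|e_t|^2 + (\int_0^{T'}|\Delta b|\,dr)^2$ after integrating), or via a standard Young/Gr\"onwall rearrangement; as written, ``$L^2$-smallness of the freezing error'' at each time need not follow. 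Also, for the exit-time bound you do not need $\mathbb{E}\sup_t V^{s,T'}$: evaluating $\hat G$ at the stopping time $\tau_R\wedge T'$ and applying Theorem~2.4 in~\cite{MR4260476} at that stopping time suffices, exactly as in the paper.
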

\begin{proof}
We extend the proof of Theorem~1 in~\cite{MR4074703} to obtain convergence that is uniform with respect to~$s\in[0,T]$ and~$x\in B_R$. Fix the numbers~$R',T'\geq 0$. 
For~$k\in\mathbb{N}$, let~$\varphi_k:\mathbb{R}^n\rightarrow [0,\infty)$ be smooth cutoff functions satisfying~$\varphi_k(x) = 1$ for~$x\in B_k$,~$\varphi_k(x) = 0$ for~$x\in \mathbb{R}^n\setminus B_{k+1}$ and let~$b^{(k)}:[0,\infty)\times \mathbb{R}^n\rightarrow\mathbb{R}^n$,~$\sigma^{(k)}:[0,\infty)\times \mathbb{R}^n\rightarrow\mathbb{R}^{n\times n}$ be given by~$b^{(k)} = b\varphi_k$ and~$\sigma^{(k)} = \sigma\varphi_k$.
Let~$Y_t^{s,x,k}(I)$ be the unique solutions to the corresponding SDE with drift~$b^{(k)}$ and diffusion coefficient~$\sigma^{(k)}$. The corresponding Euler approximation is given by~\eqref{euap} with~$Y_0^{s,x,k} = Y_0^{s,x,k}(I) = x$. Fix w.l.o.g.~$0< \epsilon \leq 1$. In the same way as in the proof of Theorem~1 in~\cite{MR4074703}, one obtains that for any~$s\in [0,T']$,~$x\in \mathbb{R}^n$ and~$k\geq R'+1$,
\begin{align*}
\mathbb{P}\bigg(\sup_{0\leq t\leq T'} \abs*{X_t^{s,x} - X_t^{s,x}(I)}> \epsilon \bigg)  &\leq \mathbb{P}\bigg(\sup_{0\leq t\leq T'}\abs*{Y_t^{s,x,k} - Y_t^{s,x,k}(I)} > \epsilon \bigg)\\
&\quad + \mathbb{P}(\tau_{k-1} \leq T'),
\end{align*}
where~$\tau_{k-1} = \inf\{t\geq 0: \abs*{X_t^{s,x}} > k-1\}$. By Markov's inequality, (the arguments of) Corollary~\ref{huddecor0} 
and Assumption~\ref{A3lya}\ref{Ginf}
, it holds 
that
\begin{align*}
&\mathbb{P}(\tau_{k-1} \leq T')\inf_{t\in[s,s+T'],\abs*{y}=k-1}\hat{G}(t,y)\\
&\quad\leq \mathbb{E}[\hat{G}(s+(\tau_{k-1}\wedge T'),X_{\tau_{k-1}\wedge T'}^{s,x})]\\
&\quad\leq \mathbb{E}[V^{s,T'}(\tau_{k-1}\wedge T',\tilde{x}(x))]\\
&\quad\leq \Big\|e^{\int_0^{(\tau_{k-1}\wedge T')} \alpha_u^{s,T'} du}\Big\|_{L^{\frac{p^{s,T'}}{p^{s,T'}-1}}(\mathbb{P})} \\
&\qquad\cdot\bigg(  V_0(s,\tilde{x}(x)) + \int_0^{T'} \bigg\|\frac{\mathds{1}_{[0,\tau_{k-1}\wedge T')} (v) \beta_v^{s,T'}}{e^{\int_0^v \alpha_u^{s,T'} du}}\bigg\|_{L^{p^{s,T'}}(\mathbb{P})} dv\bigg).
\end{align*}
For any~$0<\epsilon'<1$, by the assumption that~$V^{s,T'}$ is local in~$s$  
and continuity of~$V_0$, there exists~$k^*$ such that $\mathbb{P}(\tau_{k^*-1}\leq T') \leq \frac{\epsilon'}{2}$ for all~$s\in[0,T']$ and~$x\in B_{R'}$. In addition, for any~$R>0$, it holds that 
\begin{align*}
&2\langle x-y, b^{(k^*)}(t,x) - b^{(k^*)}(t,y) \rangle + \| \sigma^{(k^*)}(t,x) - \sigma^{(k^*)}(t,y)  \|^2\\
&\quad \leq 2\langle x-y, b(t,x) - b(t,y) \rangle \varphi_{k^*}(x) + 2\abs*{b(t,y)}\abs*{x-y} \abs*{\varphi_{k^*}(x) - \varphi_{k^*}(y)}\\
&\qquad + \| \sigma(t,x) - \sigma(t,y) \|^2 \varphi_{k^*}(x)^2 + \|\sigma(t,y)\|^2 \abs*{\varphi_{k^*}(x) - \varphi_{k^*}(y)}^2\\
&\quad \leq (K_t(R)+C\sup_{y'\in B_R}(\abs*{b(t,y')} + \|\sigma(t,y')\|^2))\abs*{x-y}^2 
\end{align*}
for all~$x,y\in B_R$ and
\begin{equation*}
2\langle x,b^{(k^*)}(t,x) \rangle + \|\sigma^{(k^*)}(t,x)\|^2 \leq 2(1+\abs*{x})\sup_{x'\in B_{k^*+1}}(\abs*{b(t,x')} + \|\sigma(t,x')\|^2)
\end{equation*}
for all~$x\in\mathbb{R}^n$. Therefore Corollary~5.4 in~\cite{MR1731794} can be applied to obtain
\begin{equation*}
\sup_{s\in[0,T']}\sup_{x\in B_R}\mathbb{P}\bigg(\sup_{0\leq t\leq T'}\abs*{Y_t^{s,x,k^*} - Y_t^{s,x,k^*}(I)} > \epsilon \bigg) \rightarrow 0
\end{equation*}
as~$\sup_{k\geq 0} t_{k+1} - t_k \rightarrow 0$, which concludes the proof.
\end{proof}

\begin{theorem}\label{fk}
Let the assumptions of Theorem~\ref{fk2} hold with~$p\geq 2$ and let Assumption~\ref{A3lya} hold. Let~$f,c,g$ 
satisfy Assumption~\ref{Afcg}. There exists~$K>1$ such that if~$c$ has~$(p,k')$-Lyapunov derivatives for any~$1<k'<K$ and the families of Lyapunov functions in Assumptions~\ref{A1lya},~\ref{A2lya},~\ref{Afcg} and for the Lyapunov derivatives of~$c$ are local in~$s$, then 
for~$v:[0,T]\times\mathbb{R}^n\rightarrow\mathbb{R}$ given by
\begin{equation}\label{actualetstw}
v(t,x) = \mathbb{E}[u(t,T-t,x)],
\end{equation}
with~$u$ as in~\eqref{etstw}, the equation~\eqref{asfkeq0} 
holds almost everywhere in $(0,T)\times\mathbb{R}^n$.
\end{theorem}


\begin{proof}
Theorem~\ref{fk2}, Theorem~3.6 in~\cite{MR1731794} applied on the SDE~\eqref{sde0} appended by~\eqref{addingonapp} and Lemma~\ref{tstwalem} yield~$(\partial_t v  + b \cdot \nabla v + a: D^2 v - c v + f)e^{-x'} = 0$ almost everywhere.
\end{proof}
The assumptions in Theorems~\ref{fk2} and~\ref{fk} remain strictly weaker than those in~\cite[Lemma~5.10]{MR1731794}, since Lyapunov functions that are positive polynomials can easily be obtained under the global Lipschitz conditions there. 

Alternative to Theorem~\ref{fk}, 
under slightly stronger assumptions, we may use the approach as in~\cite[Theorem~1.6.2]{MR1840644},~\cite[Theorem~5.7.6]{MR1121940} in order to obtain unique classical solutions to Kolmogorov equations.
\begin{theorem}\label{classical}
Let the assumptions of Theorem~\ref{fk2} hold with~$p\geq 2$ and let Assumption~\ref{A3lya} hold. 
Assume~$b,\sigma$ are independent of~$t$, so that it holds~$\mathbb{P}$-a.s. that~$X_{\cdot}^{s,x} = X_{\cdot}^{0,x} =X_{\cdot}^x $ for all~$s$. 
Let~$f,c,g$ satisfy Assumption~\ref{Afcg} and be continuous. 
There exists~$\bar{K}>1$ such that if 
\begin{enumerate}[label=(\roman*)]
\item for any~$k'\in(1,\bar{K}]$,~$f,c,g$ have~$(p,k')$-Lyapunov derivatives,
\item\label{sepo} for any family of Lyapunov functions~$(\tilde{V}^{s,T})_{s\in[0,T]}$ and corresponding mappings~$(\hat{x})_{s\in[0,T]}$ in Assumptions~\ref{A1lya},~\ref{A2lya} and 
Definition~\ref{lyadiff}, it holds that~$(\tilde{V}^{s,T})_s$ is local in~$s$ (as in Definition~\ref{locdef}) and there exists a constant~$C>0$ such that for any~$s$, it holds~$\mathbb{P}$-a.s. that~$\tilde{V}^{s+t,T}(0,\hat{x}(X_t^x))\leq C(1+\tilde{V}^{s,T}(t,\hat{x}(x)))$ for all~$x\in\mathbb{R}^n$,~$t\in[0,T-s]$,
\end{enumerate}
then the function~$v:[0,T]\times\mathbb{R}^n\rightarrow\mathbb{R}$ given by~\eqref{actualetstw} and~\eqref{etstw} 
is the unique classical solution to~\eqref{asfkeq0} on~$[0,T]\times\mathbb{R}^n$, in the sense that~$v\in C^{1,2}$,~$\partial_t v$,~$\nabla\!_x v$,~$D_x^2v$ are continuous,~$v$ satisfies~\eqref{asfkeq0} and it is the only such function satisfying~$v(T,\cdot)=g$. 
\end{theorem}
\begin{proof}
For any~$s\geq 0$,~$x\in\mathbb{R}^n$,~$x',x''\in\mathbb{R}$, consider the solutions~$X_t^x$ to~\eqref{sde} appended with the corresponding~$\mathbb{R}$-valued solutions~$X_t^{(n+1),s,x'}$ and~$X_t^{(n+2),s,x''}$ to~\eqref{addingonapp} 
on~$[0,T]$, denoted~$\bar{X}_t^{s,y}=(X_t^x,X_t^{(n+1),s,x'},X_t^{(n+2),s,x''})$ for~$y=(x,x',x'')$.
Let~$\bar{g}:\mathbb{R}^{n+2}\rightarrow\mathbb{R}$ be given by~$\bar{g}(x,x',x'')=x'' + g(x) e^{-x'}$. 
By Lemma~\ref{regsde}, the joint equations~\eqref{sde},~\eqref{addingonapp} and their solutions are regular (Definition~2.1 in~\cite{MR1731794}). Therefore the Markov property as in Theorem~2.13 in~\cite{MR1731794} applies. In particular, since~$\mathbb{E}[\abs{\bar{g}(\bar{X}_{T-s}^{s,y})}]<\infty$ holds by our assumptions on~$f,c,g$, 
it holds by the usual decomposition into positive and negative parts that
\begin{equation}\label{markov0}
\mathbb{E}[\bar{g}(\bar{X}_{T-s}^{s,y})] = \int \int \bar{g}\Big(\bar{X}_{T-s-r}^{s+r,\bar{X}_r^{s,y}(\omega)}(\omega')\Big) d\mathbb{P}(\omega')d\mathbb{P}(\omega) = \mathbb{E}[(\mathbb{E}[\bar{g}(\bar{X}_{T-s-r}^{s+r,\cdot})])(\bar{X}_r^{s,y})]
\end{equation}
for all~$s\in[0,T]$,~$r\in[0,T-s]$. On the other hand, by Theorem~\ref{fk2}\ref{tstwi} and~$p\geq 2$,~$\mathbb{R}^{n+2}\ni y\mapsto\mathbb{E}[\bar{g}(\bar{X}_{T-s}^{s,y})]$ is twice continuously differentiable. Therefore
by It\^{o}'s lemma and~\eqref{markov0}, it holds for any~$s\in[0,T]$,~$h>0$ with~$s-h\in[0,T]$,~$x\in\mathbb{R}^n$,~$y=(x,0,0)$ that
\begin{align}
&h^{-1}\big(\mathbb{E}\big[\bar{g}\big(\bar{X}_{T-s+h}^{s-h,y}\big)\big] - \mathbb{E}\big[\bar{g}\big(\bar{X}_{T-s}^{s,y}\big)\big]\big)\nonumber\\
&\quad = h^{-1}\big(\mathbb{E}\big[\big(\mathbb{E}\big[\bar{g}\big(\bar{X}_{T-s}^{s,\cdot}\big)\big]\big)\big(\bar{X}_h^{s-h,y}\big)\big] - \mathbb{E}\big[\bar{g}\big(\bar{X}_{T-s}^{s,y}\big)\big]\big)\nonumber\\
&\quad= \mathbb{E}\bigg[\frac{1}{h}\int_0^h \bigg((b\cdot\nabla\!_x \mathbb{E}[\bar{g}(\bar{X}_{T-s}^{s,\cdot})])(\bar{X}_r^{s-h,y}) + \frac{1}{2}\Big(\Big(\sigma\sigma^{\top}\Big)\!\!:\!D_x^2\mathbb{E}[\bar{g}(\bar{X}_{T-s}^{s,\cdot})]\Big)(\bar{X}_r^{s-h,y}) \nonumber\\
&\qquad - c(s-h+r,X_r^x) \Big((\mathbb{E}[\bar{g}(\bar{X}_{T-s}^{s,\cdot})])(\bar{X}_r^{s-h,y}) - X_r^{(n+2),s-h,0}\Big)\nonumber\\
&\qquad+ f(s-h+r,X_r^x)e^{-X_r^{(n+1),s-h,0}}\bigg)dr\bigg].\label{markov1}
\end{align}
We show that the right-hand side of~\eqref{markov1} is well-behaved as~$h\rightarrow0$, that is, it converges to
\begin{equation}\label{bslim}
\bigg(b(x)\cdot\nabla\!_x + \frac{1}{2}\Big(\Big(\sigma\sigma^{\top}\Big)(x)\Big):D_x^2 -c(s,x)\bigg)\mathbb{E}[\bar{g}(\bar{X}_{T-s}^{s,y})] + f(s,x).
\end{equation}
For this, H\"{o}lder's inequality and dominated convergence theorem may be used along with Theorem~\ref{fk2}\ref{tstwii}. 
In order to obtain a good enough bound from~\eqref{tstweq}, set~$\bar{K} = 2\max(k_2,K)\textrm{degree}(q^*)$, where~$k_2,K$ are constants from Theorem~\ref{fk2} and~$q^*$ is the polynomial in Theorem~\ref{fk2}\ref{tstwii}. 
Since~$f,c,g$ have~$(p,k')$-Lyapunov derivatives for all~$k'\in(1,\bar{K})$ by assumption, for any~$h\in\{f,c,g\}$ and~$k'\in\{2k_2\textrm{degree}(q^*),2K\textrm{degree}(q^*)\}$, there exist a Lyapunov function~$V$, locally bounded~$\tilde{x}$ and constant~$N>0$ satisfying that for any multiindices~$\alpha$ with~$0\leq \abs{\alpha}\leq p$, it holds~$\mathbb{P}$-a.s. that
\begin{align*}
&\abs{\partial^{\alpha}h(\lambda X_t^x+(1-\lambda)X_t^{x'})}\\
&\quad\leq N(1+V(t,\tilde{x}(x)) + V(t,\tilde{x}(x')))^{\frac{1}{k'}} \\
&\quad\leq 3N\max\Big(1,(V(t,\tilde{x}(x)))^{\frac{1}{2\textrm{degree}(q^*)}},(V(t,\tilde{x}(x')))^{\frac{1}{2\textrm{degree}(q^*)}}\Big)^{\frac{2\textrm{degree}(q^*)}{k'}}\\
&\quad\leq 3N\Big(1+(1+V(t,\tilde{x}(x)))^{\frac{1}{2\textrm{degree}(q^*)}} + (1+V(t,\tilde{x}(x')))^{\frac{1}{2\textrm{degree}(q^*)}}\Big)^{\frac{2\textrm{degree}(q^*)}{k'}}
\end{align*}
for all~$t\in[0,T]$,~$x,x'\in\mathbb{R}^n$ and~$\lambda\in[0,1]$. 
In particular,~$(1+V)^{1/(2\textrm{degree}(q^*))}$ is itself a Lyapunov function (more precisely it is a~$(\tilde{b}_{\cdot}^{\cdot},\tilde{\sigma}_{\cdot}^{\cdot},\alpha_{\cdot},\beta_{\cdot},p^*,(1+V_0)^{1/(2\textrm{degree}(q^*))})$-Lyapunov function if~$V$
is a~$(\tilde{b}_{\cdot}^{\cdot},\tilde{\sigma}_{\cdot}^{\cdot},\alpha_{\cdot},\beta_{\cdot},p^*,V_0)$-Lyapunov function) that satisfies the conditions in the Definition~\ref{lyadiff} for the~$(p,k'/(2\textrm{degree}(q^*)))$-Lyapunov derivatives of~$h$. Therefore, we may consider~$(1+V)^{1/(2\textrm{degree}(q^*))}$ to be equal to~$V^{s,T}$ appearing in~\eqref{tstweq}. Similar statements can be made for~$V_i,\hat{V}_{l_i}^{s,T}$ in~\eqref{tstweq}. With such Lyapunov functions in~\eqref{tstweq}, the aforementioned program using dominated convergence theorem for the right-hand side of~\eqref{markov1} may be carried out. More specifically, we may apply Proposition~\ref{huddethm}\ref{hp2} and the arguments in the proof of Corollary~\ref{huddecor0} for the dominating function (the supremum in~$r$ of the integrand on the right-hand side of~\eqref{markov1}), to obtain the limit~\eqref{bslim} as~$h\rightarrow 0$. By Theorem~\ref{fk2}\ref{tstwiii} and an induction argument for the derivatives in~$x$, the limit~\eqref{bslim} is continuous in~$s$. 
In particular, for any~$x$, the function~$[0,T]\ni s\mapsto\mathbb{E}[\bar{g}(\bar{X}_{T-s}^{s,(x,0,0)})]$ is continuous and has continuous left-derivative which converges to a limit as~$s\rightarrow0$. Therefore it is continuously differentiable, 
of course with derivative given by~\eqref{bslim}. By (joint) continuity of~\eqref{bslim},~$(s,x)\mapsto\mathbb{E}[\bar{g}(\bar{X}_{T-s}^{s,(x,0,0)})]$ is a classical solution to~\eqref{asfkeq0} on~$[0,T]\times\mathbb{R}^n$. \\
For uniqueness, let~$\hat{v}:[0,T]\times\mathbb{R}^n$ be a classical solution to~\eqref{asfkeq0} with~$\hat{v}(T,\cdot)=g$. 
Applying It\^{o}'s rule 
and taking expectations, it holds for~$(t,x)\in[0,T]\times\mathbb{R}^n$ that
\begin{align*}
&\mathbb{E}\bigg[\hat{v}(T,X_{T-t}^x)\exp\bigg(-\int_0^{T-t} c(t+r,X_r^x)dr \bigg)\bigg] -\hat{v}(t,x)\\
&\quad= \mathbb{E}\bigg[\int_0^{T-t}(\partial_t + L-c)\hat{v}(t+s,X_s^x)\exp\bigg(-\int_0^s c(t+r,X_r^x)dr \bigg)ds\bigg]\\
&\quad= \mathbb{E}\bigg[\int_0^{T-t}-f(t+s,X_s^x)\exp\bigg(-\int_0^s c(t+r,X_r^x)dr \bigg)ds\bigg],
\end{align*}
which concludes after substituting~$\hat{v}(T,\cdot)=g$.
\end{proof}

\section{Alternative assumptions for time-independent, nonrandom coefficients}\label{altassumps}

In the following, we restrict to the case where~$b$ and~$\sigma$ are nonrandom and time-independent, so that we may use Theorem~V.39 in~\cite{MR2273672} in order to get rid of the need for bounds on function values on line segments in terms of the endpoint values. In particular, more local conditions in place of~\eqref{a3},~\eqref{a4},~\eqref{intermid} and~\eqref{gbylya} are obtained. These conditions are stated precisely after first giving Lemma~\ref{classprob} where we use the aforementioned reference. 

\begin{lemma}\label{classprob}
Let~$p\in\mathbb{N}$,~$b,\sigma$ be independent of~$\omega,t$ and suppose they are continuously differentiable up to order~$p$ with locally Lipschitz derivatives. 
There exists~$\Omega\times\Delta_T\times\mathbb{R}^n\ni (\omega,t,x)\mapsto \hat{X}_t^x\in\mathbb{R}^n$ that is~$\mathbb{P}$-a.s. continuously differentiable in~$x$ up to order~$p$ and is for any~$x$ indistinguishable from the corresponding derivatives in probability of~$X_{\cdot}^x$.
\end{lemma}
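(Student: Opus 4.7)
The plan is a direct application of Theorem~V.39 in~\cite{MR2273672}. Our hypotheses on $b$ and $\sigma$ (time-homogeneity, $C^p$ regularity with locally Lipschitz $p^{\textrm{th}}$ derivatives) are precisely those of that theorem. It produces, for each $s\in[0,T]$, a version $\hat{X}_t^x$ of $X_t^{s,x}$ which, off a single $\mathbb{P}$-null set, is jointly continuous in $(t,x)\in\Delta_T\times\mathbb{R}^n$, $p$-times continuously differentiable in $x$, and whose classical partial derivatives $\partial_x^\alpha \hat{X}_t^x$ for $\abs*{\alpha}\leq p$ are jointly continuous in $(t,x)$ and, for each fixed $x$, are continuous adapted strong solutions of the linear SDE obtained by formally differentiating~\eqref{sde} in $x$ a total of $\abs*{\alpha}$ times, initialised at the corresponding derivative of the identity map.

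The remaining step is to identify each $\partial_x^\alpha \hat{X}_\cdot^x$ with the $t$-uniform derivative in probability $\partial^{(\kappa_\alpha)}X_\cdot^x$ introduced just after~\eqref{mdifffin}. By Theorem~4.10 in~\cite{MR1731794} and the inductive observation made around~\eqref{firsdd}, $\partial^{(\kappa_\alpha)}X_\cdot^x$ also exists as a continuous adapted solution of the same formally differentiated SDE with the same initial condition. Because the derivatives of $b$ and $\sigma$ up to order $p$ are locally Lipschitz, the coefficients of that SDE, regarded as functions of the jet $(X,\partial^{(\kappa_{\beta_1})}X,\dots,\partial^{(\kappa_\alpha)}X)$, are locally Lipschitz in the jet variable, so I would derive pathwise uniqueness by the standard localisation using stopping times $\tau_k=\inf\{t\geq 0:\abs*{X_t^x}+\sum_{\abs*{\beta}<\abs*{\alpha}}\abs*{\partial^{(\kappa_\beta)}X_t^x}>k\}$. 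Hence, for each $x\in\mathbb{R}^n$, the processes $\partial_x^\alpha \hat{X}_\cdot^x$ and $\partial^{(\kappa_\alpha)}X_\cdot^x$ are $\mathbb{P}$-indistinguishable on $[0,T-s]$, which is the required identification.

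I expect the main obstacle to lie in this pathwise uniqueness step, since the coefficients of the differentiated SDE are random, time-dependent, and only locally bounded in $t$ (they involve higher spatial derivatives of $b,\sigma$ evaluated along $X_t^x$). The localisation via $\tau_k$ is what makes the argument go through: on $\{t\leq\tau_k\}$ those coefficients are bounded and Lipschitz in the jet, Gr\"{o}nwall's inequality yields uniqueness up to $\tau_k$, and $\tau_k\rightarrow\infty$ $\mathbb{P}$-a.s. by the $\mathbb{P}$-a.s. continuity (and non-explosion) of $X_t^x$ and of the lower-order derivative processes, the latter handled inductively by the same reasoning. The conclusion of the lemma then follows by declaring the version $\hat{X}_t^x$ supplied by Theorem~V.39 to be the sought-after modification, after discarding the exceptional null set together with the countably many $\mathbb{P}$-null sets arising from the $x$-wise identifications along a countable dense set in $\mathbb{R}^n$; joint continuity in $(t,x)$ of $\partial_x^\alpha \hat{X}_t^x$ extends the identification to all $x$.
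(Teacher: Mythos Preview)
Your proposal is correct and follows essentially the same route as the paper: invoke Theorems~V.38--V.39 of~\cite{MR2273672} for the existence of a classically $C^p$ flow $\hat{X}_t^x$ whose partial derivatives solve the formally differentiated systems, observe via Theorem~4.10 of~\cite{MR1731794} that the derivatives in probability solve the same systems, and conclude by uniqueness. The paper cites uniqueness directly from those references, with a footnote offering the alternative of applying Theorem~1.2 of~\cite{MR1731794} to the linear-in-top-derivative system; your localisation-plus-Gr\"onwall argument is exactly that alternative spelled out. One remark: the final countable-dense-set step is unnecessary, since the statement only asserts indistinguishability of $\partial^\alpha\hat{X}_\cdot^x$ and $\partial^{(\kappa_\alpha)}X_\cdot^x$ for each fixed $x$, which your $x$-wise uniqueness already delivers.
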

\begin{proof}
By Theorem~V.38 and~V.39 in~\cite{MR2273672}, continuously differentiable~$\hat{X}_t^x$  up to order~$p$ exists. Moreover, it satisfies~\eqref{sde} and
~$\hat{X}_{\cdot}^x$ is indistinguishable from~$X_{\cdot}^x$. 
The partial derivatives of~$\hat{X}_{\cdot}^x$ satisfy the systems given by formal differentiation of~\eqref{sde}. 
On the other hand, derivatives in probability of~$X_t^x$ as in~\cite[Theorem~4.10]{MR1731794} and Theorem~3.3 above satisfy the same system. 
Therefore by uniqueness in the aforementioned references\footnote{Alternatively, since these systems have terms on right-hand sides that are continuous functions of the partial derivatives and are in particular at most linear in the highest order derivative (see the beginning of proof for Theorem~\ref{seclem}), uniqueness holds by continuity of~$X_t^x$ in~$t$,~\eqref{firncon} in Lemma~\ref{firstlem}, induction in the number of derivatives and Theorem~1.2 in~\cite{MR1731794} with~$K_t(R) = K_t(1)$ constant in~$t$.}, it holds that~$\partial^{\alpha}\hat{X}_\cdot^x$ are the unique solutions to their respective systems for all time and are therefore indistinguishable from the corresponding derivatives in probability~$\partial^{(\kappa_{\alpha})}X_{\cdot}^x$ for every~$x$ and multiindex~$\alpha$ with~$0\leq\abs{\alpha}\leq p$. 
\end{proof}

The precise assumptions and conditions considered in this section in place of Assumptions~\ref{A1lya},~\ref{A2lya} and Definition~\ref{lyadiff} are as follows.
\begin{assumption}\label{A1prime}
The functions~$b,\sigma$ are independent of~$\omega,t$, they admit locally Lipschitz first derivatives and it holds that~$O=\mathbb{R}^n$. There exist continuous~$G:\mathbb{R}^n\rightarrow[0,\infty)$ such that for any~$s\in[0,T]$, there exist finite sets~$I_0,I_0'\subset\mathbb{N}$,~$\tilde{n}_i\in\mathbb{N}$, open~$\tilde{O}_i\subseteq\mathbb{R}^{\tilde{n}_i}$ for all~$i\in I_0\cup I_0'$, locally bounded mappings~$M:(0,\infty)\rightarrow(0,\infty)$,~$(\bar{x}_i:\mathbb{R}^n\rightarrow\tilde{O}_i)_{i\in I_0\cup I_0'}$ and Lyapunov functions~$(V_i:\Omega\times[0,T]\times\tilde{O}_i\rightarrow(0,\infty))_{i\in I_0\cup I_0'}$ satisfying that
\begin{equation}\label{a3a4rep}
\sum_i(\abs{\partial_ib(x)} + \|\partial_i\sigma(x)\|^2) \leq G(x)
\end{equation}
for all~$x\in\mathbb{R}^n$ and for any~$m>0$,~$x\in\mathbb{R}^n$, stopping times~$t\leq T-s$, inequality~\eqref{a5} holds~$\mathbb{P}$-almost surely.
\end{assumption}
Assumption~\ref{A1prime} will be used in Lemma~\ref{alters1} and Theorem~\ref{alters2},~\ref{alters3},~\ref{alters4} as a replacement for Assumption~\ref{A1lya}. The only substantial difference in this Assumption~\ref{A1prime} is that~\eqref{a3},~\eqref{a4} are replaced by the bound~\eqref{a3a4rep} above. Moreover, the following Assumption~\ref{A2prime} will used in place of Assumption~\ref{A2lya}.
\begin{assumption}\label{A2prime}
There exists~$p\in\mathbb{N}_0$ such that~$b,\sigma\in C^p$. In addition, for all~$s\in[0,T]$ and~$k\geq 2$, there exist~$\hat{n}_k\in\mathbb{N}$, open~$\hat{O}_k\subset\mathbb{R}^{\hat{n}_k}$, a mapping~$\hat{x}_k:O\rightarrow\hat{O}_k$, a constant (in particular in~$s$)~$M'>0$ and Lyapunov function~$\hat{V}_k^{s,T}:\Omega\times[0,T-s]\times\hat{O}_k\rightarrow(0,\infty)$ satisfying for any~$x,x'\in O$ and multiindices~$\alpha$ with~$2\leq \abs{\alpha}\leq p$ that it holds~$\mathbb{P}$-a.s. that
\begin{equation*}
\abs*{\partial^{\alpha}b(X_t^{s,x})} + \|\partial^{\alpha}\sigma (X_t^{s,x})\|^2 \leq M' (1+\hat{V}_k^{s,T}(t,\hat{x}_k(x)))^{\frac{1}{k}}
\end{equation*}
for all~$t\in[0,T-s]$.
\end{assumption}
The difference between Assumptions~\ref{A2lya} and~\ref{A2prime} is that~\eqref{intermid} is replaced by the inequality above. Lastly, the restriction for~$f,c,g$ to have Lyapunov derivatives (as in Definition~\ref{lyadiff}) may be relaxed. In Theorems~\ref{alters3},~\ref{alters4}, they will only be required to be Lyapunov derivatives in the following sense.
\begin{definition}\label{lyadiffprime}
For~$p\in \mathbb{N}$,~$k>1$,~$h:\Omega\times[0,T]\times\hat{O}\rightarrow \mathbb{R}$ satisfying~$\mathbb{P}$-a.s. that~$h(t,\cdot)\in C^p(\hat{O})$ for all~$t\in[0,T]$, we say that~$h$ has~$(p,k)'$-Lyapunov derivatives if there exist~$(V^{s,T})_{s\in[0,T]}$ local in~$s$ (as in Definition~\ref{locdef}), locally bounded~$\tilde{x}$ and constant~$N>0$ such that for any~$s\in[0,T]$ and multiindices~$\alpha$ with~$0\leq \abs{\alpha}\leq p$, it holds~$\mathbb{P}$-a.s. that
\begin{equation}
\abs{\partial^{\alpha}h(s+t,X_t^{s,x})} \leq N(1+V^{s,T}(t,\tilde{x}(x)))^{\frac{1}{k}}
\end{equation}
for all stopping times~$t\leq T-s$,~$x\in O$ and~$\lambda\in[0,1]$.
\end{definition}
The main results of this section are stated as Theorems~\ref{alters1},~\ref{alters2},~\ref{alters3},~\ref{alters4} as follows.
\begin{theorem}[Alternative assumptions to Theorem~\ref{seclem}
]\label{alters1}
Under Assumptions~\ref{A1prime},~\ref{A2prime}, for any~$s\in[0,T]$, constants~$1\leq l\leq p-1$,~$k_1>0$, there exists~$i^*\in\mathbb{N}$,~$\nu\geq\frac{k_1}{2}$,~$\{l_i\}_{i\in \{1,\dots,i^*\}} \subset (0,\infty)$ and a finite order polynomial~$q_0$, the degree of which is independent of~$s,V_i,\hat{V}_k^{s,T}$, such that~\eqref{kappar1},~\eqref{kappar2},~\eqref{kappar3} hold for all~$x\in\mathbb{R}^n$,~$T_0\in[s,T]$,~$r\in\mathbb{R}\setminus\{0\}$,~$\kappa_i\in\mathbb{R}^n$,~$\abs{\kappa_i}=1$,~$1\leq i\leq l+1$,~$x+r\kappa_{l+1}\in\mathbb{R}^n$, where~$\kappa = (\kappa_i)_{1\leq i\leq l}$,~$\bar{\kappa}=(\kappa_i)_{1\leq i\leq l+1}$ and~$q:\mathbb{R}^n\times\mathbb{R}^n\rightarrow\mathbb{R}$ is given by
\begin{align*}
q(y,y') &= \mathbb{E}\bigg[ q_0\bigg(\bigg(\int_0^1V_i(0,\bar{x}_i(\lambda y + (1-\lambda)y'))d\lambda\bigg)_{i\in I_0\cup I_0'} , \\
&\qquad\int_0^1\hat{V}_{l_1}^{s,T}(0,\hat{x}_{l_1}(\lambda y + (1-\lambda)y'))d\lambda ,(V_i(0,\bar{x}_i(y)))_{i\in I_0\cup I_0'}, \\
&\qquad (\hat{V}_{l_i}^{s,T}(0,\hat{x}_{l_i}(y)))_{i\in\{2,\dots, i^*\}} ,(\hat{V}_{l_i}^{s,T}(0,\hat{x}_{l_i}(y')))_{i\in\{2,\dots,i^*\}}\bigg)\bigg].
\end{align*}
If~$V_i$ and~$\hat{V}_k^{s,T}$ are local in~$s$ for every~$i,k$, then the form of the polynomial~$q_0$ is independent of~$s$.
\end{theorem}
\begin{proof}
The proof strategy is more or less the same as in the one used in the previous proof for Theorem~\ref{seclem}. The difference is encapsulated by following along the proof of Lemma~\ref{firstlem} using the same notation as before. Note first~\eqref{firncon} follows unperturbed. 
By Lemma~\ref{classprob}, classical derivatives are indistinguishable from derivatives in probability and we use the properties of both without changing the notation in the following. 
As consequence and in place of~\eqref{ipo}, it holds that
\begin{equation*}
dX_{t(\kappa)}^{(r)} = r\int_0^1 ( X_{t(\kappa)}^{x+\lambda r\kappa}\cdot\nabla) b(X_t^{x+\lambda r\kappa}) d\lambda dt + r\int_0^1(X_{t(\kappa)}^{x+\lambda r\kappa} \cdot\nabla)\sigma(X_t^{x+\lambda r\kappa})d\lambda dW_t.
\end{equation*}
Note that since for every~$t$ and~$\mathbb{P}$-almost all~$\omega$, the functions~$X_t^x$,~$X_{t(\kappa)}^x$ are continuous in $x$, the integrands on the right-hand side are $\mathcal{B}([0,T])\otimes \mathcal{F}\otimes \mathcal{B}([0,1])$-measurable by Lemma~4.51 in \cite{MR2378491} and the integrals (in~$\lambda$) themselves are adapted. For any $\hat{k}\geq 1$, by~\eqref{a3a4rep}, the drift coefficient satisfies
\begin{align*}
&2rX_{t(\kappa)}^{(r)} \cdot \int_0^1( X_{t(\kappa)}^{x+\lambda r\kappa}\cdot\nabla) b(X_t^{x+\lambda r\kappa}) d\lambda \\
&\quad \leq 2r\abs{X_{t(\kappa)}^{(r)}} \int_0^1 \abs{(X_{t(\kappa)}^{x+\lambda r\kappa}\cdot\nabla) b(X_t^{x+\lambda r\kappa})} d\lambda\\
&\quad \leq 2r \abs{X_{t(\kappa)}^{(r)}} \int_0^1 \abs{X_{t(\kappa)}^{x+\lambda r\kappa}} G(X_t^{x+\lambda r\kappa}) d\lambda\\
&\quad \leq \int_0^1 \Big( \abs{X_{t(\kappa)}^{(r)}}^2  +  r^2 \abs{X_{t(\kappa)}^{x+\lambda r \kappa}}^2 \Big) G(t,X_t^{x+\lambda r \kappa}) d\lambda
\end{align*}
and the diffusion coefficient satisfies
\begin{equation*}
\left\|r\int_0^1 (X_{t(\kappa)}^{x+\lambda r\kappa}\cdot\nabla) \sigma(X_t^{x+\lambda r\kappa}) d\lambda \right\|^2 \leq  r^2\int_0^1 \abs*{X_{t(\kappa)}^{x+\lambda r\kappa}}^2 G(t,X_t^{x+\lambda r\kappa}) d\lambda.
\end{equation*}
Consequently, Proposition~\ref{huddethm} 
can be applied with
\begin{align*}
& \hat{b}_t = r\int_0^1 (X_{t(\kappa)}^{x+\lambda r\kappa}\cdot \nabla) b(X_{t(\kappa)}^{x+\lambda r\kappa}) d\lambda,\ \hat{\sigma}_t = r\int_0^1 (X_{t(\kappa)}^{x+\lambda r\kappa}\cdot \nabla) \sigma(X_{t(\kappa)}^{x+\lambda r\kappa}) d\lambda,\\
& \hat{\alpha}_t = 
\int_0^1 G(t,X_t^{x+\lambda r \kappa}) d\lambda, \ \hat{\beta}_t = 4(k\vee 1)r^2 \int_0^1 \abs*{X_{t(\kappa)}^{x+\lambda r\kappa}}^2 G(t,X_t^{x+\lambda r \kappa}) d\lambda,\\
& p^* = 2k\vee 2,\ q_1 = \frac{k}{2},\ q_2 = \bigg(\frac{2}{k} - \frac{1}{k\vee1}\bigg)^{-1},\ q_3 = k\vee 1,\ V_0(t,x) = \abs*{x}^2,
\end{align*}
to obtain
\begin{align}
&\mathbb{E}\bigg[\sup_{0\leq u\leq t} \abs*{X_{u(\kappa)}^{(r)}}^k\bigg] \nonumber\\
&\quad\leq C r^k \bigg( \mathbb{E}\Big[e^{q_2
\int_0^t\int_0^1 G(s,X_s^{x+\lambda r \kappa}) d\lambda ds}\Big]\bigg)^{\frac{k}{2q_2}} \nonumber\\
&\qquad \cdot \bigg(\mathbb{E} \bigg[ 1+ 4(k\vee 1)\int_0^t \int_0^1 \abs*{X_{u(\kappa)}^{x+\lambda r\kappa}}^2 G(u,X_u^{x+\lambda r\kappa})d\lambda ds \bigg]^{k\vee 1}\bigg)^{\frac{k}{2(k\vee 1)}}\nonumber\\
&\quad\leq C r^k \bigg( \mathbb{E}\Big[e^{q_2
\int_0^1\int_0^t G(s,X_s^{x+\lambda r \kappa}) ds d\lambda }\Big]\bigg)^{\frac{k}{2q_2}} \nonumber\\
&\qquad \cdot \bigg(\mathbb{E}\bigg[1+\int_0^1 \sup_{0\leq u\leq t} \abs*{X_{u(\kappa)}^{x+\lambda r \kappa}}^{2k\vee2} \bigg(\int_0^t G(u,X_u^{x+\lambda r\kappa})du \bigg)^{k\vee 1} d\lambda\bigg] \bigg)^{\frac{k}{2(k\vee 1)}}\nonumber\\
&\quad\leq C r^k \bigg( \mathbb{E}\Big[e^{q_2
\int_0^1\int_0^t G(s,X_s^{x+\lambda r \kappa}) ds d\lambda }\Big]\bigg)^{\frac{k}{2q_2}}\nonumber\\
&\qquad\cdot\bigg( 1 + \bigg(\int_0^1\mathbb{E}\bigg[\sup_{0\leq u\leq t}\abs*{X_{u(\kappa)}^{x+\lambda r\kappa}}^{4k\vee 4}\bigg] d\lambda \bigg)^{\frac{1}{2}}\nonumber\\
&\qquad\cdot\bigg(\mathbb{E}\bigg[ \int_0^1 \bigg(\int_0^t G(u,X_u^{x+\lambda r\kappa})du\bigg)^{2k\vee2} d\lambda \bigg]\bigg)^{\frac{1}{2}}\bigg)^{\frac{k}{2(k\vee 1)}}.\label{crga1}
\end{align}
After Jensen's inequality, the first expectation on the right-hand side can be dealt by Lemma~\ref{ergodfin} (the same as in the proof of Lemma~\ref{firstlem}). By~\eqref{firncon}, the second expectation has the bound
\begin{equation}\label{crga2}
\int_0^1\mathbb{E}\bigg[\sup_{0\leq u\leq t}\abs*{X_{u(\kappa)}^{x+\lambda r\kappa}}^{4k\vee 4}\bigg] d\lambda \leq \int_0^1 \rho \bigg(1+\sum_{i\in I_0\cup I_0'}\mathbb{E}[V_
i(0,\bar{x}_i(x+\lambda r \kappa))]\bigg)  d\lambda 
\end{equation}
and, by~\eqref{a5} and Lemma~\ref{ergodfin}, the third and last expectation has the bound
\begin{align}
&\mathbb{E}\bigg[\int_0^1 \bigg(\int_0^t G(u,X_u^{x+\lambda r\kappa})du \bigg)^{2k\vee2} d\lambda\bigg] \nonumber\\ 
&\quad\leq C\int_0^1 \mathbb{E} \bigg[\bigg(1 + \sum_{i\in I_0}\int_0^t \log V_i(u,\bar{x}_i(x + \lambda u \kappa)) du \nonumber\\
&\qquad + \sum_{i'\in I_0'}\log V_{i'}(t,\bar{x}_{i'}(x+\lambda r\kappa)) \bigg)^{\!2k\vee 2}\bigg]\! d\lambda\nonumber\\
&\quad\leq C\int_0^1 \bigg( 1 + \sum_{i\in I_0}t^{2k\vee 2-1}\int_0^t \mathbb{E} [V_i(u,\bar{x}_i(x+\lambda r\kappa))]du \nonumber\\
&\qquad + \sum_{i'\in I_0'} \mathbb{E} [V_{i'}(t,\bar{x}_{i'}(x+\lambda r\kappa))] \bigg) d\lambda\nonumber\\
&\quad\leq C \int_0^1 \bigg(1+\sum_{i\in I_0\cup I_0'}\mathbb{E}[V_i(0,\bar{x}_i(x+\lambda r\kappa))] \bigg) d\lambda.\label{crga3}
\end{align}
Gathering Lemma~\ref{ergodfin},~\eqref{crga2} and~\eqref{crga3}, the bound~\eqref{crga1} becomes
\begin{equation*}
\mathbb{E}\bigg[\sup_{0\leq u \leq t} \abs{X_{u(\kappa)}^{(r)}}^k\bigg] \leq C r^k \bigg(\int_0^1 \bigg( 1 + \sum_{i\in I_0\cup I_0'}\mathbb{E}[V_i(0,\bar{x}_i(x+\lambda r\kappa))] \bigg) d\lambda \bigg)^{\frac{k}{2q_2} + \frac{k}{2(k\vee 1)}},
\end{equation*}
which, by definition of~$q_2$, proves that the conclusion of Lemma~\ref{firstlem} holds except with~$W(x,r\kappa) = 1+\sum_{i\in I_0\cup I_0'}\int_0^1\mathbb{E}[V_i(0,\bar{x}_i(x+\lambda r\kappa))]d\lambda$. The same type of arguments may be applied in the proof of Theorem~\ref{seclem} to obtain the assertions here. In particular, the main point is to use that expressions of the form~$h(s+u,X_u^{s,x'}) - h(s+u,X_u^{s,x}) = \int_0^1\nabla h(s+u,\lambda X_u^{s,x'} + (1-\lambda) X_u^{s,x}) \cdot (X_u^{s,x'} - X_u^{s,x}) d\lambda$ may be replaced by~$\int_0^1 \nabla h(t,X_u^{x+\lambda r\kappa}) \cdot r X_{u(\kappa)}^{x+\lambda r\kappa} d\lambda$. The detailed arguments are omitted.
\end{proof}
\begin{theorem}[Alternative assumptions to Theorem~\ref{fk2}
]\label{alters2}
Let Assumptions~\ref{A1prime},~\ref{A2prime} hold. Let~$f:\Omega\times[0,T]\times\mathbb{R}^n\rightarrow\mathbb{R}$,~$c\in\Omega\times[0,T]\times\mathbb{R}^n\rightarrow[0,\infty)$,~$g:\Omega\times\mathbb{R}^n\rightarrow\mathbb{R}$ be such that~$f(\cdot,x),c(\cdot,x)$ are~$\mathcal{F}\otimes\mathcal{B}([0,T])$-measurable functions for every~$x\in\mathbb{R}^n$, satisfying for any~$\omega\in\Omega$ that~$\int_0^T\sup_{x\in\mathbb{R}^n}(\abs{c(t,x)}+\abs{f(t,x)})dt<\infty$ for every~$R>0$ and~$f(t,\cdot),c(t,\cdot),g\in C^p$ for all~$t\in[0,T]$. Assume there exists~$k_2>1$ such that~$f$ and~$g$ have~$(p,k_2)'$-Lyapunov derivatives. There exists~$K>1$ such that if for any~$1<k'<K$,~$c$ has~$(p,k')'$-Lyapunov derivatives, then the following statements hold.
\begin{enumerate}[label=(\roman*)]
\item For~$u$ given by~\eqref{etstw}, defined for~$(s,x)\in[0,T]\times \mathbb{R}^n$ and stopping times~$t\leq T-s$, the expectation~$\mathbb{E}[u(s,t,x)]$ is continuously differentiable in~$x$ up to order~$p$.
\item For every multiindex~$\beta$ with~$0\leq \abs{\beta}\leq p$, there exists a finite order polynomial~$q^*$, the degree of which is independent of all of the Lyapunov functions in Assumptions~\ref{A1prime},~\ref{A2prime} and of the Lyapunov derivatives, such that for~$(s,x)\in[0,T]\times \mathbb{R}^n$ and all stopping times~$t\leq T-s$, it holds that
\begin{align*}
&\abs{\partial_x^{\beta}\mathbb{E}[u(s,t,x)]} \\
&\quad\leq q^*\bigg(\int_0^1 V(0,\bar{x}(\lambda x + (1-\lambda)x'))d\lambda),\int_0^1 V^{s,T}(0,\tilde{x}(\lambda x + (1-\lambda)x')) d\lambda,\\
&\qquad \int_0^1 \hat{V}_{l_1}^{s,T}(0,\hat{x}_{l_1}(\lambda x+(1-\lambda) x')) d\lambda, V(0,\bar{x}(x)), V^{s,T}(0,\tilde{x}(x)),\\
&\qquad \hat{V}_{l_i}^{s,T}(0,\hat{x}_{l_i}(x)): i \in I^*\bigg),
\end{align*}
where~$I^*\subset\mathbb{N}$ is finite,~$l_i>0$ and~$\tilde{x}$,~$V^{s,T}$ associated to the~$(\abs{\beta},k_2)'$-Lyapunov derivatives of~$f,g$ and the~$(\abs{\beta},k')'$-Lyapunov derivatives of~$c$ are representative across any and all of~$\{f,c,g\}$ and~$k'\in K_0\subset (1,K)$ for some finite~$K_0$.
\item Let Assumption~\ref{A3lya} hold. Suppose~$f,c,g$ are nonrandom and satisfy Assumption~\ref{Afcg}. Suppose that the families of Lyapunov functions in Assumptions~\ref{A1prime},~\ref{A2prime},~\ref{Afcg} and for the Lyapunov derivatives of~$c$ are local in~$s$. For any multiindex~$\alpha$ with~$0\leq\abs{\alpha}\leq p$, the function~$\Delta_T\times\mathbb{R}^n\ni ((s,t),x)\mapsto\abs{\partial_t^{\alpha}\mathbb{E}[u(s,t,x)]}$ is locally bounded and if~$p\geq 2$, then for any~$R>0$, there exists a constant~$N>0$ such that~\eqref{cop1} holds for all~$s,s'\in(0,T)$ and~$x\in B_R$.
\end{enumerate}
\end{theorem}
For the proof of Theorem~\ref{alters2}, Lemma~\ref{lemtstw} can easily be modified using what has already been mentioned in the proof of Theorem~\ref{alters1}, so that Theorem~\ref{fk2}\ref{tstwi} holds. Proofs for the other assertions of Theorem~\ref{fk2} follow in very similar ways. The same applies for the following Theorems~\ref{alters3},~\ref{alters4}. The precise arguments are therefore omitted.
\begin{theorem}[Alternative assumptions to Theorem~\ref{fk}]\label{alters3}
Let the assumptions of Theorem~\ref{alters2} hold with~$p\geq 2$ and let Assumption~\ref{A3lya} hold. Let~$f,c,g$ be nonrandom and satisfy Assumption~\ref{Afcg} (with~$(p,k)'$-Lyapunov derivatives replacing~$(p,k)$-Lyapunov derivatives). There exists~$K>1$ such that if~$c$ has~$(p,k')'$-Lyapunov derivatives for any~$1<k'<K$ and the families of Lyapunov functions in Assumptions~\ref{A1prime},~\ref{A2prime},~\ref{Afcg} and for the Lyapunov derivatives of~$c$ are local in~$s$, then for~$v:[0,T]\times\mathbb{R}^n\rightarrow\mathbb{R}$ given by~\eqref{actualetstw} with~$u$ as in~\eqref{etstw}, the equation~\eqref{asfkeq0} holds almost everywhere in~$(0,T)\times\mathbb{R}^n$.
\end{theorem}
\begin{theorem}[Alternative assumptions to Theorem~\ref{classical}]\label{alters4}
Let the assumptions of Theorem~\ref{alters2} hold with~$p\geq 2$ and let Assumption~\ref{A3lya} hold. Assume~$b,\sigma$ are independent of~$t$. Let~$f,c,g$ satisfy Assumption~\ref{Afcg} and be continuous. There exists~$\bar{K}>1$ such that if
\begin{enumerate}
\item for any~$k'\in(1,\bar{K}]$,~$f,c,g$ have~$(p,k')'$-Lyapunov derivatives,
\item for any family of Lyapunov functions~$(\tilde{V}^{s,T})_s$ and corresponding mappings~$(\hat{x})_s$ in Assumptions~\ref{A1prime},~\ref{A2prime} and Definition~\ref{lyadiffprime}, it holds that~$(\tilde{V}^{s,T})_s$ is local in~$s$ (as in Definition~\ref{locdef}) and there exists a constant~$C>0$ such that for any~$s$, it holds~$\mathbb{P}$-a.s. that~$\tilde{V}^{s,T}(0,\hat{x}(X_t^x))\leq C(1+\tilde{V}^{s,T}(t,\hat{x}(x)))$ for all~$x\in\mathbb{R}^n$,~$t\in[0,T]$,
\end{enumerate}
then the function~$v$ given by~\eqref{actualetstw} and~\eqref{etstw} for all~$(t,x)\in[0,T]\times\mathbb{R}^n$ is the unique classical solution to~\eqref{asfkeq0} on~$[0,T]\times\mathbb{R}^n$, in the sense that~$v\in C^{1,2}$,~$\partial_tv$,~$\nabla\!_x v$,~$D_x^2v$ are continuous,~$v$ satisfies~\eqref{asfkeq0} and it is the only such function satisfying~$v(T,\cdot)=g$.
\end{theorem}

\section{Weak convergence rates for approximations under Lyapunov conditions}\label{weakrates}
Here, the results in Section~\ref{momentestimates} are used with the exponential integrability property of stopped increment-tamed Euler-Maruyama schemes from~\cite{MR3766391} in order to establish weak convergence rates for SDEs with non-globally monotone coefficients. The well-known proof (see~\cite[Theorem~14.5.2]{MR1214374}), establishing weak rates for the Euler-Maruyama scheme approximating~\eqref{sde} with globally Lipschitz coefficients, requires bounds on derivatives of the expectation~\eqref{actualetstw}, the Kolmogorov equation~\eqref{asfkeq0} and moment bounds on the discretization. Although analogous requirements have mostly (beside continuous differentiability of~\eqref{actualetstw} in~$t$) been shown to be met in the setting here, 
the It\^{o}-Alekseev-Gr\"{o}bner formula of~\cite{huddeiag} is used (in the form of Proposition~\ref{iagthm}) for a more direct proof, which uses moment estimates on derivative processes as the main prerequisites. 
Along the way, strong completeness (see e.g.~\cite{MR2857244} for a definition) of the derivative SDEs as in~\eqref{firsdd} (and its higher order analogues) are shown in Lemma~\ref{weakconvlya0} using a result of~\cite{https://doi.org/10.48550/arxiv.1309.5595}. The same assertions as those in Lemma~\ref{weakconvlya0} up to order~$2$ have appeared recently in~\cite{hut1} under different assumptions. The approach here uses the results in~\cite{MR2273672} for continuous differentiability in initial condition as a starting point and consequently requires (at least at face value) the underlying space to be all of~$\mathbb{R}^n$. 
Before the aforementioned strong completeness result, a local H\"older continuity in time result in the strong~$L^p(\mathbb{P})$ sense for derivatives to our SDE is shown in Lemma~\ref{firstlembel}.

We begin by stating the numerical scheme and assumptions from~\cite{MR3766391} (amongst which is a Lyapunov-type condition) used for its exponential integrability. Assumptions about the relationship between the Lyapunov(-type) functions there and those in Assumptions~\ref{A1lya},~\ref{A2lya} are stated alongside, as well as some assumptions from Proposition~\ref{iagthm}. Lemma~\ref{weakconvlya0} serves to verify the rest of the assumptions in Proposition~\ref{iagthm} 
for use in proving the main Theorem~\ref{weakconvlya}. More specifically, Lemma~\ref{weakconvlya0} verifies the continuous differentiability conditions and the finiteness in expectation conditions as assumed in Proposition~\ref{iagthm}.

\begin{assumption}\label{A4lya}
\begin{enumerate}[label=(\roman*)]
\item \label{jgi1} 
The filtration~$\mathcal{F}_t$ satisfies~$\mathcal{F}_t = \sigma(\mathcal{F}_0\cup \sigma(W_s: s\in[0,t]) \cup\{ A\in \mathcal{F}:\mathbb{P}(A) = 0\})$ and that~$\mathcal{F}_0$ and~$\sigma(W_s: s\in[0,T])$ are independent. It holds that~$O=\mathbb{R}^n$ and~$b,\sigma$ are independent of $\omega,t$. 


\item \label{jgi2} There exist~$\gamma,\rho\geq 0$,~$\gamma',c'>0$,~$\xi,c >  1$,~$\bar{U}_0\in\mathbb{R}$,~$U\in C^2(\mathbb{R}^n,[0,\infty))$,~$\bar{U}\in C(\mathbb{R}^n)$ such that~$\bar{U} > \bar{U}_0$,~$U(x)\geq c'\abs{1+x}^{\gamma'}$ 
and
\begin{align}
&\sup_{\substack{\kappa_1,\dots,\kappa_j\in\mathbb{R}^n\setminus\{0\}:\\
                  \abs{\kappa_1} =\dots = \abs{\kappa_j} = 1}}
 \abs*{\sum_{i_1,\dots,i_j = 1}^n \partial_{i_1} \dots\partial_{i_j}(U(x) - U(y)) (\kappa_1)_{i_1}\dots(\kappa_j)_{i_j}}\nonumber\\
&\qquad\leq c\abs{x-y}\bigg(1+\sup_{\lambda\in[0,1]} \abs{U(\lambda x + (1-\lambda) y)}\bigg)^{(1-\frac{j+1}{\xi})\vee 0},\nonumber\\
& \abs{\partial^{\alpha}b(x)} + \|\partial^{\alpha}\sigma(x)\| + \abs{\bar{U}(x)} \leq c(1+U(x))^{\gamma}, \label{bubound}\\
&\frac{\abs{\bar{U}(x) - \bar{U}(y)}}{\abs{x-y}} \leq c(1+\abs{U(x)}^{\gamma} + \abs{U(y)}^{\gamma}),\nonumber\\
&L U(x) + \frac{1}{2}\|\sigma^{\top} \nabla U(x) \|^2 + \bar{U}(x) \leq \rho U(x).\nonumber
\end{align}
for all~$x,y\in\mathbb{R}^n$,~$j \in\{0,1,2\}$ and multiindices~$\alpha$ with~$0\leq \abs{\alpha}\leq 2$. 
\item \label{partition}For any~$\theta \in \Theta := \{\theta = (t_0,\dots,t_{n^*}):n^*\in\mathbb{N}, t_k\in[0,T], t_k < t_{k+1},k\in\{1,\dots,n^*-1\}, t_0 = 0, t_{n^*} = T\}$, the function~$Y_{\cdot}^{\theta}:\Omega\times[0,T]\rightarrow\mathbb{R}^n$ is an~$\mathcal{F}_t$-adapted,~$\mathbb{P}$-a.s. continuous process satisfying~$\sup_{\theta\in\Theta}\mathbb{E}[e^{U(Y_0^{\theta})}] <\infty $ and~$\mathbb{P}$-a.s. that
\begin{align}
Y_t^{\theta} &= Y_{t_k}^{\theta} + \mathds{1}_{\{ y:\abs{y} < \exp(\abs{\log \sup_k t_{k+1} - t_k}^{\frac{1}{2}}) \}}(Y_{t_k}^{\theta})\nonumber\\
&\quad\cdot\bigg[\frac{b(Y_{t_k}^{\theta}) (t-t_k) + \sigma(Y_{t_k}^{\theta})(W_t - W_{t_k})}{1+\abs{b(Y_{t_k}^{\theta}) (t-t_k) + \sigma(Y_{t_k}^{\theta})(W_t - W_{t_k})}^{q'}}\bigg]\label{stotamapp}
\end{align}
on~$t\in(t_k,t_{k+1}]$ for each~$k\in\{0,\dots,n^*-1\}$, where~$q' \geq 3$.

\item \label{ltfirst} 
Assumptions~\ref{A1lya} and~\ref{A2lya} hold with~$p\geq 3$. 
There exists~$0<l^*\leq1$ such that 
for any~$V'\in\{V_i,\hat{V}_k^{s,T}:i\in I_0\cap I_0, s\in[0,T],2\leq\abs{\alpha}\leq p-1,k\geq 2\}$, 
it holds~$\mathbb{P}$-a.s. that 
\begin{align}
V' (0,\tilde{x}'(X_{s,\tau}^y))^{l^*} &\leq C(1+V'(\tau-s,\tilde{x}'(y))), \nonumber\\
V'(0,\tilde{x}'(y))^{l^*} &\leq C(1 + e^{U(y)e^{-\rho T}})\nonumber
\end{align}
for all~$s \in[0, T]$, stopping times~$\tau \leq T-s$,
~$y\in \cup_{\theta\in\Theta}\textrm{Range}(Y_{\cdot}^{\theta})$, where~$\tilde{x}' = \bar{x}_i$ if~$V' = V_i$,~$\tilde{x}' =\hat{x}_k $ otherwise and~$X_{s,t}^y$ solves
\begin{equation}\label{stxsys}
X_{s,t}^y = y + \int_s^t b(X_{s,u}^y) du + \int_s^t \sigma(X_{s,u}^y) dW_u.
\end{equation}

\end{enumerate}
\end{assumption}

\begin{remark}\label{before51}
\begin{enumerate}[label=(\alph*)]
\item Assumption~\ref{A4lya}\ref{jgi2} implies that the mapping~$(t,x,y)\mapsto e^{U(x)e^{-\rho t} + y}$ is a Lyapunov function in the sense of~\cite[Theorem~3.5]{MR2894052} for an extended system (see the proof of Corollary~3.3 in~\cite{MR4260476}), so that for all~$s\in[0,T]$,~$x\in\mathbb{R}^n$, there exists a unique up to distinguishability,~$\mathcal{F}_t$-adapted,~$\mathbb{P}$-a.s. continuous solution to~\eqref{stxsys} 
and for~$t\in[s,T]$ it holds~$\mathbb{P}$-a.s. that~$X_{t,T}^{X_{s,t}^x} = X_{s,T}^x$. 
\item In Assumption~\ref{A4lya}\ref{jgi1}, the assertions about~$\mathcal{F}_t$ are essentially from~\cite{huddeiag}. We set~$O$ to be the whole space and fix~$b$ and~$\sigma$ to be time-independent and nonrandom in order to use continuous differentiability in initial value from~\cite{MR2273672} and to use the exponential integrability results of~\cite{MR3766391}. 
\item Assumptions~\ref{A4lya}\ref{jgi2} and~\ref{A4lya}\ref{partition} also follow closely the assumptions in~\cite{MR4079431,MR3766391}. Here, two things are of note. Firstly,~$q'$ is assumed to be greater than or equal to~$3$ rather than~$1$ in the denominator of the expression for~$Y_t^{\theta}$; this assumption is made in order to ensure well-behavedness of some higher order terms in the It\^{o}-Alekseev-Gr\"{o}bner expansion such that weak convergence rate of order~$1$ is attained. Secondly, the lower bound~$U\geq c'\abs{1+x}^{\gamma'}$ and~\eqref{bubound} are not strictly necessary. This is useful for determining that the main assumptions genuinely generalize the globally Lipschitz case, where the Lyapunov functions are polynomial and~$U$ grows like~$\log x$. More precise generalizing assumptions are given in Remark~\ref{remprof}. Note that these in turn relax the regularity conditions on~$b,\sigma$ in~\cite{MR1214374} for order one weak convergence rates.

\item The Lipschitz estimate on~$U$ with~$j = 0$ in~\ref{jgi2} easily gives that~$U$ is polynomially bounded, so that the set under the indicator function in~\ref{partition} indeed satisfies the assumptions in~\cite{MR3766391}, as used in~\cite{MR4079431,MR3766391}. 
\item Item~\ref{ltfirst} (and in general Assumption~\ref{A4lya}) are easily satisfied by the examples mentioned here and in particular if all of the Lyapunov functions have~$V_0 = e^{U(x)e^{-\rho t}}$ (as in Theorem~\ref{wbth}) or~$V_0 = e^{U(x)e^{-\rho t} + y}$ (as in again proof of Corollary~3.3 in~\cite{MR4260476}). 

\end{enumerate}
\end{remark}

In the following, for any~$s\in[0,T]$, we extend the definition of any process~$Z_t$ defined on~$[s,T]$ to~$[0,T]$ by setting~$Z_t = Z_s$ for~$t\in[0,s)$. In the proofs, many computations are close in spirit to those in Lemma~\ref{firstlem}, Theorem~\ref{seclem} and so are compressed.

\begin{lemma}\label{firstlembel}
Under Assumption~\ref{A4lya}, for any~$k_1 > 2(n+1)$,~$R>0$, there exist constants~$C>0$,~$n+1<\nu_1\leq k_1$ such that 
\begin{equation*}
\mathbb{E}\bigg[\sup_{u\in[s,t]}\abs{\partial^{(\kappa)}X_{s,u}^x - \partial^{(\kappa)}X_{s,s}^x }^{k_1} \bigg] < C\abs{t-s}^{\nu_1}
\end{equation*}
for all~$(s,t)\in\Delta_T$,~$x\in B_R$,~$\kappa \in \{ (\kappa_i)_{1\leq i\leq p_0}:\kappa_i\in\mathbb{R}^n, \abs*{\kappa_i} =1, 1\leq i\leq p, p_0\in \mathbb{N}_0\cap [0,p]\}$.
\end{lemma}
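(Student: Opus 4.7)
The plan is to express $\partial^{(\kappa)}X_{s,u}^x - \partial^{(\kappa)}X_{s,s}^x$ as the sum of a drift integral and a stochastic integral over $[s,u]$, then extract the power of $|t-s|$ by Burkholder--Davis--Gundy (BDG) and Jensen, once the two integrands have been shown to have $L^{k_1}(\mathbb{P})$-norms bounded uniformly in $(v,x)\in[s,T]\times B_R$. I would proceed by strong induction on $p_0\leq p$; the argument is uniform over $p_0$, with the case $p_0=0$ corresponding to the original SDE $X_{s,u}^x - x = \int_s^u b(X_{s,v}^x)\,dv + \int_s^u \sigma(X_{s,v}^x)\,dW_v$ and no mixed products.

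For general $p_0$, the process $\partial^{(\kappa)}X_{s,\cdot}^x$ satisfies the system obtained by formal differentiation of~\eqref{stxsys}:
\begin{equation*}
\partial^{(\kappa)}X_{s,u}^x - \partial^{(\kappa)}X_{s,s}^x = \int_s^u D^{(\kappa)}b(X_{s,v}^x)\,dv + \int_s^u D^{(\kappa)}\sigma(X_{s,v}^x)\,dW_v,
\end{equation*}
where, exactly as in the proof of Theorem~\ref{seclem}, $D^{(\kappa)}b$ is a sum of terms of the form $(\partial^{\alpha}b)(X_{s,v}^x)$ multiplied by a product of lower-order derivative processes $\partial^{(\kappa')}X_{s,v}^x$ whose orders sum to $p_0$, with $|\alpha|\in\{1,\ldots,p_0\}$, and analogously for $D^{(\kappa)}\sigma$. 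The derivatives $\partial^{\alpha}b,\partial^{\alpha}\sigma$ of order $|\alpha|\leq 2$ are bounded by $c(1+U)^{\gamma}$ via Assumption~\ref{A4lya}\ref{jgi2}, and those of higher order are dominated by powers of the Lyapunov functions $\hat{V}_k^{s,T}$ through Assumption~\ref{A2lya} (contained in~\ref{ltfirst}). Theorem~\ref{seclem} controls the moments of the derivative-process factors by polynomials in expectations of Lyapunov functions at the initial point, while the Lyapunov property~\eqref{origlya}, together with Theorem~2.4 in~\cite{MR4260476} and the locality-in-$s$ built into~\ref{ltfirst}, yields a $v$-uniform bound $\sup_{v\in[s,T]}\mathbb{E}(1+U(X_{s,v}^x))^{q}\leq C(R,T)$ for any $q>0$. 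H\"older's inequality then gives
\begin{equation*}
\sup_{v\in[s,T]}\sup_{x\in B_R}\Big(\mathbb{E}\abs{D^{(\kappa)}b(X_{s,v}^x)}^{k_1} + \mathbb{E}\|D^{(\kappa)}\sigma(X_{s,v}^x)\|^{k_1}\Big) \leq C(R,T).
\end{equation*}

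With these uniform bounds in hand, Jensen's inequality on the drift integral and BDG on the stochastic integral, each followed by Jensen to bring the $v$-integral outside the $k_1$-th power, give
\begin{align*}
\mathbb{E}\sup_{u\in[s,t]}\Big|\int_s^u D^{(\kappa)}b(X_{s,v}^x)\,dv\Big|^{k_1} &\leq |t-s|^{k_1 - 1}\!\int_s^t\! \mathbb{E}\abs{D^{(\kappa)}b(X_{s,v}^x)}^{k_1}\,dv \leq C|t-s|^{k_1},\\
\mathbb{E}\sup_{u\in[s,t]}\Big|\int_s^u D^{(\kappa)}\sigma(X_{s,v}^x)\,dW_v\Big|^{k_1} &\leq C_{k_1}|t-s|^{\frac{k_1}{2}-1}\!\int_s^t\! \mathbb{E}\|D^{(\kappa)}\sigma(X_{s,v}^x)\|^{k_1}\,dv \leq C|t-s|^{\frac{k_1}{2}}.
\end{align*}
Taking $\nu_1 := k_1/2$ and using the hypothesis $k_1 > 2(n+1)$ yields $\nu_1>n+1$ and closes the induction. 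The main obstacle is the book-keeping for $p_0\geq 2$: one has to verify that every mixed product of lower-order derivative processes appearing in $D^{(\kappa)}b,D^{(\kappa)}\sigma$ indeed has $v$-uniform $L^{k_1}$-moments on $B_R$, which is where Theorem~\ref{seclem}, the domination $V_0(0,\tilde x'(y))^{l^*}\leq C(1+e^{U(y)e^{-\rho T}})$ from~\ref{ltfirst}, and continuity of the resulting Lyapunov bounds in $y\in B_R$ are used in concert. Once this is set up, the time-regularity estimate is a routine BDG/Jensen calculation.
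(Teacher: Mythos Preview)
Your approach is correct and yields the same exponent $\nu_1=k_1/2$; it differs from the paper's proof mainly in organisation. The paper does not run a single BDG/Jensen argument over all $p_0$, but instead splits into three cases. For $p_0\geq 2$ it simply observes that $\partial^{(\kappa)}X_{s,s}^x=0$ and invokes \eqref{kappar2} of Theorem~\ref{seclem} directly, since that bound already carries the factor $(T-s)^{\nu}$ with $\nu\geq k_1/2$; no new estimate is needed. For $p_0=0$ and $p_0=1$ the paper applies the stochastic Gr\"onwall inequality (Corollary~2.5 in~\cite{MR4260476}) with $\beta_t$ built from the polynomial bound $|b|,\|\sigma\|\leq c(1+U)^{\gamma}$ of Assumption~\ref{A4lya}\ref{jgi2} (case $p_0=0$) and from the Lipschitz control through $G$ in Assumption~\ref{A1lya} (case $p_0=1$), obtaining the same $|t-s|^{k_1/2}$.

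Your unified BDG/Jensen route is more elementary and conceptually cleaner, at the cost of the bookkeeping you flag; the paper's route is shorter for $p_0\geq 2$ because it recycles the time factor already proved in Theorem~\ref{seclem}. One small point: for the first-order derivative factors appearing in your mixed products you should cite Lemma~\ref{firstlem} rather than Theorem~\ref{seclem} (which handles orders $2$ through $p$), and your ``strong induction'' is really only a decomposition of $D^{(\kappa)}b,D^{(\kappa)}\sigma$ into lower-order pieces whose moments are already controlled by Lemma~\ref{firstlem} and Theorem~\ref{seclem} --- no inductive hypothesis on the time-regularity statement itself is needed.
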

\begin{proof}

By~\eqref{kappar2} in Theorem~\ref{seclem} (with a time-shifted Wiener process and filtration) and using that~$\partial^{(\kappa)}X_{s,s}^x = 0$ (for~$\kappa$ in the following set), the existence of such constants have already been shown for~$\kappa \in\{ (\kappa_i)_{1\leq i\leq p_0}:\kappa_i\in\mathbb{R}^n, \abs*{\kappa_i} =1, 1\leq i\leq p, p_0\in \mathbb{N}_0\cap [2,p]\}$. Using Assumption~\ref{A4lya}\ref{jgi2}, 
Corollaries~\ref{huddecor1},~\ref{huddecor2} 
as well as Jensen's inequality, it holds that
\begin{align*}
\mathbb{E}\bigg[\sup_{u\in[s,t]}\abs{X_{s,u}^x - x}^{k_1}\bigg] &\leq Ce^{k_1 (t-s)} \bigg(  \mathbb{E}\bigg[ (t-s)^{k_1-1}\int_0^{t-s} e^{U(X_{s,s+u}^x)e^{-\rho u}-2k_1u} du \bigg] \bigg)^{\frac{1}{2}}\\
&\leq Ce^{k_1 (t-s)} \bigg( (t-s)^{k_1-1}\int_0^{t-s}  e^{U(x)}du \bigg)^{\frac{1}{2}}\\
&\leq C\abs{t-s}^{\frac{k_1}{2}} 
\end{align*}
for all~$(s,t)\in\Delta_T$,~$x\in B_R$. 
Using in addition Assumption~\ref{A1lya}, it holds that
\begin{align*}
&\mathbb{E}\bigg[\sup_{u\in[s,t]}\abs{\partial^{(\kappa_i)}X_{s,u}^x - \kappa_i}^{k_1}\bigg]\\
&\quad \leq C\Big(\mathbb{E}\Big[ \Big(e^{m(\sum_{i\in I_0}\int_0^{t-s} \log V_i(u,\bar{x}_i(x)) + \sum_{i'\in I_0'} \log V_{i'}(t-s,\bar{x}_{i'}(x)) } \Big)^{2k_1}\Big]\Big)^{\frac{1}{2}}\\
&\qquad\cdot \bigg(  \mathbb{E}\bigg[(t-s)^{k_1-1}\int_0^{t-s} e^{U(X_{s,s+u}^x)e^{-\rho u}} du \bigg]  \bigg)^{\frac{1}{2}}\\
&\quad\leq C \bigg(1+\sum_{i\in I_0\cup I_0'}V_i(0,\bar{x}_i(x))\bigg)^{\frac{1}{2}} \bigg( (t-s)^{k_1-1}\int_0^{t-s} e^{U(x)} du \bigg)^{\frac{1}{2}}\\
&\quad\leq C\abs{t-s}^{\frac{k_1}{2}} 
\end{align*}
for all~$(s,t)\in\Delta_T$,~$x\in B_R$,~$\kappa_i\in\mathbb{R}^n$ with~$\abs{\kappa_i} = 1$.
\end{proof}

The following lemma verifies the corresponding assumptions in Proposition~\ref{iagthm} 
under Assumption~\ref{A4lya}. Moreover, it is shown that the estimates therein hold uniformly with respect to the discretization~$\theta\in\Theta$. 
\begin{lemma}\label{weakconvlya0}
Let Assumption~\ref{A4lya} hold. 
There exists a function~$\Omega\times\Delta_T\times\mathbb{R}^n\ni(\omega,(s,t),x)\mapsto\bar{X}_{s,t}^x(\omega)\in\mathbb{R}^n$ 
such that 
\begin{itemize}
\item it holds~$\mathbb{P}$-a.s. that for any~$(s,t)\in\Delta_T$,~$\mathbb{R}^n\ni x\mapsto \bar{X}_{s,t}^x\in\mathbb{R}^n$ is continuously differentiable in~$x$ up to order~$p-1$ and
the derivative
~$\Delta_T\times \mathbb{R}^n \ni ((s,t),x) \mapsto \partial^{\alpha}\bar{X}_{s,t}^x \in \mathbb{R}^n$ is continuous for all multiindices~$\alpha$ with~$0\leq\abs{\alpha}\leq p-1$,
\item for any~$s\in[0,T]$,~$x\in\mathbb{R}^n$, 
the function
~$\partial^{\alpha}\bar{X}_{s,\cdot}^x$ is indistinguishable from
~$\partial^{(\kappa_{\alpha})}X_{s,\cdot}^x$ for all multiindices~$\alpha$ with~$0\leq\abs{\alpha}\leq p-1$.
\end{itemize}
Moreover, for any~$p^{\dagger}>0$, 
it holds that
\begin{equation*}
\sup_{0\leq\abs{\alpha}\leq p-1}\sup_{\theta\in\Theta}\sup_{0\leq r\leq s \leq t \leq T} \mathbb{E}\bigg[\Big\vert b\Big(\bar{X}_{s,t}^{Y_s^{\theta}}\Big)\Big\vert^{p^{\dagger}} + \Big\|\sigma\Big(\bar{X}_{s,t}^{Y_s^{\theta}}\Big)\Big\|^{p^{\dagger}}
+ \bigg\vert\partial^{\alpha} \bar{X}_{t,T}^{\bar{X}_{r,s}^{Y_r^{\theta}}}\bigg\vert^{p^{\dagger}}\bigg] 
 < \infty.
\end{equation*}

\end{lemma}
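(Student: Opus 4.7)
The plan is first to establish existence of the jointly regular flow $\bar{X}$, and then to control moments by combining the exponential integrability of $Y^\theta$ from~\cite{MR3766391} with the Lyapunov estimates of Section~\ref{momentestimates}. For the flow, I would begin by invoking Theorem~V.38 and V.39 in~\cite{MR2273672} at each fixed $s\in[0,T]$. Since Assumption~\ref{A4lya}\ref{ltfirst} supplies Assumption~\ref{A2lya} with $p\geq 3$, the coefficients $b,\sigma$ admit locally Lipschitz derivatives up to order $p-1$; Protter's results then yield for $\mathbb{P}$-a.a.\ $\omega$ a version $\hat{X}_{s,\cdot}^{\cdot}$ of the flow that is continuously differentiable in $x$ up to order $p-1$, the derivatives of which solve the formally differentiated systems. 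To produce a single modification $\bar{X}_{s,t}^x$ which together with all spatial derivatives up to order $p-1$ is jointly continuous in $((s,t),x)\in\Delta_T\times\mathbb{R}^n$, I would apply Kolmogorov's continuity criterion on compact subsets, using three moment inputs: the $x$-increment bound~\eqref{kappar1} of Theorem~\ref{seclem}, the $t$-increment bound of Lemma~\ref{firstlembel}, and an analogous $s$-increment bound derived from the flow identity $X_{s,t}^x = X_{s',t}^{X_{s,s'}^x}$ combined with a H\"older estimate on $|X_{s,s'}^x - x|$ from Corollary~3.3 in~\cite{MR4260476} and the $x$-H\"older estimate from Theorem~\ref{seclem}. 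Indistinguishability of $\partial^\alpha \bar{X}_{s,\cdot}^x$ with $\partial^{(\kappa_\alpha)}X_{s,\cdot}^x$ then follows from the identification in Lemma~\ref{classprob} together with path continuity of the chosen modification.

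For the moment bounds, I would apply the exponential integrability result of~\cite{MR3766391} to obtain, under Assumption~\ref{A4lya}\ref{jgi2},\ref{partition}, that $\sup_{\theta\in\Theta}\sup_{s\in[0,T]}\mathbb{E}[e^{U(Y_s^\theta)e^{-\rho s}}]$ is finite. Combined with the bound $V_0(0,\tilde{x}'(y))^{l^*} \leq C(1+e^{U(y)e^{-\rho T}})$ from Assumption~\ref{A4lya}\ref{ltfirst}, this gives a uniform-in-$(\theta,s)$ bound on $\mathbb{E}[V_0(0,\tilde{x}'(Y_s^\theta))^{l^*}]$ for every Lyapunov function $V_0$ in play. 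For the composed derivative term, I would condition on $\mathcal{F}_t$ to freeze the initial data $z:=\bar{X}_{r,s}^{Y_r^\theta}$, which is $\mathcal{F}_s\subset\mathcal{F}_t$-measurable, and exploit independence of the Wiener increments on $[t,T]$ from $\mathcal{F}_t$: Theorem~\ref{seclem} applied to the time-shifted Wiener process on $[t,T]$ gives the conditional bound $\mathbb{E}[|\partial^\alpha \bar{X}_{t,T}^z|^{p^\dagger}\mid\mathcal{F}_t]\leq q(z,z)$, a polynomial in Lyapunov functions evaluated at $z$. Applying the composition bound $V'(0,\tilde{x}'(X_{s,\tau}^y))^{l^*}\leq C(1+V'(\tau-s,\tilde{x}'(y)))$ from Assumption~\ref{A4lya}\ref{ltfirst}, then the Lyapunov property~\eqref{origlya} via Theorem~2.4 in~\cite{MR4260476}, and finally the exponential integrability above reduces the outer expectation to a uniform bound. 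The drift and diffusion terms $|b(\bar{X}_{s,t}^{Y_s^\theta})|^{p^\dagger}+\|\sigma(\bar{X}_{s,t}^{Y_s^\theta})\|^{p^\dagger}$ are treated similarly using the polynomial-in-$U$ growth from Assumption~\ref{A4lya}\ref{jgi2}.

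The main obstacle will be the joint regularity in $((s,t),x)$: Kolmogorov's criterion must be applied on a parameter space of dimension $n+2$, so the moment inputs must hold with exponents strictly larger than $n+2$, which is where the threshold $k_1 > 2(n+1)$ in Lemma~\ref{firstlembel} enters. The $s$-increment is the most delicate point because shifting $s$ shifts the filtration, which forces either the flow-identity argument sketched above or direct invocation of the strong completeness result of~\cite{https://doi.org/10.48550/arxiv.1309.5595} (the latter being explicitly flagged in the excerpt as the intended approach). A secondary but purely mechanical challenge is propagating uniformity in $\theta\in\Theta$ through the Lyapunov chain; this reduces to the assumed uniform bound $\sup_{\theta}\mathbb{E}[e^{U(Y_0^\theta)}]<\infty$ in Assumption~\ref{A4lya}\ref{partition} once the exponential integrability of the scheme is in hand.
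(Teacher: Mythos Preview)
Your approach coincides with the paper's, but two steps are under-specified in ways that matter.

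First, the $s$-increment for $\partial^{\alpha}\hat{X}_{s,t}^x$ with $|\alpha|\geq 1$: the flow identity $X_{s,t}^x=X_{s',t}^{X_{s,s'}^x}$ you cite is only the base-process statement. The derivative processes do not satisfy an analogous scalar identity; one must pass to the \emph{joint} system $(\partial^{\beta}X_{s,t}^x)_{|\beta|\leq p-1}$ and use the Markov property there (the paper does this at~\eqref{texpwp}), so that shifting $s$ to $s\vee s'$ replaces the initial data by the full vector $(\partial^{\beta}\hat{X}_{s\wedge s',s\vee s'}^x)_{\beta}$. Lemma~\ref{firstlembel} then bounds this vector, and a slight extension of Lemma~\ref{firstlem}/Theorem~\ref{seclem} to general (non-canonical) initial data for the derivative systems handles the remaining interval $[s\vee s',T]$. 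Your sketch collapses this to the base-process flow identity plus an $x$-H\"older estimate, which is not quite enough as stated.

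Second, Kolmogorov (equivalently Corollary~3.10 in~\cite{https://doi.org/10.48550/arxiv.1309.5595}, which the paper uses) produces continuous modifications $\overline{\partial^{\alpha}X}_{s,t}^x$ of each derivative process \emph{separately}; it does not by itself yield that the zeroth-order modification is $C^{p-1}$ in $x$ with derivatives equal to the higher-order modifications. The paper closes this by partially integrating each $\overline{\partial^{\alpha}X}$ in $x$ and checking that the resulting functions coincide across $\alpha$ via joint continuity and the fixed-$(s,t)$ identification from Lemma~\ref{classprob}. You skip this reconstruction.

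The moment-bound half is correct. One refinement the paper makes explicit: since Theorem~\ref{seclem} outputs a polynomial $q_0$ in Lyapunov values, the paper rescales each Lyapunov function by the power $l=2l^*/\mathrm{degree}(q_0)$ (permissible because~\eqref{origlya} is stable under powers $\leq 1$) so that $q_0$ evaluated at the rescaled functions is dominated by $V_0^{l^*}$ and hence by $e^{U(\cdot)e^{-\rho T}}$ via Assumption~\ref{A4lya}\ref{ltfirst}.
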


\begin{proof}
By Lemma~\ref{classprob} (with time-shifted Wiener process and filtration), 
derivatives in probability~$\partial^{(\kappa_{\alpha})}X_{s,\cdot}^x$ are indistinguishable from classical derivatives~$\partial^{\alpha}\hat{X}_{s,\cdot}^x$. 
In order to use 
Corollary~3.10 in~\cite{https://doi.org/10.48550/arxiv.1309.5595}, we show that for each~$R>0$,~$k_1>2(n+1)$, it holds that
\begin{equation}\label{desprop}
\sup_{0\leq\abs{\alpha}\leq p-1}\sup_{x,x'\in B_R}\sup_{s,s' \in[0,T]} \frac{ \mathbb{E}[\sup_{t\in[0,T]} \abs{ \partial^{\alpha}\hat{X}_{s',t}^{x'} - \partial^{\alpha}\hat{X}_{s,t}^x}^{k_1}] }{(\abs{x'-x}^2 + \abs{s'-s}^2)^{\frac{\nu_1}{2}} } < \infty,
\end{equation}
where~$\nu_1$ is the same constant from Lemma~\ref{firstlembel}.
The marginal differences in~$x$ and~$s$ in the numerator are considered separately. By Lemma~\ref{firstlem} or Theorem~\ref{seclem}, the difference term in~$x$ in the numerator of~\eqref{desprop} has the bound
\begin{align*}
\mathbb{E}\bigg[\sup_{t\in[0,T]}\abs{\partial^{\alpha}\hat{X}_{s',t}^{x'} - \partial^{\alpha}\hat{X}_{s',t}^x}^{k_1}\bigg]\leq C\abs{x'-x}^{k_1}
\end{align*}
for all~$s\in[0,T]$,~$x,x'\in B_R$, which is the desired H\"older bound for~\eqref{desprop}. For the difference term in~$s$ in the numerator of~\eqref{desprop}, it holds that
\begin{align}
\mathbb{E}\bigg[\sup_{t\in[0,T]} \abs{ \partial^{\alpha} \hat{X}_{s',t}^x - \partial^{\alpha} \hat{X}_{s,t}^x }^{k_1} \bigg] & \leq \mathbb{E}\bigg[\sup_{t\in[s\wedge s',s\vee s']} \abs{ \partial^{\alpha}\hat{X}_{s\wedge s',t}^x - \partial^{\alpha} \hat{X}_{s\vee s',s\vee s'}^x }^{k_1} \bigg]\nonumber\\
&\quad + \mathbb{E}\bigg[\sup_{t\in[s\vee s',T]} \abs{ \partial^{\alpha} \hat{X}_{s',t}^x - \partial^{\alpha} \hat{X}_{s,t}^x }^{k_1} \bigg],\label{thatlastholder}
\end{align}
where the first term on the right-hand side has the desired H\"older bound for~\eqref{desprop} by Lemma~\ref{firstlembel}. For the second term, by Assumption~\ref{A4lya}\ref{ltfirst} and Lemma~\ref{eulercan}, combined with Theorem~5.3 in~\cite{MR1731794}, the joint system solved by~$(\partial^{\alpha} X_{s,t}^x)_{0\leq\abs{\alpha}\leq p-1}$ is regular~\cite[Definition~2.1]{MR1731794} and the same holds for the sum~$(\partial^{\alpha} X_{s',t}^x - \partial^{\alpha} X_{s,t}^x)_{0\leq\abs{\alpha}\leq p-1}$ by an easy argument; therefore the strong Markov property (Theorem~2.13 in~\cite{MR1731794} with Proposition~4.1.5 in~\cite{MR838085}) 
yields for any~$R'>0$ that
\begin{align}
&\mathbb{E}\bigg[\sup_{t\in[s\vee s',T]} \abs{ \partial^{\alpha} \hat{X}_{s',t}^x - \partial^{\alpha} \hat{X}_{s,t}^x }^{k_1}\wedge R'  \bigg]\nonumber\\
&\quad= \mathbb{E}\bigg[\mathbb{E}\bigg[\sup_{t\in[s\vee s',T]} \abs{ \partial^{\alpha} \hat{X}_{s',t}^x - \partial^{\alpha} \hat{X}_{s,t}^x }^{k_1} \wedge R' \bigg\vert \mathcal{F}_{s\vee s'}\bigg]\bigg]\nonumber\\
&\quad= \int \int \sup_{t\in[s\vee s',T]} \Big\vert \partial^{\alpha} \hat{X}_{s\vee s',t}^{(\partial^{\beta}\hat{X}_{s\wedge s',s\vee s'}^x(\omega))_{\beta}}(\omega')\nonumber\\
&\qquad - \partial^{\alpha}\hat{X}_{s\vee s',t}^x(\omega') \Big\vert^{k_1} \wedge R'\ d\mathbb{P}(\omega') d\mathbb{P}(\omega),\label{texpwp}
\end{align}
where~$\partial^{\alpha} \hat{X}_{s\vee s',t}^{(\partial^{\beta}\hat{X}_{s\wedge s',s\vee s'}^x(\omega))_{\beta}}(\omega')$ denotes the solution to the same (joint) system as~$\partial^{\alpha} \hat{X}_{s\vee s',t}^x(\omega')$ but with initial conditions~$\partial^{\beta}\hat{X}_{s\wedge s',s\vee s'}^x(\omega)$ for~$0\leq\abs{\beta}\leq p-1$ for each respective partial derivative in place of the initial conditions~$x$,~$e_i$ or~$0$. Then the proofs of Lemma~\ref{firstlem} and Theorem~\ref{seclem} may be slightly modified in order to obtain analogous statements for the expectation in~$\omega'$ in~\eqref{texpwp}; the modification is namely that the initial condition (fixed with respect to~$\omega'$) as mentioned can be added with no complications\footnote{Actually the~$T-s$ term is lost on the right-hand side of~\eqref{kappar1} but that's not important here.} when Corollary~\ref{huddecor1}
is applied. Given this, it holds that
\begin{align*}
&\mathbb{E}\bigg[\sup_{t\in[s\vee s',T]} \abs{ \partial^{\alpha} \hat{X}_{s',t}^x - \partial^{\alpha} \hat{X}_{s,t}^x }^{k_1}\wedge R'  \bigg] \\
&\quad\leq C \sum_{\beta = 0}^{\abs{\alpha}-1}\mathbb{E} \Big[\abs{\partial^{\beta}\hat{X}_{s\wedge s',s\vee s'}^x - \partial^{\beta}\hat{X}_{s\vee s',s\vee s'}^x }^{k_1}\Big],\\
&\quad= C \sum_{\beta = 0}^{\abs{\alpha}-1}\mathbb{E} \Big[\abs{\partial^{\beta}\hat{X}_{s\wedge s',s\vee s'}^x - \partial^{\beta}\hat{X}_{s\wedge s',s\wedge s'}^x }^{k_1}\Big],
\end{align*}
for all~$x\in B_R$,~$s,s'\in[0,T]$,~$0\leq \abs{\alpha}\leq p-1$, which, by Lemma~\ref{firstlembel} and dominated convergence in~$R'$, implies that the last term on the right-hand side of~\eqref{thatlastholder} has the desired H\"older bound for~\eqref{desprop}. Gathering the above and using the triangle inequality,~\eqref{desprop} holds. 
Consequently, using on the way Lemma~\ref{firstlem} and Theorem~\ref{seclem}, Corollary~3.10 in~\cite{https://doi.org/10.48550/arxiv.1309.5595} may be applied with~$\beta = \frac{\nu_1}{k_1}$,~$D = [0,T]\times\mathbb{R}^n$,~$E=F=C([0,T],\mathbb{R}^n)$,~$X = (\Omega\times [0,T]\times\mathbb{R}^n \ni (\omega,s,x)\mapsto \partial^{\alpha}\hat{X}_{s,\cdot}^x (\omega)\in C([0,T],\mathbb{R}^n))$ to obtain for~$0\leq\abs{\alpha}\leq p-1$ existence of an~$\mathcal{F}\otimes\mathcal{B}([0,T])\otimes\mathcal{B}(\mathbb{R}^n)$-measurable~$\Omega\times[0,T]\times\mathbb{R}^n\ni(\omega,s,x)\mapsto \overline{\partial^{\alpha} X}_{s,\cdot}^{\,x}(\omega) \in C([0,T],\mathbb{R}^n)$ such that for all~$\omega\in\Omega$, the function~$[0,T]\times\mathbb{R}^n\ni (s,x)\mapsto \overline{\partial^{\alpha} X}_{s,\cdot}^{\,x} \in C([0,T],\mathbb{R}^n)$ is continuous and for any~$(s,x)\in[0,T]\times\mathbb{R}^n$,~$\overline{\partial^{\alpha} X}_{s,\cdot}^{\,x}$ is indistinguishable from~$\partial^{\alpha}\hat{X}_{s,\cdot}^x$.

Since partial integrals of (jointly) continuous functions are still continuous, we may partially integrate~$\abs{\alpha}$ times each~$\Delta_T\times\mathbb{R}^n \ni ((s,t),x)\mapsto\overline{\partial^{\alpha} X}_{s,t}^{\,x}\in\mathbb{R}^n$ from~$0$ to~$x_i$ in order to obtain for each~$\alpha,\omega$ a continuous function~$\Delta_T\times\mathbb{R}^n\ni((s,t),x)\mapsto \bar{X}_{s,t}^{x,\alpha}\in\mathbb{R}^n$ (where at each integration, continuous functions of the form~$((s,t),x)\mapsto \overline{\partial^{\beta}X}_{s,t}^{\,(x_1,\dots,0,\dots,x_n)}$ and subsequently their integrals are added in line with the fundamental theorem of calculus, which have zero partial derivative
). For any~$(s,t)\in\Delta_T$ and~$\alpha$ with~$0\leq\abs{\alpha}\leq p-1$, by definition of~$\overline{\partial^{\alpha}X}_{s,t}^{\,x}$ and its continuity in~$x$, it holds~$\mathbb{P}$-a.s. that~$\overline{\partial^{\alpha}X}_{s,t}^{\,x} = \partial^{\alpha}\hat{X}_{s,t}^x$ for all~$x\in\mathbb{R}^n$, so that their partial integrals in~$x$ are also~$\mathbb{P}$-a.s. equal for all~$x\in\mathbb{R}^n$ and in particular it holds~$\mathbb{P}$-a.s. that~$\bar{X}_{s,t}^{x,\alpha} = \hat{X}_{s,t}^x$, 
for all~$x\in\mathbb{R}^n$. 
Therefore, by continuity in~$(s,t),x$, these functions coincide~$\mathbb{P}$-a.s. across~$\alpha$, that is, it holds~$\mathbb{P}$-a.s. that~$\bar{X}_{s,t}^{x,\alpha} = \bar{X}_{s,t}^{x,\alpha'}$ and thus~$\partial^{\beta}\bar{X}_{s,t}^{x,\alpha} = \partial^{\beta}\bar{X}_{s,t}^{x,\alpha'}$ for all~$(s,t)\in\Delta_T$,~$x\in\mathbb{R}^n$ and multiindices~$\alpha,\alpha',\beta$ with
~$\abs{\alpha},\abs{\alpha'},\abs{\beta} \in [0, p-1]$. 
Let this~$\mathbb{P}$-a.s. defined function be denoted by~$\bar{X}_{s,t}^x$, then the assertions about~$\bar{X}_{s,t}^x$ in the statement of the lemma have been shown.

For the last assertion, the Markov property (Theorem~2.13 in~\cite{MR1731794}) will be applied repeatedly without further mention. Since Assumption~\ref{A4lya}\ref{jgi2} implies in particular for any~$p^{\dagger}>0$ that
\begin{equation*}
\abs*{b(x)}^{p^{\dagger}} + \|\sigma (x)\|^{p^{\dagger}} \leq Ce^{U(x)e^{-\rho t} }
\end{equation*}
for all~$x\in\mathbb{R}^n$,~$t\in[0,T]$, 
by Corollary~\ref{huddecor2} 
and Assumption~\ref{A4lya}\ref{jgi2}, it holds that
\begin{align*}
&\sup_{\theta\in\Theta}\sup_{0\leq s\leq t\leq T}\mathbb{E}[\abs{b(\bar{X}_{s,t}^{Y_s^{\theta}})}^{p^{\dagger}} + \|\sigma(\bar{X}_{s,t}^{Y_s^{\theta}})\|^{p^{\dagger}}  ]\\
&\quad\leq C\sup_{\theta\in\Theta}\sup_{0\leq s\leq t\leq T}\mathbb{E}\Big[e^{U(\bar{X}_{s,t}^{Y_s^{\theta}})e^{-\rho (t-s)} }\Big]\\
&\quad\leq C\sup_{\theta\in\Theta}\sup_{0\leq s\leq t\leq T}\mathbb{E}\Big[e^{U(\bar{X}_{s,t}^{Y_s^{\theta}})e^{-\rho (t-s)} + \int_s^t \bar{U}(\bar{X}_{s,u}^{Y_s})e^{-\rho (u-s)} du } \Big]\\
&\quad\leq C\sup_{\theta\in\Theta}\sup_{0\leq s\leq T}\mathbb{E}\Big[e^{U(Y_s^{\theta})}\Big],
\end{align*}
which is finite by Theorem~2.9 in~\cite{MR3766391}. For any~$p^{\dagger}>0$, by Assumption~\ref{A4lya}\ref{jgi2}, 
Corollary~\ref{huddecor2} 
and that~$e^{-\rho(s-r)},e^{-\rho r} < 1$, it holds that
\begin{align*}
&\sup_{0\leq r\leq s\leq t\leq T} \mathbb{E}\bigg[\Big\vert \bar{X}_{t,T}^{\bar{X}_{r,s}^{Y_r^{\theta}}}\Big\vert^{p^{\dagger}}\bigg]\\
&\quad \leq C\sup_{0\leq r\leq s\leq t\leq T} \mathbb{E} \bigg[ \exp(U(\bar{X}_{t,T}^{\bar{X}_{r,s}^{Y_r^{\theta}}}) e^{-\rho (T-t)}e^{-\rho(s-r)}e^{-\rho r}\\
&\qquad + \int_t^T \bar{U}(\bar{X}_{t,u}^{\bar{X}_{r,s}^{Y_r^{\theta}}})e^{-\rho (u-t)}e^{-\rho(s-r)}e^{-\rho r} du ) \bigg] \\
&\quad\leq C \sup_{0\leq r\leq s \leq T} \mathbb{E} \bigg[e^{U(\bar{X}_{r,s}^{Y_r^{\theta}})e^{-\rho(s-r)}e^{-\rho r}} \bigg]\\
&\quad\leq C \sup_{0\leq r\leq s \leq T} \mathbb{E} \bigg[e^{U(\bar{X}_{r,s}^{Y_r^{\theta}})e^{-\rho(s-r)}e^{-\rho r} + \int_r^s \bar{U}(\bar{X}_{r,u}^{Y_r^{\theta}})e^{-\rho (u-r)}e^{-\rho r} du } \bigg]\\
&\quad\leq C\sup_{0\leq r \leq T} \mathbb{E} \Big[e^{U(Y_r^{\theta})e^{-\rho r}}\Big],
\end{align*}
for all~$\theta\in\Theta$, which is finite uniformly in~$\theta$ by Theorem~2.9 in~\cite{MR3766391}.

For the higher derivatives, first note that for~$V_0$ satisfying~\eqref{lyapr2} and~$0<l<1$,~\eqref{lyapr2} is also satisfied with~$(V_0+1)^l$ in place of~$V_0$. Moreover, the respective Lyapunov functions they generate satisfy Assumptions~\ref{A1lya} and~\ref{A2lya}. Therefore, for any~$\tilde{I}\in\mathbb{N}\cap [1,p-1]$,~$\kappa \in \{(\kappa_i)_{i = 1,\dots,\tilde{I}}: \kappa_i\in\mathbb{R}^n,\abs{\kappa_i}=1 \}$, we may choose~$l = \frac{(l^*)^2}{\textrm{degree}(q_0)}$, with~$q_0$ from Theorem~\ref{seclem}, so that for~$\tilde{p}^{\dagger}>0$, by Lemma~\ref{firstlem} or Theorem~\ref{seclem}, Young's inequality, Assumptions~\ref{A4lya}\ref{jgi2}\ref{ltfirst} and 
Corollary~\ref{huddecor0}, 
it holds that
\begin{align*}
&\sup_{0\leq r\leq s\leq t\leq T} \mathbb{E}\bigg[\Big\vert \partial^{(\kappa)}X_{t,T}^{X_{r,s}^{Y_r^{\theta}}}\Big\vert^{\tilde{p}^{\dagger}}\bigg]\\
&\quad\leq C \sup_{0\leq r\leq s \leq T} \mathbb{E} \bigg[1+ \sum_{i\in I_0\cup I_0'}V_i(0,\bar{x}_i( X_{r,s}^{Y_r^{\theta}} ))^{(l^*)^2} + \sum_{i=1}^{i^*}\hat{V}_{l_i}^{0,T}(0,\hat{x}_{l_i}(X_{r,s}^{Y_r^{\theta}}) )^{(l^*)^2}\bigg]\\
&\quad\leq C \sup_{0\leq r\leq s \leq T} \mathbb{E} \bigg[1+ \sum_{i\in I_0\cup I_0'}V_i(s-r,\bar{x}_i( Y_r^{\theta} ))^{l^*} + \sum_{i=1}^{i^*}\hat{V}_{l_i}^{0,T}(s-r,\hat{x}_{l_i}(Y_r^{\theta}) )^{l^*}\bigg]\\
&\quad\leq C \sup_{0\leq r \leq T} \mathbb{E} \bigg[1+ \sum_{i\in I_0\cup I_0'}V_i(0,\bar{x}_i( Y_r^{\theta} ))^{l^*} + \sum_{i=1}^{i^*}\hat{V}_{l_i}^{0,T}(0,\hat{x}_{l_i}(Y_r^{\theta}) )^{l^*}\bigg]\\
&\quad\leq C \sup_{0\leq r \leq T} \mathbb{E} \Big[1+ e^{U(Y_r^{\theta})e^{-\rho T}}\Big],
\end{align*}
where~$C$ is in particular independent of~$\kappa \in \{(\kappa_i)_{i = 1,\dots,\tilde{I}}: \kappa_i\in\mathbb{R}^n,\abs{\kappa_i}=1 \}$ and~$\theta\in\Theta$, so that the right-hand side is finite uniformly in~$\theta$ by Theorem~2.9 in~\cite{MR3766391} and also uniformly in~$\tilde{I}$.
\end{proof}

The main theorem of this section about weak convergence of order~$1$ for the stopped increment-tamed Euler-Maruyama scheme is as follows.
\begin{theorem}\label{weakconvlya}
Let Assumption~\ref{A4lya} hold. 
For~$f\in C^3(\mathbb{R}^n,\mathbb{R})$, if there exist constants~$q^{\dagger},C_f>0$ such that
\begin{equation}\label{theoneonf}
\abs{\partial^{\alpha}f(x)} 
\leq C_f\Big(1+\abs{x}^{q^{\dagger}}\Big)
\end{equation}
for all~$x\in\mathbb{R}^n$ and multiindices~$\alpha$ with~$0\leq \abs{\alpha} \leq 3$, then there exists a constant~$C>0$ such that
\begin{equation*}
\Big\vert\mathbb{E}\Big[f\Big(X_{0,T}^{Y_0^{\theta}}\Big)\Big] - \mathbb{E}\Big[f\Big(Y_T^{\theta}\Big)\Big]\Big\vert \leq C\sup_{k\in\mathbb{N}_0\cap[0,n^*)}(t_{k+1}-t_k)
\end{equation*}
for all~$\theta\in\Theta$, where~$\theta = (t_0,\dots, t_{n^*})$.
\end{theorem}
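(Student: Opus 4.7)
The plan is to apply the It\^{o}--Alekseev--Gr\"{o}bner formula (Theorem~3.1 in~\cite{huddeiag}) to the difference $\mathbb{E}[f(X_{0,T}^{Y_0^\theta}) - f(Y_T^\theta)]$. First I would verify the hypotheses of that theorem: continuous differentiability of $x\mapsto X_{s,t}^x$ up to order $2$, together with the requisite uniform-in-$\theta$ finiteness of moments of $b$, $\sigma$, $f$ and of the derivatives $\partial^\alpha X_{t,T}^{\,\cdot}$ composed with the numerical trajectory. All of these are precisely what Lemma~\ref{weakconvlya0} provides, modulo the polynomial growth of $\partial^\alpha f$ from~\eqref{theoneonf}, which combined with Assumption~\ref{A4lya}\ref{jgi2} and the exponential integrability of~$Y^\theta$ (Theorem~2.9 in~\cite{MR3766391}) supplies the moments of $\partial^\alpha f(X_{u,T}^{Y_u^\theta})$. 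The formula then represents the weak error as an exact sum over subintervals
\begin{equation*}
\mathbb{E}[f(X_{0,T}^{Y_0^\theta})] - \mathbb{E}[f(Y_T^\theta)] = \sum_{k=0}^{n^*-1}\mathbb{E}\int_{t_k}^{t_{k+1}} \mathcal{R}_k(u)\,du,
\end{equation*}
where $\mathcal{R}_k(u)$ pairs $\partial^\alpha f(X_{u,T}^{Y_u^\theta})$ for $|\alpha|\leq 2$ against the discrepancy between the infinitesimal generator of~\eqref{sde0} evaluated along $Y^\theta$ and the effective local drift/diffusion of the scheme on $[t_k,t_{k+1}]$.

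Next, I would decompose $\mathcal{R}_k(u)$ by Taylor-expanding the tamed increment. Writing $Z_k := b(Y_{t_k}^\theta)(u-t_k) + \sigma(Y_{t_k}^\theta)(W_u-W_{t_k})$ and using $q'\geq 3$, the identity
\begin{equation*}
\frac{Z_k}{1+|Z_k|^{q'}} = Z_k - \frac{Z_k|Z_k|^{q'}}{1+|Z_k|^{q'}}
\end{equation*}
exhibits the leading part of $Y_u^\theta - Y_{t_k}^\theta$ (on the event the stopping indicator is $1$) as the exact Euler increment, with remainder bounded by $|Z_k|^{q'+1}$. By Burkholder--Davis--Gundy and the exponential integrability of $U(Y_{t_k}^\theta)$, the expectation of this remainder, multiplied by the polynomially bounded derivatives of $f(X_{u,T}^{\cdot})$, contributes $O((t_{k+1}-t_k)^{(q'+1)/2})$; since $q'\geq 3$, this is $O((t_{k+1}-t_k)^2)$, which is exactly what is needed. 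The leading Euler increment part, paired against the generator of the SDE, produces after a short It\^{o} computation a local error of order $(t_{k+1}-t_k)^2$ per interval, as in the standard weak analysis~\cite{MR1214374}, now justified by the moment bounds in Lemma~\ref{weakconvlya0} instead of global Lipschitz bounds. On the complement of the stopping indicator event, Markov's inequality combined with $\sup_\theta \mathbb{E}[e^{U(Y_{t_k}^\theta)}] < \infty$ and $U(x)\gtrsim |x|^{\gamma'}$ yields $\mathbb{P}(|Y_{t_k}^\theta|\geq \exp(|\log\delta|^{1/2}))$ super-polynomially small in $\delta$, absorbing any polynomial growth of the integrand on that event.

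Summing the per-interval contributions $O((t_{k+1}-t_k)^2)$ over $k$ gives the desired bound by a factor $\sup_k(t_{k+1}-t_k)\cdot T$. The main obstacle I anticipate is the combined bookkeeping: one must carefully organize the It\^{o}--Alekseev--Gr\"{o}bner expansion jointly with the Taylor expansion of the tamed increment and of the stopping indicator, so that every remainder pairs cleanly either with a second moment of a Wiener increment or with a power of $|Z_k|$ of order at least $q'+1\geq 4$, and so that only finitely many, uniformly controlled, moments of $\partial^\alpha X_{u,T}^{Y_u^\theta}$ (for $|\alpha|\leq 2$) are ever invoked. The choice $q'\geq 3$ is sharp here: $q'=1$ would yield only $(t_{k+1}-t_k)^{1}$ from the taming remainder after Gaussian moments, losing a full factor of $\delta$, which is why the theorem is stated with exponent $3$ in~\eqref{btapprox} and explains the remark following the theorem that smaller exponents prevent order-one weak convergence.
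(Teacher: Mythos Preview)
Your plan is the paper's: It\^o--Alekseev--Gr\"obner via Lemma~\ref{weakconvlya0}, taming remainder controlled through $q'\ge3$, stopping indicator handled by exponential integrability, and $O((t_{k+1}-t_k)^2)$ per subinterval summed to $O(\delta)$. One adjustment and one genuine gap.

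The adjustment: the IAG integrand at time $t\in[t_k,t_{k+1})$ compares the \emph{instantaneous} drift and diffusion of $Y^\theta$ with $b(Y_t^\theta),\,\sigma\sigma^\top(Y_t^\theta)$, not the path increment. Accordingly the paper first rewrites $Y^\theta$ as an It\^o process by applying It\^o's rule to $\hat f(z)=z/(1+|z|^{q'})$ (equations~\eqref{Zbssde1}--\eqref{sigstardef}), obtaining correction coefficients $b^*,\sigma^*$ with $|b^*|\lesssim |Z_t^{\theta,k}|^{\nu_2}$, $\nu_2\ge2$, because $D^2\hat f(z)=O(|z|^{q'-1})$ near~$0$; this is precisely where $q'\ge3$ enters. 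Your increment-level bound $|Z|^{q'+1}$ is one differentiation removed from this, though it yields the same threshold.

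The gap is in ``a short It\^o computation as in the standard weak analysis''. After expanding $b(Y_t^\theta)-b(Y_{t_k}^\theta)$ by It\^o's rule~\eqref{dertria1}, the stochastic-integral part is only $O((t-t_k)^{1/2})$ in $L^2$, and it does \emph{not} vanish in expectation when multiplied by the test factor $\big(\partial^\alpha \bar X_{t,T}^{Y_t^\theta}\big)\cdot\partial^\beta f(\bar X_{t,T}^{Y_t^\theta})$, since that factor depends on $Y_t^\theta$ and on $(W_s)_{s\ge t}$. The paper's remedy is a telescoping~\eqref{dertria} that freezes the starting point of the flow at $Y_{t_k}^\theta$: the resulting difference terms $\partial^\alpha \bar X_{t,T}^{Y_t^\theta}-\partial^\alpha \bar X_{t,T}^{Y_{t_k}^\theta}$ contribute an extra $O((t-t_k)^{1/2})$ via Lemma~\ref{firstlem} and Theorem~\ref{seclem}; for the fully frozen term one conditions first on $\mathcal F_t$ (the Markov property collapses the flow factor to a deterministic function of $Y_{t_k}^\theta$) and then on $\mathcal F_{t_k}$, which annihilates the stochastic integral, see~\eqref{cttk}. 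Your phrase ``pairs with a second moment of a Wiener increment'' suggests instead Taylor-expanding the test factor in $Y_t^\theta-Y_{t_k}^\theta$, but that would call for moment bounds on third derivatives of the flow, which Lemma~\ref{weakconvlya0} does not supply in the minimal case $p=3$. The freezing-plus-conditioning device is what makes the argument close under the stated hypotheses.
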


\begin{proof}
Throughout the proof, we write~$D_{\abs{\theta}} = \{ y:\abs{y} < \exp(\abs{\log \sup_k t_{k+1} - t_k}^{\frac{1}{2}}) \}$. 
To begin, we rewrite the approximation~$Y_t^{\theta}$ as the solution of a SDE. For every~$k\in\mathbb{N}_0\cap[0,n^*-1]$,~$\theta = (t_0,\dots,t_{n^*})\in\Theta$, consider
\begin{align}
Z_t^{\theta,k} &= \begin{cases}
0 & \textrm{if }t \leq t_k\\
b(Y_{t_k}^{\theta}) (t-t_k) + \sigma(Y_{t_k}^{\theta}) (W_t - W_{t_k})& \textrm{if }t_k< t \leq t_{k+1}\\
b(Y_{t_k}^{\theta}) (t_{k+1}-t_k) + \sigma(Y_{t_k}^{\theta}) (W_{t_{k+1}} - W_{t_k})& \textrm{if }t_{k+1}< t
\end{cases}\nonumber\\
&= \int_0^t \mathds{1}_{(t_k,t_{k+1}]}(u)b(Y_{t_k}^{\theta}) du  + \int_0^t \mathds{1}_{(t_k,t_{k+1}]}(u)\sigma(Y_{t_k}^{\theta}) dW_u,\label{Zbssde1}
\end{align}
defined for all~$t\in[0,T]$, then~$Y_t^{\theta}$ solves
\begin{equation}
Y_t^{\theta} = Y_0^{\theta} + \sum_{k = 0}^{n^*-1} \mathds{1}_{D_{\abs{\theta}}}(Y_{t_k}^{\theta})\frac{Z_t^{\theta,k}}{1+\abs{Z_t^{\theta,k}}^{q'}},\label{Zbssde2}
\end{equation}
where by It\^{o}'s rule, for~$\hat{f}:\mathbb{R}^n\rightarrow \mathbb{R}^n$ given by~$\hat{f}(z) = \frac{z}{1+\abs{z}^{q'}}$, it holds that 
\begin{align}
\frac{Z_t^{\theta,k}}{1+\abs{Z_t^{\theta,k}}^{q'}} &= \int_0^t \mathds{1}_{(t_k,t_{k+1}]}(u) (b(Y_{t_k}^{\theta}) + b^*(Y_{t_k}^{\theta},Z_u^{\theta,k}))du\nonumber\\
&\quad + \int_0^t \mathds{1}_{(t_k,t_{k+1}]}(u) (\sigma(Y_{t_k}^{\theta}) + \sigma^*(Y_{t_k}^{\theta},Z_u^{\theta,k})) dW_u\label{Zbssde3}
\end{align}
and~$b^*:\mathbb{R}^n\times\mathbb{R}^n\rightarrow \mathbb{R}^n$ and~$\sigma^*:\mathbb{R}^n\times\mathbb{R}^n\rightarrow \mathbb{R}^{n\times n}$ are given by
\begin{align}
b^*(y,z) &= -b(y)\bigg(\frac{\abs{z}^{q'}}{1+\abs{z}^{q'}}\bigg) - q' z\bigg(z\cdot b(y) \frac{\abs{z}^{q' - 2}}{(1+\abs{z}^{q'})^2}\bigg)\nonumber\\
&\quad + \frac{1}{2}((\sigma\sigma^{\top}(y)):D^2)\hat{f}(z)\label{bstardef}\\
\sigma^*(y,z) &= -\sigma(y)\bigg(\frac{\abs{z}^{q'} }{1+\abs{z}^{q'}}\bigg) - q' z\bigg(z^{\top} \sigma(y) \frac{\abs{z}^{q' - 2}}{(1+\abs{z}^{q'})^2}\bigg).\label{sigstardef}
\end{align}
By assumption it holds that~$q' \geq 3$, therefore there exists a constant~$\nu_2\geq 2$ such that the second order derivatives satisfy~$\abs{\partial_{ij}^2\hat{f}(z)} \leq C\abs{z}^{\nu_2} $ for all~$z\in\mathbb{R}^n$,~$i,j\in\mathbb{N}\cap [1,n]$. 

By Lemma~\ref{weakconvlya0} and Proposition~\ref{iagthm}, 
for any~$\theta\in\Theta$, it holds that
\begin{align}
&\mathbb{E}\Big[f\Big(X_{0,T}^{Y_0^{\theta}}\Big)\Big]- \mathbb{E}\Big[f\Big(Y_T^{\theta}\Big)\Big]\nonumber\\
&\quad = \sum_{k=0}^{n^*-1}\mathbb{E}\bigg[\int_{t_k}^{t_{k+1}} \Big(\Big(\Big(\Big(b(Y_t^{\theta}) - \mathds{1}_{D_{\abs{\theta}}}(Y_{t_k}^{\theta})(b(Y_{t_k}^{\theta})\nonumber\\
&\qquad + b^*( Y_{t_k}^{\theta},Z_t^{\theta,k} ))\Big) \cdot \nabla\Big) \bar{X}_{t,T}^{Y_t^{\theta}} \Big) \cdot \nabla\Big) f(\bar{X}_{t,T}^{Y_t^{\theta}}) dt\bigg]\nonumber\\
&\qquad + \frac{1}{2}\mathbb{E}\bigg[\int_{t_k}^{t_{k+1}} \sum_{i,j = 1}^n\Big( \sigma(Y_t^{\theta})\sigma(Y_t^{\theta})^{\top} - \mathds{1}_{D_{\abs{\theta}}}(Y_{t_k}^{\theta})(\sigma(Y_{t_k}^{\theta}) \nonumber\\
&\qquad + \sigma^*(Y_{t_k}^{\theta},Z_t^{\theta,k}))(\sigma(Y_{t_k}^{\theta}) + \sigma^*(Y_{t_k}^{\theta},Z_t^{\theta,k}))^{\top}\Big)_{ij} \nonumber\\
&\qquad \cdot\Big( ((\partial_i \bar{X}_{t,T}^{Y_t^{\theta}} \otimes \partial_j \bar{X}_{t,T}^{Y_t^{\theta}} ) : D^2) f(\bar{X}_{t,T}^{Y_t^{\theta}})+ (\partial_{ij}^2 \bar{X}_{t,T}^{Y_t^{\theta}} \cdot \nabla) f(\bar{X}_{t,T}^{Y_t^{\theta}})\Big)dt\bigg].\label{weakexp}
\end{align}
For the first terms on the right-hand side of~\eqref{weakexp}, denoting
\begin{equation}\label{bhatstar1}
\hat{b}^*(y',y,z) = b(y') - \mathds{1}_{D_{\abs{\theta}}}(y) (b(y) + b^*(y,z)), 
\end{equation}
it holds that
\begin{align}
&\Big(\Big(\Big(\hat{b}^*(Y_t^{\theta},Y_{t_k}^{\theta},Z_t^{\theta,k})\cdot \nabla\Big) \bar{X}_{t,T}^{Y_t^{\theta}} \Big) \cdot \nabla\Big) f(\bar{X}_{t,T}^{Y_t^{\theta}})\nonumber\\
&\quad = \Big(\Big(\Big(\hat{b}^*(Y_t^{\theta},Y_{t_k}^{\theta},Z_t^{\theta,k}) \cdot \nabla\Big) \Big(\bar{X}_{t,T}^{Y_t^{\theta}} - \bar{X}_{t,T}^{Y_{t_k}^{\theta}} \Big) \Big) \cdot \nabla\Big) f(\bar{X}_{t,T}^{Y_t^{\theta}})\nonumber\\
&\qquad + \Big( \Big(\Big(\hat{b}^*(Y_t^{\theta},Y_{t_k}^{\theta},Z_t^{\theta,k}) \cdot \nabla\Big) \bar{X}_{t,T}^{Y_{t_k}^{\theta}} \Big) \cdot \nabla\Big) \Big( f(\bar{X}_{t,T}^{Y_t^{\theta}}) - f(\bar{X}_{t,T}^{Y_{t_k}^{\theta}}) \Big)\nonumber\\
&\qquad + \Big(\Big(\Big(\hat{b}^*(Y_t^{\theta},Y_{t_k}^{\theta},Z_t^{\theta,k}) \cdot \nabla\Big) \bar{X}_{t,T}^{Y_{t_k}^{\theta}} \Big) \cdot \nabla\Big) f(\bar{X}_{t,T}^{Y_{t_k}^{\theta}}).\label{dertria}
\end{align}
The first part of the factor involving~$b$ has the form
\begin{align}
&b(Y_t^{\theta}) - \mathds{1}_{D_{\abs{\theta}}}(Y_{t_k}^{\theta}) b(Y_{t_k}^{\theta})\nonumber\\
&\quad = \bigg[b(Y_t^{\theta})- b(Y_{t_k}^{\theta})\bigg] 
+ \bigg[b(Y_{t_k}^{\theta}) - \mathds{1}_{D_{\abs{\theta}}}(Y_{t_k}^{\theta}) b(Y_{t_k}^{\theta}) \bigg]\nonumber\\
&\quad = \int_{t_k}^t \mathds{1}_{D_{\theta}}(Y_{t_k})\Big( \Big((b(Y_{t_k}^{\theta}) + b^*(Y_{t_k}^{\theta},Z_u^{\theta,k}))\cdot\nabla \Big) b(Y_u^{\theta})\nonumber\\
&\qquad + \frac{1}{2} \Big(\Big((\sigma(Y_{t_k}^{\theta}) +  \sigma^*(Y_{t_k}^{\theta},Z_u^{\theta,k}) )(\sigma(Y_{t_k}^{\theta}) +  \sigma^*(Y_{t_k}^{\theta},Z_u^{\theta,k}) )^{\top}\Big):D^2 \Big) b(Y_u^{\theta}) \Big) du\nonumber\\
&\qquad+\int_{t_k}^t \mathds{1}_{D_{\theta}}(Y_{t_k})\Big( (\sigma(Y_{t_k}^{\theta}) + \sigma^*(Y_{t_k}^{\theta},Z_u^{\theta,k}))\cdot\nabla \Big) b(Y_u^{\theta}) dW_u \nonumber\\
&\qquad + b(Y_{t_k}^{\theta}) (1 - \mathds{1}_{D_{\abs{\theta}}}(Y_{t_k}^{\theta})),\label{dertria1}
\end{align}
where the integral w.r.t.~$u$ is uniformly bounded in~$\theta$ by~$C(t-t_k)$ in~$L^2(\mathbb{P})$ norm, the stochastic integral is uniformly bounded in~$\theta$ by~$C(t-t_k)^{\frac{1}{2}}$ in~$L^2(\mathbb{P})$ norm and the last term has the same property as the integral w.r.t.~$u$ (and in fact of arbitrary order in~$t-t_k$) by the calculation of inequalities~(47),~(48) in~\cite{MR4079431}. 
Using the definition~\eqref{bstardef} for~$b^*$ along with~$q'\geq 3$, there exists a constant~$\nu_2 \geq 2$ such that the remaining part of the factor involving~$b$ from~\eqref{dertria} has the bound
\begin{equation}\label{dertria2}
\abs{\mathds{1}_{D_{\abs{\theta}}}(Y_{t_k}^{\theta}) b^*(Y_{t_k}^{\theta},Z_t^{\theta,k})} \leq C\abs{b(Y_{t_k}^{\theta})} \abs{Z_t^{\theta,k}}^{\nu_2}
\end{equation}
for all~$\theta\in\Theta$. Putting~\eqref{dertria1} and~\eqref{dertria2} into the first term on the right-hand side of~\eqref{dertria} and using H\"older's inequality,~Assumptions~\ref{A4lya}\ref{jgi2}\ref{ltfirst}, equations~\eqref{Zbssde1}-\eqref{sigstardef}, 
Lemma~\ref{weakconvlya0}, Lemma~\ref{firstlem}, Theorem~\ref{seclem}, Markov property (Theorem~2.13 in~\cite{MR1731794}; see also justification in the proof of Lemma~\ref{weakconvlya0}), the fact that if~$V$ is a Lyapunov function then~$(V+1)^l$ with~$0 < l\leq 1$ is also one and exponential integrability for~$U$ as in Theorem~2.9 in~\cite{MR3766391} yield 
\begin{equation}\label{ttkbd}
\mathbb{E}\bigg[\bigg\vert\Big(\Big(\Big(\hat{b}^*(Y_t^{\theta},Y_{t_k}^{\theta},Z_t^{\theta,k}) \cdot \nabla\Big) \Big(\bar{X}_{t,T}^{Y_t^{\theta}} - \bar{X}_{t,T}^{Y_{t_k}^{\theta}} \Big) \Big) \cdot \nabla\Big) f(\bar{X}_{t,T}^{Y_t^{\theta}})\bigg\vert\bigg] \leq C(t-t_k)
\end{equation}
for all~$t\in[t_k,t_{k+1})$,~$\theta\in\Theta$. The same arguments can be used for the second term on the right-hand side of~\eqref{dertria}, along with the additional estimate
\begin{align*}
&\mathbb{E}[\abs{\partial_i f(\bar{X}_{t,T}^{Y_t^{\theta}}) - \partial_i f(\bar{X}_{t,T}^{Y_{t_k}^{\theta}})}^r]\\
&\quad \leq \mathbb{E}\bigg[\bigg\vert\int_0^1 
\nabla \partial_i f(\lambda \bar{X}_{t,T}^{Y_t^{\theta}} + (1-\lambda) \bar{X}_{t,T}^{Y_{t_k}^{\theta}} )d\lambda \cdot (\bar{X}_{t,T}^{Y_t^{\theta}} - \bar{X}_{t,T}^{Y_{t_k}^{\theta}} ) \bigg\vert^r\bigg] \\
&\quad \leq  C\Big(1 + \mathbb{E}\Big[\abs{\bar{X}_{t,T}^{Y_t^{\theta}}}^{2q^{\dagger}}\Big] + \mathbb{E}\Big[\abs{\bar{X}_{t,T}^{Y_{t_k}^{\theta}}}^{2q^{\dagger}}\Big]\Big)^{\frac{r}{2}}  \Big(\mathbb{E}\abs{\bar{X}_{t,T}^{Y_t^{\theta}} - \bar{X}_{t,T}^{Y_{t_k}^{\theta}} }^2\Big)^{\frac{r}{2}}\\
&\quad \leq C\bigg(\mathbb{E}\bigg[\exp\bigg(\frac{U(Y_t^{\theta})}{e^{\rho t}}\bigg)\bigg] + \mathbb{E}\bigg[\exp\bigg(\frac{U(Y_{t_k}^{\theta})}{e^{\rho t_k}}\bigg)\bigg]\bigg)\mathbb{E}\Big[\abs{Y_t^{\theta} - Y_{t_k}^{\theta}}^r\Big]\\
&\quad \leq C(t-t_k)^{\frac{r}{2}}
\end{align*}
where~$r>1$, in order to obtain the same right-hand bound as~\eqref{ttkbd}. For the last term on the right-hand side of~\eqref{dertria}, we rely more prominently on the Markov property. For any~$R>0$, it holds that
\begin{align*}
&\mathbb{E}\bigg[\Big(\Big(\Big(\hat{b}^*(Y_t^{\theta},Y_{t_k}^{\theta},Z_t^{\theta,k}) \cdot \nabla\Big) \bar{X}_{t,T}^{Y_{t_k}^{\theta}}  \Big) \cdot \nabla\Big) f\Big(\bar{X}_{t,T}^{Y_{t_k}^{\theta}}\Big)\wedge R\bigg]\\
&\quad= \mathbb{E}\bigg[\mathbb{E}\bigg[\Big(\Big(\Big(\hat{b}^*(Y_t^{\theta},Y_{t_k}^{\theta},Z_t^{\theta,k}) \cdot \nabla\Big) \bar{X}_{t,T}^{Y_{t_k}^{\theta}}  \Big) \cdot \nabla\Big) f\Big(\bar{X}_{t,T}^{Y_{t_k}^{\theta}}\Big) \wedge R \bigg\vert \mathcal{F}_t \bigg] \bigg]\\
&\quad= \sum_{i = 1}^n\mathbb{E}\bigg[\hat{b}_i^*(Y_t^{\theta},Y_{t_k}^{\theta},Z_t^{\theta,k}) \mathbb{E}\Big[ \Big(\partial_i \bar{X}_{t,T}^{Y_{t_k}^{\theta}}\ \cdot \nabla\Big) f\Big(\bar{X}_{t,T}^{Y_{t_k}^{\theta}}\Big) \wedge R \Big\vert \mathcal{F}_t \Big] \bigg]\\
&\quad= \sum_{i = 1}^n\mathbb{E}\bigg[\mathbb{E}\Big[\hat{b}_i^*(Y_t^{\theta},Y_{t_k}^{\theta},Z_t^{\theta,k})\Big\vert \mathcal{F}_{t_k}\Big] \mathbb{E}\Big[ \Big(\partial_i \bar{X}_{t,T}^{Y_{t_k}^{\theta}}\ \cdot \nabla\Big) f\Big(\bar{X}_{t,T}^{Y_{t_k}^{\theta}}\Big) \wedge R \Big\vert \mathcal{F}_{t_k}\Big]  \bigg],
\end{align*}
so that~\eqref{bhatstar1},~\eqref{dertria1} and~\eqref{dertria2}, where the only order~$\frac{1}{2}$ term in~$t-t_k$ from~\eqref{dertria1} has vanished, together with the same arguments as before and dominated convergence in~$R$ yields
\begin{equation}\label{cttk}
\mathbb{E}\bigg[\Big(\Big(\Big(\hat{b}^*(Y_t^{\theta},Y_{t_k}^{\theta},Z_t^{\theta,k}) \cdot \nabla\Big) \bar{X}_{t,T}^{Y_{t_k}^{\theta}}  \Big) \cdot \nabla\Big) f\Big(\bar{X}_{t,T}^{Y_{t_k}^{\theta}}\Big) \bigg] \leq C (t-t_k)
\end{equation}
for all~$t\in[t_k,t_{k+1})$,~$\theta\in\Theta$. Gathering the arguments from~\eqref{ttkbd} onwards, the integrals involving~$b$ in~\eqref{weakexp} have been shown to be of order~$t-t_k$. For the integrals involving~$\sigma$ in~\eqref{weakexp}, after rewriting
\begin{align*}
&\sigma(Y_t^{\theta})\sigma(Y_t^{\theta})^{\top} -\mathds{1}_{D_{\abs{\theta}}}(Y_{t_k}^{\theta} ) (\sigma(Y_{t_k}^{\theta}) + \sigma^*(Y_{t_k}^{\theta},Z_t^{\theta,k}))(\sigma(Y_{t_k}^{\theta}) + \sigma^*(Y_{t_k}^{\theta},Z_t^{\theta,k}))^{\top}\\
&\quad = \Big(\sigma(Y_t^{\theta}) - \mathds{1}_{D_{\abs{\theta}}}(Y_{t_k}^{\theta} ) (\sigma(Y_{t_k}^{\theta}) + \sigma^*(Y_{t_k}^{\theta},Z_t^{\theta,k}))\Big)\sigma(Y_t^{\theta})^{\top}\\
&\qquad + \mathds{1}_{D_{\abs{\theta}}}(Y_{t_k}^{\theta} ) (\sigma(Y_{t_k}^{\theta}) + \sigma^*(Y_{t_k}^{\theta},Z_t^{\theta,k})) \Big(\sigma(Y_t^{\theta})^{\top} - (\sigma(Y_{t_k}^{\theta}) + \sigma^*(Y_{t_k}^{\theta},Z_t^{\theta,k}))^{\top}\Big)
\end{align*}
and similarly
\begin{align*}
&\Big(\Big(\partial_i \bar{X}_{t,T}^{Y_t^{\theta}} \otimes \partial_j \bar{X}_{t,T}^{Y_t^{\theta}} \Big) : D^2\Big) f\Big(\bar{X}_{t,T}^{Y_t^{\theta}}\Big) + \Big(\partial_{ij}^2 \bar{X}_{t,T}^{Y_t^{\theta}} \cdot \nabla\Big) f\Big(\bar{X}_{t,T}^{Y_t^{\theta}}\Big)\\
&\quad = \Big(\Big(\Big(\partial_i \bar{X}_{t,T}^{Y_t^{\theta}} - \partial_i \bar{X}_{t,T}^{Y_{t_k}^{\theta}}\Big) \otimes \partial_j \bar{X}_{t,T}^{Y_t^{\theta}} \Big) : D^2\Big) f\Big(\bar{X}_{t,T}^{Y_t^{\theta}}\Big)\\
&\qquad + \Big(\Big(\partial_{ij}^2 \bar{X}_{t,T}^{Y_t^{\theta}} - \partial_{ij}^2 \bar{X}_{t,T}^{Y_{t_k}^{\theta}} \Big)\cdot \nabla\Big) f\Big(\bar{X}_{t,T}^{Y_t^{\theta}}\Big)\\
&\qquad +  \Big(\Big(\Big(\partial_i \bar{X}_{t,T}^{Y_t^{\theta}} - \partial_i \bar{X}_{t,T}^{Y_{t_k}^{\theta}}\Big) \otimes \partial_j \bar{X}_{t,T}^{Y_{t_k}^{\theta}} \Big) : D^2\Big) f\Big(\bar{X}_{t,T}^{Y_t^{\theta}}\Big)\\
&\qquad  + \Big( \partial_{ij}^2 \bar{X}_{t,T}^{Y_{t_k}^{\theta}} \cdot \nabla\Big) \Big(f\Big(\bar{X}_{t,T}^{Y_t^{\theta}}\Big)\! -\! f\Big(\bar{X}_{t,T}^{Y_{t_k}^{\theta}}\Big) \Big)\\
&\qquad + \Big(\Big( \partial_i \bar{X}_{t,T}^{Y_{t_k}^{\theta}} \otimes \partial_j \bar{X}_{t,T}^{Y_{t_k}^{\theta}} \Big) : D^2\Big) \Big( f\Big(\bar{X}_{t,T}^{Y_t^{\theta}}\Big) - f\Big(\bar{X}_{t,T}^{Y_{t_k}^{\theta}}\Big)\Big) \\
&\qquad + \Big( \partial_{ij}^2 \bar{X}_{t,T}^{Y_{t_k}^{\theta}} \cdot \nabla\Big)  f(\bar{X}_{t,T}^{Y_{t_k}^{\theta}}) + \Big(\Big( \partial_i \bar{X}_{t,T}^{Y_{t_k}^{\theta}} \otimes \partial_j \bar{X}_{t,T}^{Y_{t_k}^{\theta}} \Big) : D^2\Big)  f\Big(\bar{X}_{t,T}^{Y_{t_k}^{\theta}}\Big),
\end{align*}
the same bound as~\eqref{cttk} holds for all of~\eqref{weakexp} by the same treatment as for~\eqref{cttk}.
\end{proof}

\begin{remark}\label{remprof}
In this remark, it is sketched that 
some relatively abstract weakening of Assumption~\ref{A4lya}, which is made to include the case of globally Lipschitz coefficients, is sufficient for Theorem~\ref{weakconvlya} to hold. Here, the main issues 
are that we would like to include~$U$ growing logarithmically (for polynomial Lyapunov functions) instead of assuming the lower bound~$U\geq c'\abs{1+x}^{\gamma'}$ and also to have a bound in place of~\eqref{bubound} that serves the same purpose as~\eqref{bubound}. 
There are two uses of these conditions in the proofs that require particular attention. The first is the exponential integrability property of the discretization given by~\cite[Theorem~2.9]{MR3766391}, which uses only~\eqref{bubound} out of the two conditions. The second is in obtaining a good enough order for the last term on the right-hand side of~\eqref{dertria1}, which uses the derivation for~(47)-(48) in~\cite{MR4079431}. 
Moreover, for the first point about exponential integrability, an inspection of the proofs of Theorem~2.9 and Lemma~2.8 both in~\cite{MR3766391} shows that~\eqref{bubound} (with~$\abs{\alpha} = 1$) is only strictly required for estimates of the form
\begin{equation}\label{leest}
\abs{b(x)} + \|\sigma(x)\| \leq Cs^{-\alpha \gamma} \qquad \forall s\in (0,\sup_k t_{k+1}-t_k]
\end{equation}
for~$x\in\{ x\in\mathbb{R}^n : U(x) \leq C(\sup_k t_{k+1}-t_k)^{-\alpha} \}$ and appropriately small~$\alpha>0$.\\
To resolve the issues, assume for all~$k>1$ that there exists~$U_k$ satisfying the assumptions on~$U$, except, in place of the corresponding parts in Assumption~\ref{A4lya}, that
\begin{itemize}
\item $U_k(x)\geq c'\log(1+\abs{x})$ for all~$x$,
\item inequality~\eqref{leest} holds for~$x\in D'$ for some~$D' = D'(\theta)\subset\mathbb{R}^n$ replacing the set appearing in~\eqref{stotamapp} and satisfying
\begin{itemize}
\item for any~$\alpha>0$ and~$k>1$, there exists~$c>0$ such that~$D' \in \mathcal{B}( \{ U_k \leq c (\sup_{k'} t_{k'+1}-t_{k'})^{-\alpha}\})$ and
\item it holds that~$\mathbb{P}(Y_T^{\theta} \in D'(\theta))=O(\sup_{k'} t_{k'+1}-t_{k'})$ as~$\sup_{k'} t_{k'+1}-t_{k'}\rightarrow 0$,
\end{itemize}
\item there exists~$K>1$ such that the inequalities $\abs{\partial^{\alpha}b} + \|\partial^{\alpha}\sigma\|\leq Ce^{\frac{U_k}{ke^{\rho T}}}$,~$\abs{\bar{U}} \leq c(1+U_k)^{\gamma}$ hold for~$k>K$ and~$\rho$ independent of~$k$.
\end{itemize}

In Assumption~\ref{A4lya}, these conditions are satisfied by taking~$U=U_k$ for all~$k>1$. In the globally Lipschitz (with polynomial growing second derivatives) case, one can take~$D':=\{ y : \abs{y} < (\sup_{k'} t_{k'+1}-t_{k'})^{-\epsilon} \}$ with small~$\epsilon>0$ independent of~$k$, in which case, for~$U_k:\mathbb{R}^n\rightarrow\mathbb{R}$ given by $U_k(y) = (k^2\tilde{c}e^{\rho T})^2+k^2\tilde{c}e^{\rho T}\log (1+\abs{y}^2)$,~$\bar{U} = 0$ and some large enough~$\rho,\tilde{c} >0$ depending only on the global Lipschitz constant (for~$b,\sigma$) and the degree of the polynomial bound of the second derivatives respectively and not on~$k$, it holds that
\begin{itemize}
\item for any~$\alpha>0$ and~$k>1$, there exists~$c>0$ such that~$D' \in \mathcal{B}(\{ U_k \leq c (\sup_{k'} t_{k'+1}-t_{k'})^{-\alpha}\})$,
\item $\abs{b(x)} + \|\sigma(x)\| \leq C(1+\abs{x}) \leq C(1+ (\sup_{k'} t_{k'+1}-t_{k'})^{-\epsilon})$
for all~$x\in D'$ (attaining~\eqref{leest} and playing the role of~(54),~(85) both in~\cite{MR3766391}),
\item with the exponential integrability of~$U_k$ given by the last two points and following the approach of~$(47)$-$(48)$ in~\cite{MR4079431},
\begin{align*}
&\mathbb{P}\bigg(\abs{Y_T^{\theta}}\geq \bigg(\sup_{k'} t_{k'+1}-t_{k'}\bigg)^{-\epsilon} \bigg)\\
&= \mathbb{P}\bigg( e^{\frac{U_k(Y_T^{\theta})}{ke^{\rho T}}} \geq e^{k^3\tilde{c}^2e^{\rho T}}\bigg(\bigg(\sup_{k'} t_{k'+1}-t_{k'}\bigg)^{-2\epsilon} +1\bigg)^{k\tilde{c}}\ \bigg) \\
&\leq C e^{-k^3\tilde{c}^2e^{\rho T}} \bigg(\bigg(\sup_{k'} t_{k'+1}-t_{k'}\bigg)^{-2\epsilon} +1\bigg)^{-k\tilde{c}},
\end{align*}
which is arbitrary order in~$\sup_{k'} t_{k'+1}-t_{k'}$ for large enough~$k$.
\end{itemize}

\end{remark}

\section{Examples}\label{lyaexamples}
In this section, specific examples are provided where the results presented above are applicable. As stated in the introduction, most of the examples in~\cite{https://doi.org/10.48550/arxiv.1309.5595,MR3766391} are viable and many Lyapunov functions have already been given in these references (applicable here after a simple transformation, see Remark~\ref{before51}). A notable exception is the stochastic SIR model, where the Lipschitz constant of the diffusion coefficients grow too quickly for the Lyapunov functions given there (besides, the domain in that example is not~$\mathbb{R}^n$ as assumed for the main results in the present work). Here, the focus is placed on two particular examples differing in some considerable way to analogies in the aforementioned references. In Section~\ref{langwithvarlya}, our results are applied to the underdamped Langevin dynamics with variable friction, which in general (for example as soon as friction depends on position) does not have globally Lipschitz (nor monotone) coefficients; this is motivated by the work~\cite{optfric}. In Section~\ref{stochdufflya}, a Lyapunov function ($V_0$ satisfying~$LV_0\leq CV_0$) is given for the Stochastic Duffing-van der Pol equation with parameter values not accounted for in previous works mentioned above. 
\subsection{Langevin equation with variable friction}\label{langwithvarlya}
Here, the backward Kolmogorov equation and Poisson equation associated with the Langevin equation are shown to hold even in cases where the friction matrix depends on both position and velocity variables. The pointwise solution to the backward Kolmogorov equation may be used to obtain a distributional solution to the associated Poisson equation and in doing so, comprises a first step towards a gradient formula for the asymptotic variance as in~\cite{optfric}. In addition and perhaps more importantly, solutions to the Poisson equation allows one to obtain central limit theorems for additive functionals~\cite[Section~3]{MR3069369}. The results here give a rigorous way to derive distributional solutions to the Poisson equation in the proof of Proposition~3.10 in~\cite{MR3069369} (in particular, it is not clear that the domain of~$L^*$ includes~$C_c^{\infty}$, given the interpretation of~$L$ as a limit in~$L^2$ earlier in the same section). 
In this case, hypoellipticity is required to complete the argument to obtain the central limit theorem, which means that Proposition~4.18 in~\cite{MR3305998} may also be used in the case of continuous bounded observables; the results here extend the space of observables beyond that of continuous bounded functions at the cost of stronger assumptions on the coefficients of the SDE.\\
\begin{assumption}\label{langa}
The function $U\in C^3(\mathbb{R}^n)$ is such that there exists $\tilde{k},\tilde{K}>0$ with 
$\nabla U(q)\cdot q \geq \tilde{k}\abs*{x}^2-\tilde{K}$ for all $q\in\mathbb{R}^n$. The friction matrix $\Gamma\in C^\infty(\mathbb{R}^{2n}, \mathbb{R}^{n\times n})\cap L^{\infty}$ is symmetric positive definite everywhere such that there exist\footnote{It is possible to allow for $\beta_1 = 1$, but at the cost of more stringent bounds on the coefficients.} $\beta_1<1$, $\tilde{m}, \tilde{M}>0$ with $\abs*{\nabla\!_p\cdot\Gamma(q,p)}
< \tilde{M} (1+\abs*{q}^{\beta_1} + \abs*{p}^{\beta_1})$ and $\Gamma(q,p)\geq \tilde{m}I$ for all $q,p\in\mathbb{R}^n$.
\end{assumption}
Note Assumption~\ref{langa} implies that for $R>1$, $q\in\mathbb{R}^n$ with $\abs*{q} = 1$,
\begin{align*}
U(Rq) - U(q) &= \int_1^R\nabla U(\lambda q)\cdot\frac{\lambda q}{\lambda} d\lambda \geq \int_1^R (\tilde{k}\abs{\lambda q}^2 - \tilde{K})\lambda^{-1}d\lambda\\
& = \frac{\tilde{k}(R^2-1)}{2} - \tilde{K}\log R,
\end{align*}
which yields $U(q) \geq \frac{\tilde{k}}{4}\abs*{q}^2 - C$ for all $q\in\mathbb{R}^n$ and some constant $C>0$. Consider $\mathbb{R}^{2n}$-valued solutions $(q_t,p_t)$ to
\begin{subequations}\label{lvf}
\begin{align}
dq_t &= p_tdt\\
dp_t &= -\nabla U(q_t)dt + \nabla\!_p\cdot\Gamma(q_t,p_t) dt - \Gamma(q_t,p_t)p_tdt + \sqrt{\Gamma}(q_t,p_t)dW_t,
\end{align}
\end{subequations}
where $\sqrt{\Gamma}$ denotes some matrix satisfying $\sqrt{\Gamma}\sqrt{\Gamma}^{\top} = \Gamma$ and $(\nabla\!_p\cdot\Gamma)_i = \sum_j\nabla_{p_j}\Gamma_{ij}$. Equation~\eqref{lvf} is~\eqref{sde0} with~$b(t,(q,p))=(p,-\nabla U(q) + \nabla\!_p\cdot\Gamma(q,p) - \Gamma(q,p)p)$ and~$\sigma_{i,j}=0$ for~$(i,j)\in\{(i',j'):i'\in[1,n]\cap\mathbb{N}\}\cup\{(i',j'):j'\in[1,n]\cap\mathbb{N}\}$,~$(\sigma(t,(q,p)))_{n+i,n+j}=(\sqrt{\Gamma}(q,p))_{i,j}$ for~$i,j\in[1,n]\cap\mathbb{N}$. 
For~$b = \min(\tilde{k}^{-1}(\sup_{\mathbb{R}^{2n}}\abs*{\Gamma})^{-1},\tilde{m},\tilde{k}^{\frac{1}{2}})$,~$a = \frac{1}{4}\min(\frac{b}{\tilde{k}},\tilde{m})$, let
\begin{equation}\label{friclya}
V_{\gamma}(q,p) = e^{\gamma(U(q) + a\abs*{q}^2 + bq\cdot p + \abs*{p}^2)}.
\end{equation}
In the following,~$\abs*{M}$ denotes the operator norm of~$M\in\mathbb{R}^{n\times n}$. Proposition~\ref{Langevinprop} shows that the assumptions on the coefficients~$b,\sigma$ of~\eqref{sde} in Theorems~\ref{alters2},~\ref{alters3},~\ref{alters4} are satisfied.
\begin{prop}\label{Langevinprop}
Under Assumption~\ref{langa}, there exists constants $c_1, c_2, c_3>0$ such that for all $\gamma$ satisfying
\begin{equation}\label{gammabs}
0<\gamma \leq \gamma^* := \frac{1}{8}\min\bigg(\bigg(\tilde{k}b\sup_{\mathbb{R}^{2n}}\abs*{\Gamma}\bigg)^{-1}, \tilde{m}\bigg(4\sup_{\mathbb{R}^{2n}}\abs*{\Gamma}\bigg)^{-1}\bigg),
\end{equation}
it holds that
\begin{equation}\label{lhslang}
LV_{\gamma}(q,p) \leq (c_1 - c_2\abs*{q}^2 - c_3\abs*{p}^2)\gamma V_{\gamma}(q,p)
\end{equation}
for all $(q,p)\in\mathbb{R}^{2n}$, where $L$ is the generator~\eqref{jen} associated with~\eqref{lvf}, explicitly given by
\begin{align*}
LV(q,p) &= p\cdot\nabla\!_q V(q,p) - \nabla U(q)\cdot\nabla\!_p V(q,p) + (\nabla\!_p\cdot\Gamma(q,p))\cdot\nabla\!_p V(q,p) \\
&\quad- (\Gamma(q,p)p)\cdot\nabla\!_p V(q,p) + (1/2)\Gamma(q,p):D_p^2V(q,p).
\end{align*}
If in addition there exist $0<\beta_2<1$, $\bar{M}>0$ such that 
\begin{align*}
\abs*{\partial_i ( \nabla\!_p\cdot\Gamma(q,p) - \nabla U(q))} &\leq \bar{M}(1 - \inf U + U(q) + \abs*{p}^2)^{\beta_2}\\
\abs*{\partial_i \Gamma(q,p)} &\leq \bar{M}(1 - \inf U^{\frac{1}{2}} + U(q)^{\frac{1}{2}} + \abs*{p})^{\beta_2}\\
\abs*{\partial_i\partial_j(\nabla\!_p\cdot\Gamma(q,p) - \nabla U(q))} + \abs*{\partial_i\partial_j\Gamma(q,p)} &\leq \bar{M}(1+e^{(U(q) + \abs*{p}^2)^{\beta_2}})
\end{align*}
for all $q,p\in\mathbb{R}^n$, $i,j\in\{1,\dots, 2n\}$, then Assumptions~\ref{A1prime} and~\ref{A2prime} (with~$p=2$) are satisfied with $V_i=\hat{V}_k^{s,T} = V_{\gamma}$ with any $\gamma$ satisfying~\eqref{gammabs}, $G(q,p) = C(1 - \inf U + U(q) + \abs*{p}^2)^{\beta_3}$ for some constants $C>0$ and $\beta_2 < \beta_3 < 1$.
\end{prop}

\begin{proof}
The left-hand side of~\eqref{lhslang} calculates as
\begin{align}
&(p\cdot\nabla\!_q - \nabla\!_q U(q) \cdot \nabla\!_p + (\nabla\!_p\cdot\Gamma(q,p))\cdot\nabla\!_p - (\Gamma(q,p)p)\cdot\nabla\!_p + \Gamma(q,p):D^2) V_{\gamma}(q,p)\nonumber\\
&\quad = (2aq\cdot p + b\abs*{p}^2 - b\nabla\!_q U(q)\cdot q + (\nabla\!_p\cdot\Gamma(q,p) - \Gamma(q,p)p)\cdot(bq+2p)\nonumber\\
&\qquad + 2\textrm{Tr}\Gamma(q,p) + \gamma\Gamma(q,p): (b^2qq^{\top} + 4pp^{\top}) ) \gamma V_{\gamma}(q,p)\nonumber\\
&\quad\leq \bigg( \bigg( a - \frac{b}{\tilde{k}} + \frac{1}{2} b^2\abs*{\Gamma} + b^2\gamma \abs*{\Gamma} \bigg) \abs*{q}^2 + \bigg(a + b + \frac{1}{2}\abs*{\Gamma} -2\tilde{m} + 4\gamma\abs*{\Gamma}\bigg)\abs*{p}^2 \nonumber\\
&\qquad + \tilde{M}(1+\abs*{q}^{\beta_1} + \abs*{p}^{\beta_2})\abs*{bq + 2p} + b\tilde{K} + 2\textrm{Tr}\Gamma\bigg) \gamma V_{\gamma}(q,p)\nonumber\\
&\quad\leq \bigg( c - \frac{b}{16\tilde{k}}\abs*{q}^2 - \frac{\tilde{m}}{16}\abs*{p}^2\bigg)\gamma V_{\gamma}(q,p)\label{friclanglya}
\end{align}
for some constant $c>0$. The last assertion follows by straightforward applications of Young's inequality.
\end{proof}

For $U$ with locally Lipschitz third derivatives and by Theorems~\ref{alters2},~\ref{alters3}, the associated Poisson equation with suitable right-hand side $\hat{f} = f-\int_{\mathbb{R}^{2n}}fd\mu \in L^2(\mu)$ holds in the distributional sense as in the proof of Proposition~3.1 in (the arXiv version\footnote{The published version uses a different approach, which does not generalize to~\eqref{lvf} as easily.} 
of)~\cite{optfric} if in addition
\begin{equation}\label{ptgoz}
\abs*{\mathbb{E}[\hat{f}(z_t^\cdot)]} + \abs*{\int_t^{\infty}\mathbb{E} [\hat{f}(z_s^\cdot)]ds}\rightarrow 0 \textrm{ in }L^2(\mu) \textrm{ as } t\rightarrow\infty,
\end{equation}
where for any $z\in\mathbb{R}^{2n}$, $z_t^z = (q_t,p_t)$ solves~\eqref{lvf}, $\mathbb{P}((q_0,p_0) = z) = 1$ and~$\mu(dq,dp) = Z^{-1}e^{-U(q) - \frac{p^2}{2}}dqdp$ is the invariant probability measure with normalizing constant $Z$. We obtain~\eqref{ptgoz} in the following by using the ergodicity results of \cite{MR2499863}, see alternatively Theorem~2.4 in \cite{MR1807683}. 
The proof of Proposition~1.2 in \cite{MR1807683} can be modified for~\eqref{lvf} to obtain
\begin{prop}\label{modded}
For every~$z\in\mathbb{R}^{2n}$, $t>0$, the measure~$P^t(z,\cdot):\mathcal{B}(\mathbb{R}^{2n})\rightarrow [0,1]$ given by $P^t(z,A)=\mathbb{P}(z_t^z\in A)$ admits a density $p_t(z,\cdot)$ satisfying $p_t(z,z')>0$ for Lebesgue almost every $z'\in\mathbb{R}^{2n}$ and
\begin{equation}\label{conttol1}
(z\mapsto p_t(z,\cdot) )\in C(\mathbb{R}^{2n},L^1(\mathbb{R}^{2n})).
\end{equation}
\end{prop}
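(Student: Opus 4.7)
The plan is to follow the structure of the proof of Proposition~1.2 in~\cite{MR1807683}, which rests on three ingredients: H\"{o}rmander's theorem to produce a smooth density, the Stroock-Varadhan support theorem to produce positivity, and a soft argument (Scheff\'{e}'s lemma) to upgrade pointwise convergence of the density in the starting point $x$ to the $L^1$-convergence~\eqref{conttol1}. The only step that requires genuine work in the variable friction setting is the verification of H\"{o}rmander's bracket condition.

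To that end, write $L = X_0 + \tfrac{1}{2}\sum_{i=1}^n X_i^2$ with $X_0 = p\cdot\nabla_q + (\nabla_p\!\cdot\!\Gamma - \Gamma p - \nabla U)\cdot \nabla_p$ and $X_i = \sum_j \sqrt{\Gamma}_{ij}(q,p)\partial_{p_j}$ for $i=1,\dots,n$. Since $\Gamma$ is smooth and uniformly positive definite by Assumption~\ref{langa}, $\sqrt{\Gamma}$ is smooth and pointwise invertible, so $\{X_1,\dots,X_n\}$ already span $\{0\}\times \mathbb{R}^n$ at every point. A direct bracket computation gives $[X_0, X_i] = -\sum_{j}\sqrt{\Gamma}_{ij}(q,p)\partial_{q_j} + R_i$, where $R_i$ is a smooth linear combination of the $\partial_{p_k}$'s; invertibility of $\sqrt{\Gamma}$ then ensures that modulo $\mathrm{span}\{X_1,\dots,X_n\}$ the brackets $[X_0,X_i]$ span $\mathbb{R}^n \times\{0\}$. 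Hence $\{X_i,[X_0,X_j]\}_{i,j=1}^{n}$ spans $\mathbb{R}^{2n}$ pointwise and H\"{o}rmander's condition holds.

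Applying H\"{o}rmander's theorem to the backward Kolmogorov equation then yields that $(t,x,y)\mapsto p_t(x,y)$ is smooth on $(0,\infty)\times\mathbb{R}^{2n}\times\mathbb{R}^{2n}$, with any mismatch between the $C^3$ regularity of $U$ and the $C^\infty$ statement of H\"{o}rmander handled by the same mollification and localisation device used in~\cite{MR1807683}. For positivity, the Stroock-Varadhan support theorem reduces the question to reachability: for any $x,y\in\mathbb{R}^{2n}$ and any $t>0$ one can prescribe by hand a smooth path $p_\cdot$ joining $p_x$ to $p_y$ with the compatibility constraint $q_y = q_x + \int_0^t p_s\,ds$, and then the control realising this path along~\eqref{lvf} is uniquely determined through the $p$-equation since $\sqrt{\Gamma}$ is invertible. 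This gives $\mathrm{supp}\, P^t(x,\cdot) = \mathbb{R}^{2n}$, which combined with continuity of $p_t(x,\cdot)$ yields $p_t(x,y)>0$ for Lebesgue-a.e.\ $y$.

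Finally, for~\eqref{conttol1}, joint smoothness of $p_t$ in $(x,y)$ gives $p_t(x_n,y)\to p_t(x,y)$ pointwise in $y$ along any sequence $x_n\to x$; since $p_t(x_n,\cdot)$ and $p_t(x,\cdot)$ are both probability densities, Scheff\'{e}'s lemma upgrades pointwise convergence to $L^1$-convergence, which is exactly~\eqref{conttol1}. The main obstacle in executing this plan is really the bracket calculation together with a careful statement of H\"{o}rmander's theorem that accommodates unbounded coefficients and merely $C^3$ potential; everything else then follows from standard results.
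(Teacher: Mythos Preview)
Your proposal is correct and follows the same overall route as the paper: both defer to Proposition~1.2 in~\cite{MR1807683}. The difference lies in which step is flagged as needing modification. You single out the H\"ormander bracket verification with variable~$\sqrt{\Gamma}$ (your computation is correct and worth recording) and the controllability argument for the support theorem, whereas the paper treats those as following through unchanged and instead points to Lemma~1.1 of~\cite{MR1807683}, where the Hamiltonian-type Lyapunov function $\tilde H(x,y)=\tfrac12|y|^2+V(x)-\inf V+1$ used there must be replaced by~$V_\gamma$ from~\eqref{friclya}, since the extra divergence drift $\nabla_p\!\cdot\!\Gamma$ in~\eqref{lvf} spoils the original estimate. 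What you call ``the same mollification and localisation device used in~\cite{MR1807683}'' is precisely the place where this substitution is required, so your account is not wrong but glosses over the one point the paper chooses to make explicit.
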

\begin{proof}
For the Markov property, see the proof of Lemma~\ref{weakconvlya0} just before~\eqref{texpwp}. The proof in the aforementioned reference follows through except in the proof of Lemma~1.1 in \cite{MR1807683}, where the Lyapunov function~\eqref{friclya} is to be used in place of~$\tilde{H}(x,y) = \frac{1}{2}\abs*{y}^2+V(x) - \inf_{\mathbb{R}^n}V + 1$ and $R^2$ in the ensuing calculations is replaced as needed.
\end{proof}
Proposition~\ref{modded} implies the existence of an irreducible skeleton chain, namely, the existence of a sequence~$(P^{mk})_{k\in\mathbb{N}}$ for some $m>0$ satisfying that there exists a~$\sigma$-finite measure~$\mu$ on~$(\mathbb{R}^{2n},\mathcal{B}(\mathbb{R}^{2n}))$ for which if~$\mu(A)>0$, then for all~$z\in\mathbb{R}^n$, there exists~$k$ where~$P^{mk}(z,A) > 0$. 
Together with the following results in~\cite{MR2499863}: Theorem~3.2 (with~$\Psi = (\Psi_1,\Psi_2)$,~$\Psi_1(x) = \Psi_2(x) = (x/2)^{\frac{1}{2}}$,~$\phi(x) = x^{\frac{1}{2}}$, $V = V_{\frac{1}{2}\gamma^*}$), Theorem~3.4 (compact sets are petite by Theorem~4.1(i) in~\cite{MR1234294}, where in particular non-evanescence follows by Theorem~3.1 in~\cite{MR1234295} for which Theorem~7.4 in~\cite{MR958914} is enough to get a Borel right process) and Proposition~3.1 (with~$\phi(x) = x$,~$V = V_{\gamma^*}$), this yields~\eqref{ptgoz} for~$\hat{f}$ satisfying~$\hat{f}/V_{\frac{1}{8}\gamma^*} \in L^{\infty}$. 

\subsection{Stochastic Duffing-van der Pol equation}\label{stochdufflya}
We verify here that the Stochastic Duffing-van der Pol oscillator admits a Lyapunov function satisfying the assumptions of Theorem~\ref{seclem}. Note that in doing so, the difficult parts of Assumption~\ref{A4lya} are shown to be satisfied, so that our Theorem~\ref{weakconvlya} about weak numerical convergence rates applies. In particular, the logarithm of the Lyapunov function described below may be used for~$U$ in Assumption~\ref{A4lya}. The version of the equation considered is from \cite{MR3364862} with $\beta_2=0$, which is less general than in \cite{MR3364862} but still includes the setting of Section~13.1 in \cite{MR1214374} and \cite{MR1430980} for example. 
Specifically, for $(W^{(1)},W^{(3)}):[0,T]\times \Omega\rightarrow \mathbb{R}^2$ a standard $(\mathcal{F}_t)_{t\in [0,T]}$-adapted Brownian motion, $\alpha_1,\alpha_2,\beta_1,\beta_3\in\mathbb{R}$, $\alpha_3 > 0$, consider $\mathbb{R}^2$-valued solutions to
\begin{subequations}\label{sdvdp}
\begin{align}
dX_t^{(1)} &= X_t^{(2)}dt,\\
dX_t^{(2)} &= [\alpha_1 X_t^{(1)} - \alpha_2 X_t^{(2)} - \alpha_3 X_t^{(2)} (X_t^{(1)})^2 - (X_t^{(1)})^3]dt\\
&\quad  + \beta_1 X_t^{(1)} dW_t^{(1)} + \beta_3 dW_t^{(3)}.
\end{align}
\end{subequations}
Equation~\eqref{sdvdp} is~\eqref{sde0} with~$b(t,x) = (x_2,\alpha_1x_1-\alpha_2x_2 - \alpha_3x_2x_1^2 - x_1^3)$ and~$\sigma_{1,1}=\sigma_{1,2}=0$,~$(\sigma(t,x))_{2,1}=\beta_1x_1$,~$(\sigma(t,x))_{2,2}=\beta_3$. 
Let~$V:\mathbb{R}^2\rightarrow\mathbb{R}$ be given by
\begin{align*}
V(x_1,x_2) &= V_1(x_1,x_2)+V_2(x_1,x_2)\\
&:= (1-\eta(x_1))e^{\gamma(x_1^4 + ax_1x_2 + bx_2^2)} + e^{\gamma(-cx_1x_2 + \frac{1}{2}x_2^2)}.
\end{align*}
The following Proposition~\ref{vdpprop} verifies that~$V$ provides as~$V=e^U$ an appropriate Lyapunov function satisfying the assumptions of Theorem~\ref{weakconvlya}.
\begin{prop}\label{vdpprop}

There exists a constant $C^*>0$ such that $LV\leq C^*V$, where $L$ is the generator~\eqref{jen} associated with~\eqref{sdvdp}. Moreover, Assumptions~\ref{A1lya} and~\ref{A2lya} are satisfied with $G(t,x) = (3+2\sum_i \abs*{\alpha_i} + \beta_1^2)(1+\abs*{x_1}^3+\abs*{x_2}^{\frac{3}{2}})$ and $\hat{V}_k(t,x) = (\abs{X_t^{(1)}}^4 + 2\abs{X_t^{(2)}}^2+1)^k$ for~$t\geq 0$,~$x=(x_1,x_2)\in\mathbb{R}^n$, where~$(X_t^{(1)},X_t^{(2)})$ solves~\eqref{sdvdp} with~$(X_t^{(1)},X_t^{(2)}) = (x_1,x_2)$.
\end{prop}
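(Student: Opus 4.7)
The proof consists of two components: establishing the Lyapunov inequality $LV\leq C^*V$ and verifying Assumptions~\ref{A1lya} and~\ref{A2lya}. For the first, the plan is to apply the identity $L e^{\gamma f}/e^{\gamma f}=\gamma Lf+\gamma^2 a_{22}(\partial_{x_2}f)^2$, where $a_{22}=\tfrac12(\beta_1^2 x_1^2+\beta_3^2)$ is the only nontrivial entry of the diffusion matrix, separately to $\tilde V_1:=e^{\gamma f_1}$ with $f_1=x_1^4+ax_1x_2+bx_2^2$ and to $V_2=e^{\gamma f_2}$ with $f_2=-cx_1x_2+\tfrac12 x_2^2$. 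This reduces the task to controlling polynomials in $(x_1,x_2)$ of degree at most four, plus the cutoff correction produced by $\eta$.

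Expanding, the degree-four coefficients in $L\tilde V_1/\tilde V_1$ are: for $x_1^4$, $\gamma(-a+\tfrac12\gamma\beta_1^2 a^2)$, negative by $\gamma\beta_1^2 a\leq 1$; for $x_1^2 x_2^2$, $-2\gamma b(\alpha_3-\gamma\beta_1^2 b)$, negative by $\gamma\beta_1^2 b<\alpha_3$; for $x_1^3 x_2$, $\gamma(4-a\alpha_3-2b)+2\gamma^2\beta_1^2 ab$, at worst $O(\gamma\alpha_3)$ by the case-split choice $(a,b)=(1/\alpha_3,3/2)$ or $(1,2-\alpha_3)$, hence absorbable via $|x_1^3 x_2|\leq\tfrac12(x_1^4+x_1^2 x_2^2)$. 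The pure $x_2^2$ coefficient $\gamma[(a-2b\alpha_2+2\gamma\beta_3^2 b^2)-2b(\alpha_3-\gamma\beta_1^2 b)x_1^2]$ is precisely what the threshold $y_0^2$ in the definition of $\eta$ is designed to render $\leq-\gamma$ for $|x_1|>y_0$. I would then split $\mathbb{R}^2$ by $|x_1|$ into three regions: on $\{|x_1|\leq y_0\}$, $V_1\equiv 0$ and $V=V_2$, where the $x_2^2$ coefficient of $LV_2/V_2$ is $\gamma(-c-\alpha_2)+\gamma^2\beta_3^2/2\leq -4\gamma|\alpha_2|$ by $c=6|\alpha_2|$ and $\gamma\beta_3^2\leq|\alpha_2|/8$; on $\{|x_1|\geq y_1\}$, $V_1=\tilde V_1$ and the polynomial growth of $LV_2/V_2$ in $x_1$ is absorbed by the exponential decay $V_2/V_1\lesssim e^{-\gamma x_1^4}$; on the transition strip $y_0\leq|x_1|\leq y_1$, $LV_1=(1-\eta)L\tilde V_1-x_2\eta'(x_1)\tilde V_1$, and completing the square in $x_2$ within the combined quadratic $-\gamma(1-\eta)x_2^2-x_2\eta'(x_1)$ yields a bounded residual provided $\eta$ is chosen so that $(\eta'(x_1))^2/(1-\eta(x_1))$ is uniformly bounded on $\mathrm{supp}(\eta')$, which is achievable by quadratic vanishing at $|x_1|=y_0$.

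For Assumption~\ref{A1lya}, the Lipschitz bound on $b$ follows from the identities $x_2 x_1^2-y_2 y_1^2=x_2(x_1+y_1)(x_1-y_1)+(x_2-y_2)y_1^2$ and $x_1^3-y_1^3=(x_1-y_1)(x_1^2+x_1 y_1+y_1^2)$, together with Young's inequality $|x_2||x_1|\leq\tfrac{2}{3}|x_2|^{3/2}+\tfrac{1}{3}|x_1|^3$, giving the stated $G=C(1+|x_1|^3+|x_2|^{3/2})$; the bound for $\sigma$ is immediate since $\sigma$ is affine in $x_1$. The integrability~\eqref{a5} follows from $|x_1|^3\leq\epsilon x_1^4+C_\epsilon$ and $|x_2|^{3/2}\leq\epsilon x_2^2+C_\epsilon$ combined with $\log V$ dominating $\gamma(x_1^4+x_2^2)$ on the respective regions. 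For Assumption~\ref{A2lya}, direct differentiation gives $|\partial^\alpha b|\leq C(1+|x_1|+|x_2|)$ for $|\alpha|=2$ and $\partial^\alpha\sigma=0$ for $|\alpha|\geq 2$, both dominated by $(1+\hat V_k)^{1/k}$; that $\hat V_k=(1+x_1^4+2x_2^2)^k$ is a Lyapunov function follows from the key cancellation $4x_1^3 x_2-4x_1^3 x_2=0$ in $LG$ with $G=1+x_1^4+2x_2^2$, leaving $LG=-4\alpha_3 x_1^2 x_2^2+O(1+x_1^2+x_2^2)\leq CG$, together with the carr\'e du champ bound $\Gamma(G,G)=16 x_2^2 a_{22}\lesssim G^2$ yielding $L\hat V_k\leq C'\hat V_k$.

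The principal obstacle is the transition-region analysis: the residual $(\eta'(x_1))^2\tilde V_1/[4\gamma(1-\eta(x_1))]$ obtained after completing the square must still be reconciled with the Lyapunov inequality, which requires either exploiting the strongly negative quadratic $-c_8 x_2^2 V_2$ contribution from $LV_2$ to dominate $\tilde V_1$ at large $|x_2|$, or leveraging the specific quadratic vanishing of $\eta$ at $|x_1|=y_0$ to control the ratio $\tilde V_1/V$ uniformly; the remaining degree-four algebraic verification, while tedious, is routine bookkeeping.
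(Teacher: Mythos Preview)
Your overall plan coincides with the paper's: compute $LV_1$ and $LV_2$ as polynomial brackets times $\gamma V_i$ and split by the size of $|x_1|$. The paper simply asserts $LV_1\le CV_1$ on $\{|x_1|>y_0\}$ and disposes of $LV_2$ by elementary exponential comparison; your three-region decomposition makes the same scheme more explicit. The verification of Assumptions~\ref{A1lya} and~\ref{A2lya} also matches the paper's and is correct.

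You are right that the transition strip is the crux, but neither of your proposed remedies closes it. For the first, since $b>1>\tfrac12$ in both cases of the definition, one has $\tilde V_1/V_2\asymp e^{\gamma(b-1/2)x_2^2}$ at bounded $x_1$, so no polynomial multiple of $V_2$---in particular $x_2^2V_2$---can absorb a term of size $\tilde V_1$. For the second, genuine quadratic vanishing of $1-\eta$ at $y_0$ forces a jump in $\eta''$, contradicting $\eta\in C_c^\infty$; and even relaxing to $\eta\in C^1$ so that $(\eta')^2/(1-\eta)$ is bounded and the residual after completing the square is $\lesssim\tilde V_1$, one still needs $\tilde V_1\le C'V=C'[(1-\eta)\tilde V_1+V_2]$, which fails near $\partial\{\eta=1\}$ at large $|x_2|$ for exactly the same reason. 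More directly: with $R:=|\eta'|/(1-\eta)$, evaluating at $x_2\approx R/(2\gamma)$ yields $LV/V\gtrsim R^2/\gamma$, and for \emph{any} $C^1$ cutoff equal to $1$ on an interval, $R$ is necessarily unbounded near the interval's edge because $\int R\,dx_1=-\log(1-\eta)\to\infty$ there. This is precisely the step the paper's own proof passes over with its bare claim ``$LV_1\le CV_1$''; your instinct to isolate it is sound, but the remedies you list do not work, and a fully rigorous argument requires a different construction of $V$ (or of the cutoff) on the transition strip.
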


\begin{proof}
The functions $V_1$ and $V_2$ satisfy
\begin{align*}
LV_1(x_1,x_2) &= \bigg[ (2\alpha_1b - \alpha_2a)x_1x_2 + (a-2\alpha_2b + 2\beta_3^2\gamma b^2)x_2^2 + (\alpha_1 a + \frac{1}{2} \beta_3^2\gamma a^2 \\
&\quad + \beta_1^2b) x_1^2+ (2\beta_1^2\gamma b^2 - 2\alpha_3b)x_1^2x_2^2  - (\alpha_3 a + 2b - 4) x_1^3x_2 \\
&\quad  + (\frac{1}{2} \beta_1^2 \gamma a^2- a)x_1^4 + b\beta_3^2 - \frac{x_2\partial_{x_1}\eta(x_1)}{1-\eta(x_1)} \bigg]\gamma V_1(x_1,x_2)\\
LV_2(x_1,x_2) &= \bigg[\bigg(\frac{1}{2}\beta_3^2\gamma - c - \alpha_2\bigg)x_2^2 + \bigg(\frac{1}{2}c^2\beta_3^2\gamma - \alpha_1 c + \frac{1}{2}\beta_1^2\bigg) x_1^2\\
&\quad + (\alpha_2 c + \alpha_1) x_1x_2 + (\alpha_3 c - 1) x_1^3 x_2 + \bigg(c + \frac{1}{2}c^2 \gamma \beta_1^2\bigg) x_1^4\\
&\quad +  \bigg(\frac{1}{2}\beta_1^2\gamma- \alpha_3\bigg) x_1^2x_2^2+ \frac{1}{2}\beta_3^2\bigg]\gamma V_2(x_1,x_2).
\end{align*}
where $\frac{1}{1-\eta(x_1)}:=0$ whenever $1-\eta(x_1) = 0$. 
In order to see~$LV\leq CV$, consider separately the set where $x_1^2\leq\frac{1+\abs*{a - 2\alpha_2 b + 2\beta_3^2\gamma b^2}}{ 2\alpha_3b - 2\beta_1^2\gamma b^2}$ and its complement in $\mathbb{R}^2$. In the former case, $V_1(x_1,x_2) = LV_1(x_1,x_2) = 0$ and by our choice of $c$ and $\gamma$, there exists a generic constant $C>0$ such that $LV_2\leq C V_2$, therefore $LV\leq CV$. Otherwise in the complementary case where $\abs*{x_1}$ is bounded below, we have $LV_1 \leq C V_1$ and when in addition $x_1\in \textrm{supp}\eta \cup B_1(0)$, it holds that $LV_2\leq CV_2$. It remains to estimate $LV_2$ when $x_1\notin \textrm{supp}\eta\cup B_1(0)$, in which case we have $\abs*{x}^i e^{\gamma(-cx_1x_2 + \frac{1}{2}x_2^2)}\leq C e^{\gamma(\frac{1}{2}x_1^4 + \frac{3}{4}x_2^2)} \leq CV_1(x_1,x_2)$ for $i\leq 4$, from which $LV_2\leq CV_1$. \\
For the second assertion, it is straightforward to see that~\eqref{a3},~\eqref{a4} hold and that the higher derivatives of the coefficients of~\eqref{sdvdp} are bounded above in terms of $\hat{V}_k$ for any $k,p$ as called-for in Assumption~\ref{A2lya}. For~\eqref{a52}, consider separately the cases $\abs*{x_1}\leq \sup\{\abs*{x}:x\in\textrm{supp}\eta\}$ and otherwise. In the former case, it holds that
\begin{equation*}
G(x_1,x_2) \leq C(1+\abs*{x_2}^{\frac{3}{2}}),
\end{equation*}
which yields that for any $m>0$, there is $M = M(m)>0$ continuous in $m$ such that
\begin{equation}\label{gabove}
G \leq m\log(V_2) + M\leq m\log(V) + M.
\end{equation}
When~$\abs*{x_1} > \sup\{\abs*{x}:x\in\textrm{supp}\eta\}$, inequalities~\eqref{gabove} continue to hold with $V_1$ replacing $V_2$ and a corresponding continuous function $m\mapsto M(m)$. 
\end{proof}

\begin{remark}
Alternative to~$V$ above, it is also possible to take the Lyapunov function given by~\cite[Section~3.1.4]{MR4079431}. For example, let~$\alpha>0$,~$U_0,U_1$ be given as in~\cite[Section~3.1.4]{MR4079431} and for~$x\in\mathbb{R}^3$, let~$Y_{\cdot}^x:\Omega\times[0,T]\rightarrow\mathbb{R}^3$ be given by~$Y_t^x = (X_t^{(1)},X_t^{(2)},X_t^{(3)})$, where~$(X_t^{(1)},X_t^{(2)})$ is given by~\eqref{sdvdp} and~$X_t^{(3)}$ satisfies~$dX_t^{(3)} = U_1(X_t^{(1)},X_t^{(2)})e^{-\alpha t}dt$ with~$Y_0^x=x$~$\mathbb{P}$-almost surely. The derivation in~\cite[inequality~(57)]{MR4079431} implies that there exists~$\alpha$ such that the function~$V:\Omega\times[0,T]\times\mathbb{R}^3\rightarrow(0,\infty)$ given by~$V(t,x) = \exp(U_0(t,(Y_t^x)_1,(Y_t^x)_2)e^{-\alpha t} + (Y_t^x)_3)$ is a Lyapunov function. 
\end{remark}

\begin{appendix}
\section{Auxiliary results}
Just as in the case of globally Lipschitz coefficients in~\cite[Lemma~5.10]{MR1731794}, the regularity of an extended system and the harmonic property of the expectation~\eqref{actualetstw} are required. These properties are established for our setting in the following. 

Throughout the section, we assume~$O = \mathbb{R}^n$ and~$b,\sigma,f,c,g$ are nonrandom functions. Moreover, we suppose all of the assumptions in Theorem~\ref{fk2} hold (including those in the last statement~\ref{tstwiii}). In particular,~$f:[0,\infty)\times \mathbb{R}^n\rightarrow\mathbb{R}$,~$c:[0,\infty)\times \mathbb{R}^n\rightarrow[0,\infty)$ and $g:\mathbb{R}^n\rightarrow\mathbb{R}$ are Borel functions satisfying that~$f(t,\cdot),c(t,\cdot),g(\cdot)$ are continuous for every~$t\in[0,T]$,~$\int_0^T \sup_{x\in B_R}(\abs{c(t,x)} + \abs{f(t,x)})dt <\infty$ for every~$R>0$ and such that for~$h\in\{f,c,g\}$,~$R>0$, there exists $C \geq 0$,~$0<\bar{l}\leq 1$, Lyapunov functions~$V^{s,T}$, locally bounded~$\tilde{x}$ for which for any~$s\in[0,T]$ it holds~$\mathbb{P}$-a.s. that 
\begin{subequations}\label{xxgg0}
\begin{align}
\abs*{h(s+t,X_t^{s,x})}&\leq C(1+V^{s,T}(t,\tilde{x}(x)))^{\bar{l}},\\
\abs*{h(s+t,y)-h(s+t,y')} &\leq C\abs*{y-y'}\label{xxgg02}\\
\textrm{and if~$h\in\{f,g\}$,}\quad V^{s+\tau,T}(0,\tilde{x}(X_{\tau}^{s,x}))^{\bar{l}} &\leq C(1+V^{s,T}(\tau,\tilde{x}(x)))\label{xxgg03}
\end{align}
\end{subequations}
for all~$t\leq T-s$, 
stopping times~$\tau \leq T$,~$x\in\mathbb{R}^n$ and~$y,y'\in B_R$. 

For any~$s\geq 0$,~$T>0$,~$x\in\mathbb{R}^n$,~$x',x''\in\mathbb{R}$, consider solutions $X_t^{s,x}$ to~\eqref{sde} appended with the corresponding $\mathbb{R}$-valued solutions~$X_t^{(n+1),s,x'}$ and~$X_t^{(n+2),s,x'}$ to
\begin{subequations}\label{addingonapp}
\begin{align}
X_t^{(n+1),s,x'} &= x' + \int_0^t c(s+r,X_r^{s,x})dr,\\
X_t^{(n+2),s,x''} &= x'' + \int_0^t f(s+r,X_r^{s,x})e^{-X_r^{(n+1),s,x'}}dr
\end{align}
\end{subequations}
on~$[0,T]$, denoted $\bar{X}_t^{s,y} = (X_t^{s,x},X_t^{(n+1),s,x'},X_t^{(n+2),s,x''})$, $y = (x,x',x'')$. 
Let $\bar{X}_t^{s,y}(I)$ be the corresponding Euler approximation analogous to~\eqref{euap} with~$I$ as in the beginning of Lemma~\ref{eulercan}.
\begin{lemma}\label{regsde}
Under the assumptions of this section, for every~$R,T>0$, it holds that
\begin{equation*}
\sup_{s\in[0,T]}\sup_{y\in B_R} \mathbb{P}\bigg( \sup_{t\in[0,T]}\abs*{\bar{X}_t^{s,y} - \bar{X}_t^{s,y}(I)} > \epsilon \bigg) \rightarrow 0
\end{equation*}
as $\sup_k t_{k+1} - t_k \rightarrow 0$.
\end{lemma}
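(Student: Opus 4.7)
My plan is to handle the three coordinate blocks of $\bar X^{s,y}$ in sequence, using that the last two coordinates are pathwise time integrals driven by the first block, so no new stochastic integral estimates beyond those already in Lemma~\ref{eulercan} are required. The first $n$ coordinates are covered by Lemma~\ref{eulercan} itself; accordingly, for any $\delta>0$, the event $E_\delta=\{\sup_{t\in[0,T]}\abs*{X_t^{s,x} - X_t^{s,x}(I)}\leq\delta\}$ has probability tending to $1$ as $\sup_k(t_{k+1}-t_k)\to 0$, uniformly in $s\in[0,T]$ and $x\in B_R$.

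For the $(n+1)$-th coordinate, writing $\eta(r)=t_k$ for $r\in[t_k,t_{k+1})$, the Euler recursion gives
\[ X_t^{(n+1),s,x'}(I)-X_t^{(n+1),s,x'}=\int_0^t\bigl(c(s+r,X_{\eta(r)}^{s,x}(I))-c(s+r,X_r^{s,x})\bigr)dr. \]
Fixing $R'>R+1$, I would introduce the tightness event $F_{R'}=\{\sup_{t\in[0,T]}\abs*{X_t^{s,x}}\leq R'-1\}$ and work on $E_\delta\cap F_{R'}$ with $\delta<1$, on which both $X_r^{s,x}$ and $X_{\eta(r)}^{s,x}(I)$ lie in $B_{R'}$. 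The local Lipschitz estimate~\eqref{xxgg02} (applied with radius $R'$) then dominates the integrand by $C_{R'}\abs*{X_{\eta(r)}^{s,x}(I)-X_r^{s,x}}$, which splits into a term of size $\delta$ and a term controlled by the modulus of continuity of $X_\cdot^{s,x}$ at scale $\sup_k(t_{k+1}-t_k)$; both are uniformly small in $s\in[0,T]$ and $x\in B_R$. The tail $\mathbb{P}(F_{R'}^c)$ I would estimate by Markov's inequality: with $\tau=\inf\{t:\abs*{X_t^{s,x}}\geq R'-1\}\wedge T$ and using Assumption~\ref{A3lya}\ref{Ginf},
\[ \mathbb{P}(F_{R'}^c)\inf_{t\in[s,s+T],\abs*{y}\geq R'-1}\hat G(t,y)\leq \mathbb{E}\hat G(s+\tau,X_\tau^{s,x})\leq \mathbb{E}V^{s,T}(\tau,\tilde{x}(x)), \]
where the right-hand side is bounded uniformly in $s\in[0,T]$, $x\in B_R$ by Theorem~2.4 in~\cite{MR4260476} together with the local-in-$s$ property of $V^{s,T}$, while the infimum on the left tends to $\infty$ as $R'\to\infty$ by Assumption~\ref{A3lya}\ref{Ginf}.

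The $(n+2)$-th coordinate follows the same template after splitting the integrand as
\[ (f(s+r,X_{\eta(r)}^{s,x}(I))-f(s+r,X_r^{s,x}))e^{-X_{\eta(r)}^{(n+1),s,x'}(I)}+f(s+r,X_r^{s,x})(e^{-X_{\eta(r)}^{(n+1),s,x'}(I)}-e^{-X_r^{(n+1),s,x'}}). \]
Since $c\geq 0$ and $x'\geq -R$, both $(n+1)$-th components are bounded below by $-R$, so the exponential factors are bounded above by $e^R$ and $z\mapsto e^{-z}$ is Lipschitz on their range; together with local Lipschitz continuity of $f$ and its boundedness on $B_{R'}$, this reduces the integrand on the good event to a constant multiple of $\abs*{X_{\eta(r)}^{s,x}(I)-X_r^{s,x}}+\abs*{X_{\eta(r)}^{(n+1),s,x'}(I)-X_r^{(n+1),s,x'}}$, both of which have already been controlled. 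The main obstacle, beyond bookkeeping, is making every tail estimate uniform in $s\in[0,T]$ and $y\in B_R$; this is exactly what the local-in-$s$ structure of Assumption~\ref{A3lya} provides, keeping $\mathbb{E}V^{s,T}(\cdot,\tilde{x}(x))$ locally bounded in $s$ and so rendering the final union bound uniform in $s\in[0,T]$, $y\in B_R$.
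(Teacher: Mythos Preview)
Your proposal is correct and follows essentially the same route as the paper's proof: reduce to Lemma~\ref{eulercan} for the first $n$ coordinates, then control the two appended integral coordinates pathwise on a tightness event using the local Lipschitz bounds~\eqref{xxgg02}, with the uniformity in $(s,y)$ coming from the Lyapunov/local-in-$s$ structure. The only cosmetic differences are that the paper localises both $X^{s,x}$ and $X^{s,x}(I)$ directly (citing Lemma~2.2 in~\cite{MR1731794}) rather than inferring boundedness of the latter from $E_\delta\cap F_{R'}$, and it handles the time-increment term via $|X_t^{s,x}(I)-X_{I_t}^{s,x}(I)|$ instead of the modulus of continuity of the true solution; neither changes the substance of the argument.
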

\begin{proof}
For any~$R'>0$, let~$R_X^{s,x}(I,R')\in\mathcal{F}$ denote the event
\begin{equation*}
R_X^{s,x}(I,R') = \bigg\{ \sup_{t\in[0,T]} \abs*{X_t^{s,x}}\leq R' \bigg\} \cap \bigg\{ \sup_{t\in[0,T]} \abs*{X_t^{s,x}(I)}\leq R' \bigg\}.
\end{equation*}
For any $\epsilon, R'>0$, it holds that
\begin{align*}
&\mathbb{P}\bigg(\sup_{t\in[0,T]}\abs*{\bar{X}_t^{s,y} - \bar{X}_t^{s,y}(I) } > \epsilon\bigg)\\ &\quad\leq \mathbb{P}\bigg(\sup_{t\in[0,T]}\abs*{X_t^{s,x}} > R'\bigg) + \mathbb{P}\bigg(\sup_{t\in[0,T]}\abs*{X_t^{s,x}(I)} > R'\bigg)\\
&\qquad + \mathbb{P}\bigg(\sup_{t\in [0,T]}\abs*{\bar{X}_t^{s,y} - \bar{X}_t^{s,y}(I) } > \epsilon\  \bigg\vert R_X^{s,x}(I,R') \bigg).
\end{align*}
Fix~$\epsilon'>0$. For any~$T,R>0$, we may choose~$R' = R^*$ so that, by Lemma~2.2 in~\cite{MR1731794}, the sum of the first and second term on the right-hand side is bounded above by~$\epsilon'/2$ uniformly in~$s\in[0,T]$ and~$x\in B_R$. For the last term on the right, note that by our assumptions on $c$, there exists locally bounded $\tilde{G}:\mathbb{R}^n\rightarrow[0,\infty)$ such that
\begin{align}
&\sup_{t\in[0,T]}\abs*{c(s+t,X_t^{s,x}) - c(s+t,X_t^{s,x}(I))}\nonumber\\
&\quad\leq \sup_{t\in[0,T]}\abs*{X_t^{s,x} - X_t^{s,x}(I)}(\tilde{G}(X_t^{s,x}) + \tilde{G}(X_t^{s,x}(I)))\label{xxgg}
\end{align}
and such that for~$I_r := \max\{t_k:r\geq t_k\}$,
\begin{align}
&\mathbb{E}\bigg[\sup_{t\in [0,T]}\bigg\vert \int_0^t c(s+r,X_r^{s,x}(I)) - c(s+r,X_{I_r}^{s,x}(I)) dr\bigg\vert \mathds{1}_{R_X^{s,x}(I,R')} \bigg] \nonumber\\
&\quad \leq \mathbb{E}\bigg[\sup_{t\in [0,T]}\bigg\vert\int_0^t (X_r^{s,x}(I) - X_{I_r}^{s,x}(I)) (\tilde{G}(X_r^{s,x}(I)) + \tilde{G}(X_{I_r}^{s,x}(I))) dr \bigg\vert \mathds{1}_{R_X^{s,x}(I,R')} \bigg] \nonumber\\
&\quad =  \mathbb{E}\bigg[\sup_{t\in [0,T]} \bigg\vert\int_0^t \bigg(\int_{I_r}^r b(s+r',X_{I_{r'}}^{s,x}(I))dr' + \int_{I_r}^r \sigma(s+r', X_{I_{r'}}^{s,x}(I)) dW_{r'} \bigg)\nonumber\\
&\qquad\cdot(\tilde{G}(X_r^{s,x}(I)) + \tilde{G}(X_{I_r}^{s,x}(I)) dr \bigg\vert \mathds{1}_{R_X^{s,x}(I,R')} \bigg] \label{xxgg2}
\end{align}
for all~$s\in[0,T]$,~$y=(x,x',x'')\in\mathbb{R}^{n+2}$, where we have used~\eqref{xxgg02}. 
By~\eqref{xxgg}, it holds that
\begin{align*}
&\bigg\{\sup_{t\in[0,T]} \abs*{X_t^{s,x} - X_t^{s,x}(I)} \leq \frac{\epsilon}{12\sqrt{3}T\sup_{z\in B_{R^*}}\tilde{G}(z)} \bigg\} \cap R_X^{s,x}(I,R^*)\\
&\quad\subset \bigg\{\sup_{t\in[0,T]}\abs*{c(s+t,X_t^{s,x}) - c(s+t,X_t^{s,x}(I))}\leq \frac{\epsilon}{6\sqrt{3}T}\bigg\} \cap R_X^{s,x}(I,R^*)\\
&\quad\subset \bigg\{ \int_0^T \abs*{c(s+u,X_u^{s,x}) - c(s+u,X_u^{s,x}(I))}du \leq \frac{\epsilon}{6\sqrt{3}} \bigg\} \cap R_X^{s,x}(I,R^*).
\end{align*}
This yields
\begin{align*}
&\mathbb{P}\bigg( \bigg\{\sup_{t\in[0,T]} \abs*{X_t^{(n+1),s,x'} - X_t^{(n+1),s,x'}(I)} > \frac{\epsilon}{3\sqrt{3}}\bigg\} \cap  R_X^{s,x}(I,R^*) \bigg)\nonumber\\
&\quad \leq \mathbb{P}\bigg( \bigg\{\sup_{t\in[0,T]} \abs*{ \int_0^t (c(s+r,X_r^{s,x}) - c(s+r,X_r^{s,x}(I)) ) dr } > \frac{\epsilon}{6\sqrt{3}}\bigg\} \nonumber\\
&\qquad \cap  R_X^{s,x}(I,R^*) \bigg) + \mathbb{P}\bigg( \bigg\{\sup_{t\in[0,T]} \abs*{ \int_0^t (c(s+r,X_r^{s,x}(I)) + c(s+r,X_{I_r}^{s,x}(I)) ) dr } \nonumber\\
&\qquad > \frac{\epsilon}{6\sqrt{3}}\bigg\} \cap  R_X^{s,x}(I,R^*) \bigg)\nonumber\\
&\quad \leq \mathbb{P} \bigg( \bigg\{\sup_{t\in[0,T]} \abs*{X_t^{s,x} - X_t^{s,x}(I)} > \frac{\epsilon}{12\sqrt{3}T\sup_{z\in B_{R^*}}\tilde{G}(z)} \bigg\} \cap R_X^{s,x}(I,R^*) \bigg)\nonumber\\
&\qquad + \frac{6\sqrt{3}}{\epsilon}\mathbb{E} \bigg[ \sup_{t\in[0,T]} \bigg\vert \int_0^t c(s+r,X_r^{s,x}(I)) - c(s+r,X_{I_r}^{s,x}(I)) dr\bigg\vert \mathds{1}_{ R_X^{s,x}(I,R^*)} \bigg],
\end{align*}
which converges to zero as~$\sup_k t_{k+1} - t_k \rightarrow 0$ by~\eqref{xxgg2} and Lemma~\ref{eulercan}. In particular, 
there exists~$0<\delta\leq \delta^*$ such that for~$I$ satisfying~$\sup_{k\geq 0} t_{k+1} - t_k \leq \delta$, it holds that
\begin{align}
&\mathbb{P}\bigg( \sup_{t\in[0,T]} \abs*{X_t^{(n+1),s,x'} - X_t^{(n+1),s,x'}(I)} > \frac{\epsilon}{3\sqrt{3}} \bigg\vert R_X^{s,x}(I,R^*) \bigg)\nonumber\\
&\quad \leq \mathbb{P}\bigg(\sup_{t\in [0,T]}\abs*{X_t^{s,x} - X_t^{s,x}(I)} > \frac{\epsilon}{12\sqrt{3}T\sup_{z\in B_{R^*}}\tilde{G}(z)}\bigg) \leq \frac{\epsilon'}{6}\label{xxgg3}
\end{align}
for all~$s\in[0,T]$ and~$y = (x,x',x'')\in B_R \subset \mathbb{R}^{n+2}$. By a similar argument and using the above,~\eqref{xxgg3} holds with~$n+1$ replaced by~$n+2$ and~$x'$ by~$x''$. Together with Lemma~\ref{eulercan}, the lemma is proved.
\end{proof}
Next, the harmonic property (see~\cite[Definition~3.1]{MR1731794}) of~\eqref{actualetstw} is shown. Let~$\bar{g}$ given by~$\bar{g}(y) = x'' + g(x) e^{-x'}$ for all~$y = (x,x',x'')\in\mathbb{R}^{n+2}$ and for~$T>0$,~$s\in[0,T]$, let~$\bar{v}:[0,\infty)\times\mathbb{R}^{n+2}\rightarrow\mathbb{R}$ be given by
\begin{equation}\label{altue}
\bar{v}(s,y) = \mathbb{E}[\bar{g}(\bar{X}_{T-s}^{s,y})]= \mathbb{E}\Big[X_{T-s}^{(n+2),s,x''} + g(X_{T-s}^{s,x})e^{-X_{T-s}^{(n+1),s,x'}}\Big].
\end{equation}
In addition for a bounded subset~$Q\subset (0,T)\times \mathbb{R}^{n+2}$, let~$\tau$ be the stopping time
\begin{equation}\label{vbarst}
\tau := \inf\{u\geq 0:(s+u,\bar{X}_u^{s,y})\notin Q\}.
\end{equation}
The next lemma establishes the equality~$\bar{v}(s,y) = \mathbb{E}[\bar{v}(s+(\tau\wedge t),\bar{X}_{\tau\wedge t}^{s,y})]$ under our setting.
\begin{lemma}\label{tstwalem}
Under the assumptions of this section, for any~$T>0$, any bounded subset~$Q\subset (0,T)\times\mathbb{R}^{n+2}$,~$(s,y)\in Q$,~$t\in[0,T-s]$, it holds that
\begin{equation*}
\mathbb{E}[\bar{g}(\bar{X}_{T-s}^{s,y})] = \int \int \bar{g}\bigg(\bar{X}_{T-s-(\tau(\omega)\wedge t)}^{s+(\tau(\omega)\wedge t),\bar{X}_{\tau(\omega)\wedge t}^{s,y} (\omega)}(\omega')\bigg) d\mathbb{P}(\omega')d\mathbb{P}(\omega),
\end{equation*}
where~$\tau$ is defined by~\eqref{vbarst}.
\end{lemma}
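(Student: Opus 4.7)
The right-hand side equals, by Fubini's theorem, $\mathbb{E}[\bar{v}(s+(\tau\wedge t),\bar{X}_{\tau\wedge t}^{s,y})]$ with $\bar{v}$ defined in~\eqref{altue}. Consequently, the lemma is a harmonic-function identity for $\bar{v}$ and may be proved exactly as in \cite[Theorem~3.6]{MR1731794}, which only requires (a) unique strong solvability of the relevant SDE, (b) regularity in the sense of \cite[Definition~2.1]{MR1731794}, and (c) enough integrability for the optional sampling step.

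The plan is therefore: first, recast~\eqref{sde},~\eqref{addingon} as a time-homogeneous SDE on $\mathbb{R}^{n+3}$ by appending the time coordinate $X_t^{(0),s} = s+t$, so that $(X^{(0)},X,X^{(n+1)},X^{(n+2)})$ is driven by autonomous continuous coefficients on $\mathbb{R}^{n+3}$. Pathwise uniqueness and strong existence for this extended system follow from Assumption~\ref{A3lya} together with the Lipschitz-in-the-last-two-coordinates property (the added coordinates are given by integrals depending only on $(s+r,X_r^{s,x})$), and boundedness of the added coordinates on $[0,T]$ by the bounds~\eqref{xxgg0} and Theorem~2.4 in \cite{MR4260476}.

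Second, I would verify regularity on the extended state space: Lemma~\ref{regsde} gives precisely that the Euler approximation of the extended system converges in probability, uniformly over $s\in[0,T]$ and $y\in B_R$, to the SDE solution, which together with the local Lipschitz/monotonicity properties verifies the hypotheses of \cite[Theorem~5.3]{MR1731794} and hence \cite[Definition~2.1]{MR1731794}. With regularity in hand, \cite[Theorem~3.6]{MR1731794} applies: for every bounded Borel $Q\subset(0,T)\times\mathbb{R}^{n+2}$, the optional-sampling identity
\begin{equation*}
\bar{v}(s,y) = \mathbb{E}\bigl[\bar{v}(s+(\tau\wedge t),\bar{X}_{\tau\wedge t}^{s,y})\bigr]
\end{equation*}
holds, provided $|\bar{g}(\bar{X}_{T-s}^{s,y})|$ and $|\bar{v}(s+u,\bar{X}_u^{s,y})|$ are uniformly integrable in $u\in[0,T-s]$. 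This last integrability is straightforward from $|\bar{g}(y)|\leq |x''|+|g(x)|e^{-x'}$, the bound~\eqref{xxgg0} on $g$, boundedness of $X^{(n+1)}$ on $Q$, and Theorem~2.4 in \cite{MR4260476} applied to $V^{s,T}$.

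The main obstacle will be the strong Markov bookkeeping: one must carefully identify the post-$\tau\wedge t$ trajectory of $\bar{X}_\cdot^{s,y}$, evaluated at total elapsed time $T-s$, with the trajectory of a fresh copy $\bar{X}_\cdot^{s+(\tau\wedge t),\bar{X}_{\tau\wedge t}^{s,y}(\omega)}$ evaluated at the remaining time $T-s-(\tau(\omega)\wedge t)$, where the initial condition is an $\mathcal{F}_{\tau\wedge t}$-measurable random variable. This step requires that the map $(s',y')\mapsto \bar{X}_\cdot^{s',y'}$ admit a measurable modification jointly in $(\omega,s',y')$ that coincides with the unique solution for each fixed $(s',y')$; this modification is produced in the proof of \cite[Theorem~5.3]{MR1731794} from the regularity established via Lemma~\ref{regsde}. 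Once this measurable selection is fixed, the Fubini splitting in the statement of the lemma is literally the disintegration of $\mathbb{E}[\bar{v}(s+(\tau\wedge t),\bar{X}_{\tau\wedge t}^{s,y})]$ using the independence of the post-$\tau\wedge t$ Brownian increments from $\mathcal{F}_{\tau\wedge t}$, and the identity $\bar{v}(s,y)=\mathbb{E}[\bar{g}(\bar{X}_{T-s}^{s,y})]$ closes the argument.
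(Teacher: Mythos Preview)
Your approach is essentially the paper's: regularity of the extended system (Lemma~\ref{regsde}) feeds the strong Markov property, from which the harmonic identity follows after an integrability argument using the Lyapunov bounds~\eqref{xxgg0}. Two points of comparison are worth flagging.

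First, your citation of \cite[Theorem~3.6]{MR1731794} appears to be off. In the paper, Theorem~3.6 is invoked \emph{downstream} of Lemma~\ref{tstwalem} (in the proof of Theorem~\ref{fk}) to pass from the harmonic property plus $C^{1,2}$ regularity to the PDE; it does not itself furnish the harmonic identity. The correct reference for what you are doing is \cite[Theorem~2.13]{MR1731794} (the strong Markov property for regular SDEs), which is exactly what the paper cites at the outset of its proof.

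Second, the paper handles the unbounded terminal function $\bar g$ by explicit truncation: it applies Theorem~2.13 to $\mathds{1}_{B_R}\bar g$, then removes the indicator by dominated convergence. The dominating bound on the inner integral is the nontrivial step, and it hinges on the consistency inequality~\eqref{xxgg03}, which lets one control $V^{s+(\tau\wedge t),T}(0,\tilde x(X_{\tau\wedge t}^{s,x}))^{\bar l}$ by $1+V^{s,T}(\tau\wedge t,\tilde x(x))$ before a final application of Theorem~2.4 in~\cite{MR4260476}. Your uniform-integrability sketch is morally the same argument, but you should make the role of~\eqref{xxgg03} explicit: merely citing ``Theorem~2.4 applied to $V^{s,T}$'' is not enough, since the process restarted at the random time $s+(\tau\wedge t)$ is governed by $V^{s+(\tau\wedge t),T}$, and it is precisely~\eqref{xxgg03} that ties the two Lyapunov families together.
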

\begin{proof}
For any~$R,T>0$,~$t\in[0,T]$,~$(s,y)\in Q$ with~$y = (x,x',x'')$, by Theorem~2.13 in~\cite{MR1731794} together with Lemma~\ref{regsde}, it holds for~$\mathbb{P}$-a.a.~$\omega$ that
\begin{equation}\label{scuff}
\mathbb{E}[(\mathds{1}_{B_R}\bar{g})(\bar{X}_{T-s}^{s,y}) \vert \mathcal{F}_{\tau\wedge t} ] = \int (\mathds{1}_{B_R}\bar{g})\bigg(\bar{X}_{T-(s+(\tau(\omega)\wedge t))}^{s+(\tau(\omega)\wedge t), \bar{X}_{\tau(\omega)\wedge t}^{s,y}(\omega)}(\omega')\bigg)d\mathbb{P}(\omega'),
\end{equation}
so that the right-hand side is~$\mathcal{F}_{\tau\wedge t}$-measurable. Moreover for~$\mathbb{P}$-a.a.~$\omega$, by~\eqref{xxgg0}, the absolute value of the integrand in the right-hand side is bounded independently of~$R$ as
\begin{align}
&(\mathds{1}_{B_R}\abs*{\bar{g}})\bigg(\bar{X}_{T-(s+(\tau(\omega)\wedge t))}^{s+(\tau(\omega)\wedge t), \bar{X}_{\tau(\omega)\wedge t}^{s,y}(\omega)}(\omega')\bigg) - \abs{X_{\tau(\omega) \wedge t}^{(n+2),s,x''}(\omega)} \nonumber\\
&\quad\leq  \int_0^{T-s-(\tau(\omega)\wedge t)}\bigg\vert f\bigg(s+(\tau(\omega)\wedge t)+r, X_r^{s+(\tau(\omega)\wedge t), X_{\tau(\omega) \wedge t}^{s,x}(\omega)}(\omega')\bigg)\bigg\vert dr \nonumber\\
&\qquad + \bigg\vert g\bigg(X_{T-(s+(\tau(\omega)\wedge t))}^{s+(\tau(\omega)\wedge t), X_{\tau(\omega)\wedge t}^{s,x}(\omega)}(\omega')\bigg)\bigg\vert\nonumber\\
&\quad \leq  C\bigg(\int_0^{T-s-(\tau(\omega)\wedge t)}\Big(1+V^{s+(\tau(\omega)\wedge t),T}(\omega',r,\tilde{x}(X_{\tau(\omega)\wedge t}^{s,x}(\omega)))\Big)^{\bar{l}} dr\nonumber\\
&\qquad + \Big(1+V^{s+(\tau(\omega)\wedge t),T} (\omega',T-s-(\tau(\omega)\wedge t), \tilde{x}(X_{\tau(\omega)\wedge t}^{s,x}(\omega)))\Big)^{\bar{l}}\bigg),\label{scuff1}
\end{align}
where we have abused the notation~$V^{\cdot,\cdot}$ to refer to Lyapunov functions for both~$f$ and~$g$. 
Since~$\bar{l}^{\textrm{th}}$-powers of Lyapunov functions are still Lyapunov functions (but with different auxiliary processes), the expectation in~$\omega'$ of the right-hand side of this is bounded by 
Corollary~\ref{huddecor0} 
and
~\eqref{xxgg03} as in
\begin{align}
&\int \bigg( \int_0^{T-s-(\tau(\omega)\wedge t)}\Big(1+V^{s+(\tau(\omega)\wedge t),T}(\omega',r,\tilde{x}(X_{\tau(\omega)\wedge t}^{s,x}(\omega)))\Big)^{\bar{l}} dr\nonumber\\
&\quad + \Big(1+V^{s+(\tau(\omega)\wedge t),T} (\omega',T-s-(\tau(\omega)\wedge t), \tilde{x}(X_{\tau(\omega)\wedge t}^{s,x}(\omega)))\Big)^{\bar{l}} \bigg) d\mathbb{P}(\omega')\nonumber\\
&\quad\leq 
C\bigg\|e^{\int_0^T \abs*{\alpha_u^{s+(\tau(\omega)\wedge t),T}(\omega')}du}\bigg\|_{L^{\frac{p^{s+(\tau(\omega)\wedge t),T}}{p^{s+(\tau(\omega)\wedge t),T} - 1}}(d\mathbb{P}(\omega'))}\nonumber\\
&\qquad \cdot  \int \Big(1+V^{s+(\tau(\omega)\wedge t),T}(\omega',0,\tilde{x}(X_{\tau(\omega)\wedge t}^{s,x}(\omega)))\Big)^{\bar{l}} d\mathbb{P}(\omega')\nonumber\\
&\quad \leq C
 \bigg( 1+ \int V^{s,T}(\omega',\tau(\omega)\wedge t,\tilde{x}(x)) d\mathbb{P}(\omega') \bigg)\nonumber\\
&\quad\leq C
 (1+ V^{s,T}(0,\tilde{x}(x))  )\nonumber\\
&\quad  < \infty,\label{scuff2}
\end{align}
where~$\alpha_{\cdot}^{\cdot,T}$ and~$p^{\cdot,T}$ are the obvious objects associated with~$(1+V^{\cdot,T})^{\bar{l}}$. 
Therefore by dominated convergence, the right-hand side of~\eqref{scuff} converges to the same expression but without~$\mathds{1}_{B_R}$ for~$\mathbb{P}$-a.a.~$\omega$. 
Moreover, by~\eqref{xxgg0} and Corollary~\ref{huddecor0}, 
\begin{align*}
\mathbb{E}[\abs{X_{\tau\wedge t}^{(n+2),s,x''}}] -\abs{x''} &\leq  \mathbb{E}\bigg[\int_0^{\tau\wedge t}\Big\vert f\Big(s + r,X_r^{s,x''}\Big)\Big\vert dr\bigg]\\
& \leq C\int_0^T \mathbb{E}\Big[1+V^{s,T}(r, \tilde{x}(x''))\Big] dr\\
& \leq C \Big(1 + V^{s,T}(0,\tilde{x}(x''))\Big).
\end{align*}
Consequently, together with~\eqref{scuff1},~\eqref{scuff2} 
and dominated convergence (in~$\omega$), it holds that
\begin{align*}
&\int \int (\mathds{1}_{B_R}\bar{g})\bigg(\bar{X}_{T-(s+(\tau(\omega)\wedge t))}^{s+(\tau(\omega)\wedge t), \bar{X}_{\tau(\omega)\wedge t}^{s,y}(\omega)}(\omega')\bigg)d\mathbb{P}(\omega') d\mathbb{P}(\omega) \\
&\quad\rightarrow \int \int \bar{g}\bigg(\bar{X}_{T-(s+(\tau(\omega)\wedge t))}^{s+(\tau(\omega)\wedge t), \bar{X}_{\tau(\omega)\wedge t}^{s,y}(\omega)}(\omega')\bigg)d\mathbb{P}(\omega') d\mathbb{P}(\omega)
\end{align*}
as~$R\rightarrow\infty$. On the other hand, by a similar argument as above, the expectation of the left-hand side of~\eqref{scuff} has the limit
\begin{equation*}
\mathbb{E}[\mathbb{E}[(\mathds{1}_{B_R}\bar{g})(\bar{X}_{T-s}^{s,y}) \vert \mathcal{F}_{\tau\wedge t} ]] = \mathbb{E}[(\mathds{1}_{B_R}\bar{g})(\bar{X}_{T-s}^{s,y}) ] \rightarrow \mathbb{E}[\bar{g}(\bar{X}_{T-s}^{s,y})]
\end{equation*}
as~$R\rightarrow\infty$.
\end{proof}

\end{appendix}


\bibliography{document}


\end{document}